\newcommand{\tcb}[1]{\textcolor{blue}{#1}}
\numberwithin{equation}{section}  
\newtheorem{lemma}{Lemma}[section]
\newtheorem{proposition}{Proposition}[section]
\newtheorem{corollary}{Corollary}[section]
\newtheorem{remark}{Remark}[section]
\newtheorem{assumption}{Assumption}[section]
\newtheorem{theorem}{Theorem}[section]
\newtheorem{definition}{Definition}[section]
\renewcommand{\tcb}[1]{#1}
\def\R{\mathbb{R}}
\def\P{\mathbb{P}}
\def\E{\mathbb{E}}
\def\cS{\mathcal{S}}
\def\cH{\mathcal{H}}
\def\tr{\mathrm{tr}}
\begin{document}
	\title{Estimates of the numerical density for stochastic differential equations with multiplicative noise}
\author[a,b]{Lei Li\thanks{E-mail: leili2010@sjtu.edu.cn}}
\author[a]{Mengchao Wang\thanks{E-mail: mc666@sjtu.edu.cn} }
\author[a]{Yuliang Wang\thanks{E-mail: YuliangWang$\_$math@sjtu.edu.cn}}
\affil[a]{School of Mathematical Sciences, Institute of Natural Sciences, MOE-LSC, Shanghai Jiao Tong University, Shanghai, 200240, P.R.China.}
\affil[b]{Shanghai Artificial Intelligence Laboratory}
\date{}
 
	\maketitle

\begin{abstract}
We investigate the estimates of the density
for the traditional Euler-Maruyama discretization of stochastic differential equations (SDEs) with multiplicative noise.
Our estimates focus on two key aspects: (1) the $L^p$-upper bounds for derivatives of the logarithmic numerical density, (2) the sharp error order of the Euler scheme under the relative entropy (or Kullback-Leibler divergence).  
For the first aspect, we present estimates for the first-order and second-order derivatives of the logarithmic numerical density. The key technique is to adopt the Malliavin calculus to derive expressions of the derivatives of the logarithmic Green's function and to obtain an estimate for the inverse Malliavin matrix.
Moreover, for the relative entropy error, we obtain a bound that is second order in time step, which then naturally leads to first-order error bounds under the total variation distance and Wasserstein distances. Compared with the usual weak error estimate for SDEs, such estimate can give an error bound for the worst case of a family of test functions instead of one test function. 
\end{abstract}

{\it Keywords:
Kullback-Leibler divergence,  Malliavin Calculus, Fokker-Planck equation, Bayes' formula, transport inequality}\\

{AMS subject classification:} {\rm\small 60H35, 65C30, 65L70.}
\section{Introduction}
Stochastic Differential Equations (SDEs) provide a robust mathematical framework for modeling a wide range of dynamic systems influenced by random processes, including open physical systems \cite{van1992stochastic,li2017fractional}, stock prices of financial derivatives \cite{lamberton2011introduction,klebaner2012introduction}, and even in training process of neural networks \cite{ben2022high,hu2019diffusion}.
In this paper, we are concerned with the SDEs with multiplicative noise of the following general form in It\^o's sense \cite{oksendal2003stochastic,karatzas2014brownian} 
\begin{equation}\label{eq:SDE_multiplicative}
	d X_{t} = b ( t, X_t) d t + \sigma (t, X_t ) d W_{t}, 
	\quad t > 0,\quad X_0\sim \rho_0,
\end{equation}
where $W_t $ is a given $m$-dimensional Wiener process (standard Brownian motion) on a filtered probability space
$ (\Omega, \mathscr{ F }, \mathbb{P}, \{\mathscr{ F }_t\}_{t \geq 0})$, and the function 
$ b : \mathbb{R}_{+}\times\mathbb{R}^{d} \to \mathbb{R}^d$
is the drift term,  the function $ \sigma : \mathbb{R}_{+}\times\mathbb{R}^{d} \to \mathbb{R}^{d\times m}$ is the diffusion term. 
The initial value $X_0$ is drawn from a given law $\rho_0$.

Numerical discretization and simulations of  SDEs are important in real applications and have been studied for a long time
\cite{kloeden1992stochastic,Milstein2004Stochastic,Jentzen2011Taylor}. The analysis for behaviors of numerical densities associated with these schemes plays a fundamental role in the further study of numerical SDEs, and it is often treated by the Malliavin calculus 
\cite{hong2024density,cui2022density,chen2023accelerated}.
For example, in \cite{cui2022density}, the authors studied the density of the numerical solutions of the splitting averaged vector field (AVF) scheme for the stochastic Langevin equation using Malliavin calculus. They established the existence of the density and showed that the convergence rate of the numerical density aligns with the strong convergence rate of the numerical scheme.
Unfortunately, to the best of our knowledge, compared with other related results, rigorous estimates for numerical densities remain relatively scarce in the existing literature, particularly in the case of multiplicative noise (i.e. the function $\sigma$ in \eqref{eq:SDE_multiplicative} depends on $X_t$). This gap in the literature motivates us to conduct a state-of-the-art analysis of numerical densities, and the details will be discussed in the first part of this paper later. Specifically, we establish a sequence of estimates for logarithmic numerical densities, mainly via Malliavin calculus. These estimates for numerical densities enable researchers to conduct advanced convergence analysis of numerical schemes for SDEs.
When it comes to convergence analysis of some numerical scheme, typically, one uses the so-called strong convergence and weak convergence to study the numerical methods. The strong convergence focuses on the approximation ability of the stochastic trajectories while the weak convergence focuses on the convergence of the laws \cite{kloeden1992stochastic,Milstein2004Stochastic}.
Traditionally, the weak convergence focuses on the convergence under the weak star topology against smooth functions and is often estimated by fixing an arbitrary test function and thus the convergence error depends on the test function \cite{Milstein2004Stochastic}. Such error estimate is not suitable if we are concerned with statistics for a family of nonsmooth test functions. For example, the Wasserstein-1 distance for two probability measures on $\R^d$ is given for a family of Lipschitz test functions \cite{villani2009optimal} 
\begin{gather}
W_1(\mu, \nu)=\sup_{\varphi\in \mathrm{Lip}_1}\left|\int_{\R^d} \varphi\, d(\mu-\nu)\right|,
\end{gather}
where $\mathrm{Lip}_1$ represents the class of functions that Lipschitz continuous with constant $1$.
Therefore, instead of beginning with a fixed test function, one needs to estimate the error bound for the numerical densities and the densities for the time-continuous SDEs directly, under a certain strong enough metric. The most frequently used metrics include the total variation norm, and the Wasserstein distances \cite{villani2009optimal,santambrogio2015optimal}. Remarkably, using the strong error for trajectories, it is possible to obtain some error bound under these distances. See, for instance, recent convergence results on Euler schemes and splitting schemes for (underdamped) Langevin equations under Wasserstein distances \cite{leimkuhler2024contraction,leimkuhler2023contraction}. However, the problem is that the strong error for the Euler scheme is only half-order with multiplicative noise so the estimate is not sharp. To address this issue, in the second part of this work, we directly study properties of numerical densities rather than the trajectories, and then obtain a sharp error bound. As a brief overview, our analysis is centered around these two main aspects:
\begin{itemize}
	\item \textbf{The $L^p$ upper bounds for derivatives of the logarithmic numerical density:} 
	 The integrability of $p$-th moment of logarithmic numerical density is crucial but challenging in numerical analysis, especially in the multiplicative noise scenario. To address this, we employ Malliavin calculus \cite{nualart2018introduction,bally2010introduction} to derive expressions for the derivatives of the logarithmic Green's function. By making use of Malliavin matrices and various Malliavin derivatives, we obtain integrated versions of the desired estimates for the logarithmic density. The estimate of this kind is crucial for deriving sharp error bounds under both additive \cite{mou2022improved} and multiplicative settings.
	
	\item \textbf{The sharp error order of the Euler scheme under the relative entropy:} We extend the relative entropy estimate from the additive noise case \cite{mou2022improved} to the multiplicative noise case. In detail, we establish an $O(h^2)$
	error bound for the Euler-Maruyama scheme of SDEs with multiplicative noises in terms of the relative entropy. This sharp error estimate is crucial for understanding the convergence of numerical approximations of SDEs, particularly under metrics such as the total variation norm and Wasserstein distances. Existing works typically focus on weak convergence estimates or strong convergence results with less precision when multiplicative noise is involved. Our result improves upon these estimates, providing a sharp error bound for the Euler scheme under relative entropy.
\end{itemize}

Throughout this paper, we consider Euler's method (Euler-Maruyama scheme) defined as follows:
\begin{equation}\label{eq:Euler-method}
X_{ k+1 }^{h} =   X_{ k  }^{h}+ h b ( t_{k}, X_{ k  }^{h}  )+  \sigma( t_{k}, X_{ k  }^{h} ) (W_{t_{k+1}} - W_{t_{k}}),
\end{equation}
where $h>0$ indicates the time step, $t_k=kh$,  $X_k^h$ indicates the corresponding numerical solution at $t_k$, and $W_{t_k}$ is the evaluation of the Wiener process at $t_k$. We will also assume that 
\begin{gather}
X_0^h\sim \rho_0.
\end{gather}
For weak schemes, one may replace $W_{t_{k+1}}-W_{t_k}$ by independent normal variables, irrelevant to the original Wiener process. Since the laws will stay unchanged whether we use $W_{t_{k+1}} - W_{t_{k}}$ or independent normal variables, we will stick to \eqref{eq:Euler-method} in the remaining part of the paper.

Our tool to study \eqref{eq:Euler-method} is the relative entropy (or Kullback-Leibler divergence (KL divergence)), which is a widely used quantity in statistics and data sciences \cite{kullback1951information,mackay2003information}. Besides, the relative entropy has been used to study the 
interacting particle systems \cite{arous1999increasing,jabin2018quantitative,lacker2023hierarchies,huang2024mean}. Recently, it has been used to treat the propagation of chaos for particle systems with singular kernels \cite{jabin2018quantitative}, for Landau type equations \cite{carrillo2024relative,du2024collision}, and for stationary problems \cite{arous1999increasing,li2023solving}. 
When it comes to the numerical approximations of SDEs, the relative entropy has been used to study the error bound 
of Langevin Monte Carlo
\cite{cheng2018convergence,dalalyan2019user}. The Langevin Monte Carlo is based on the Langevin equation, which is an SDE with additive noise. Recently, some sharp error bounds have been obtained for the Euler method of the Langevin equation \cite{mou2022improved} and its random batch version \cite{li2022sharp} using relative entropy. In particular, the bound has been improved to $O(h^2)$ in \cite{mou2022improved,li2022sharp} for the relative entropy where $h$ is the constant time step, and consequently
first order $O(h)$ under the total variation and Wasserstein distances.    There are fewer results about the sharp error estimates of the densities for numerical method of SDEs with multiplicative noises. Some nonasymptotic bounds for the convergence to invariant measure under the Wasserstein distance have been obtained for the Euler scheme with multiplicative noise in \cite{pages2023unadjusted}, and the convergence of simulated annealing type algorithms have been studied in \cite{bras2024convergence,bras2023convergence} for multiplicative noise. 
Recently,  some sharp error bounds under the total variation and Wasserstein-1 distances have been obtained in \cite{zhangwang2024total} for the multiplicative noise and non-globally Lipschitz coefficients. 
The sharp estimates under relative entropy and Wasserstein-2 distance are still unavailable to the best of our knowledge.

In this paper, we build upon these results and derive some more careful estimates for the Euler-Maruyama scheme with multiplicative noise.  By leveraging Malliavin calculus and carefully estimating the inverse Malliavin matrix, we provide upper bounds for the derivatives of the logarithmic numerical density. These estimates are crucial for establishing the sharp convergence result under the relative entropy, where we show that the error bound for the relative entropy is 
$O(h^2)$, improving upon most previous results.



The rest of the paper is organized as follows. In Section \ref{sec:prelim}, we give a brief introduction to the relative entropy and Malliavin calculus and introduce some necessary notations. In Section \ref{sec:assumption}, we state the main assumptions used throughout this paper. 
Section \ref{sec:nablalog} develops the foundational estimates of numerical density that constitute the first core achievement of this study. In direct theoretical progression, Section \ref{sec:mainresult} articulates an equally significant breakthrough by establishing the relative entropy bound through systematically employing the density estimates, thereby completing the dual theoretical pillars of this work.
In particular, we apply the Malliavin calculus to find the expressions for the gradients of the logarithmic Green's function. By an estimate of the inverse Malliavin matrix, we show that the $x$ derivative and $y$ derivative can cancel in the leading order which allows us to get the estimates of the numerical density. 
\tcb{Section \ref{sec:experiments} numerically validates the second-order convergence in relative entropy for the Euler-Maruyama scheme (Theorem~\ref{thm:errentropy}) through two benchmark cases: geometric Brownian motion and a $2$-dimensional Ginzburg-Landau model.}
In Appendix \ref{app:pointwise}, we provide a pointwise estimate for 1D time continuous SDEs, also mainly via the technique of Malliavin calculus.

\section{Notations and preliminaries}\label{sec:prelim}

In this section, we collect some basic notations and tools for the rest of the paper.
In particular, we first give an introduction to the relative entropy and then to the basics of Malliavin calculus.
Last, we gather some notations that will be used in this paper.

\subsection{The relative entropy}\label{subsec:relativeentropy}

The relative entropy (KL divergence) between two probability measures $\mu$ and $\nu$ on Polish space $E$ is defined as
\begin{gather}
\cH(\mu\mid \nu) = \left\{
\begin{aligned}
  &\int_E \log \frac{\mathrm{d} \mu}{\mathrm{d}\nu} \mathrm{d}\mu, & \text{if} ~ \mu \ll \nu,\\
 & \infty, & \text{else},
\end{aligned}\right.
\end{gather}
where $\frac{\mathrm{d} \mu}{\mathrm{d}\nu}$ denotes the Radon-Nikodym derivative of $\mu$ with respect to $\nu$. 

Note that the relative entropy is non-negative by Jensen's inequality and achieves zero only if $\mu= \nu$. Moreover, it is a convex functional with respect to either argument with respect to the weak convergence topology of the probability measures. The relative entropy has a variational characterization
\begin{gather}
\cH(\mu \mid \nu)=\sup_{\varphi\in C_b(E)}\left(\int_E\varphi d\mu-\int_E e^{\varphi} d\nu\right)+1,
\end{gather}
where $C_b(E)$ indicates the class of bounded continuous functions.

The Pinsker's inequality \cite{pinsker1964information,bolley2005weighted} claims that the total variation norm (and some weighted versions) can be controlled by the square root of the relative entropy
\begin{gather}
TV(\mu, \nu)\le \sqrt{2\cH(\mu\mid \nu)}.
\end{gather}
This inequality holds without any restrictions on the measures involved.

Another class of important functional inequalities is the transportation inequalities \cite{talagrand1996transportation,bobkov2001hypercontractivity}. The Talagrand transportation inequality gives the control for $W_2(\mu,\nu)$ if $\nu$ satisfies a log-Sobolev inequality with constant $\lambda$ \cite{talagrand1991new,otto2000generalization}:
\begin{equation}\label{eq:talaW2}
W_2(\mu,\nu) \leq \sqrt{\frac{2}{\lambda} \cH(\mu |\nu)}.
\end{equation}
Here, we recall the log-Sobolev inequality. Consider the entropy of the square of a Lipschitz function $f$ with respect to $\nu$, defined as:
\begin{equation}
	\operatorname{Ent}_{\tcb{\nu}}(f^2) = \int_{\R^d} f^2 \log(f^2) d\tcb{\nu} - \left( \int_{\R^d} f^2 d\tcb{\nu} \right) \log \left( \int_{\R^d} f^2 d\tcb{\nu} \right).
\end{equation}
We say $\nu$ satisfies the log-Sobolev inequality with constant $\lambda$ if for all such $f$, one has 
\begin{equation}
	\operatorname{Ent}_{\nu}(f^2) \leq \frac{1}{2\lambda} \int_{\R^d} |\nabla f|^2 d\nu.
\end{equation}

Moreover, using the weighted Csisz\'ar-Kullback-Pinsker inequality \cite{bolley2005weighted}, one has
\begin{equation}
W_1(\mu,\nu) \leq C_{\nu}\sqrt{ \cH(\mu |\nu)},
\end{equation}
for any $\nu$ satisfying the following tail behavior:
\begin{equation}\label{eq:W1condition}
C_\nu:=2 \inf _{\alpha>0}\left(\frac{1}{2 \alpha}\left(1+\log \int_{\R^d} e^{\alpha|x|^2} d \nu(x)\right)\right)^{\frac{1}{2}}<+\infty.
\end{equation}

\subsection{Basics of Malliavin calculus}\label{subsec:malliavin}

Let us introduce some notations and basic definitions of Malliavin Calculus \cite{nualart2018introduction}. 
Consider the given $m$-dimensional Brownian motion $W_t$ and the associated probability space $X:=(\Omega, \hat{\mathscr{F}}, \P, \hat{\mathscr{F}}_t)$, where $\hat{\mathscr{F}}_t$ is the filtration generated by $W_t$ and $\hat{\mathscr{F}}$ is the $\sigma$-algebra generated by $W_t$. (Note that $\hat{\mathscr{F}}$ and $\hat{\mathscr{F}}_t$ could be smaller than the given $\mathscr{F}$ and $\mathscr{F}_t$ because the latter could contain other information like the randomness in the initial values.)

One would call the distributional derivative of the Brownian motion $\eta_t=\dot{W}_t$ the white noise.
The Malliavin derivative $D: L_{\mathrm{loc}}^1(\Omega; \R)\to L_{\mathrm{loc}}^1( \Omega; L^2([0,\infty), \R^m))$ acting on a measurable $F: \Omega \to \R$ is understood as the variation of $F$ with respect to white noise. The output is a measurable random variable taking values in $L^2([0,\infty); \R^m)$. Here, $F$ is measurable with respect to $\hat{\mathscr{F}}$ (not the given $\mathscr{F}$), meaning that it is in fact a functional of $W_t$. Let us denote $H=L^2([0,\infty); \R^m)$ for the convenience.
Letting $h\in H$, one denotes
\begin{gather}
W(h):=\int_0^{\infty}h_s dW_s,
\end{gather}
the It\^o's integral of $h$. To rigorously define $D$, one may start with a class of special random variables
\begin{gather}\label{eq:cSdef}
F\in \cS:=\{f\left(W\left(h_1\right), \ldots, W\left(h_n\right)\right): f\in C_p^{\infty}\left(\mathbb{R}^{n}\right), n\ge 1, h_i\in H, 1\le i\le n \},
\end{gather}
where $C_p^{\infty}\left(\mathbb{R}^n\right)$ is the set of all infinitely differentiable functions $f: \mathbb{R}^n \rightarrow \mathbb{R}$ such that all the partial derivatives have at most polynomial growth. Note that the set $\mathcal{S}$ contains such variables for all possible $n\ge 1$.

\begin{definition}
Consider a \tcb{random} variable $F=f\left(W\left(h_1\right), \ldots, W\left(h_n\right)\right)\in \mathcal{S}$ for some $n\ge 1$. For any $t\ge 0$, the Malliavin derivative at $t$ is defined as
\begin{gather}
D_t F=\sum_{j=1}^n \frac{\partial f}{\partial x_j}\left(W\left(h_1\right), \ldots, W\left(h_n\right)\right) h_j(t) .
\end{gather}
Moreover, $DF:=(D_tF)_{t\ge 0}$ is the process in $L^2( \Omega; H)$.
\end{definition}
For general $F\in L^p(\Omega; \R)$ ($p\ge 1$), the Malliavin derivative can be defined by approximating $F$ using the variables in $\cS$
through a density argument. The set of $F\in L^p$ such that $DF\in L^p(\Omega; H)$ is denoted by $\mathbb{D}^{1,p}$. \tcb{The definitions here work also for the case if we only consider the integrals up to time $T$, by considering $h_i\in C_c([0, T])$ (also note that $C_c([0,T])$ is dense in $L^2([0,T])$).}  The Malliavin derivatives can extend to various Sobolev spaces and see \cite{nualart2018introduction} for more details. 
Moreover, one can similarly define repeated Malliavin derivatives like $DDF:=D(DF)$ etc.  Below, we will mostly focus on related concepts for $DF$ with $p=2$. The general case is just similar and one may refer to \cite{nualart2018introduction}.

Using the duality, one can define the following divergence operator $\delta$.
\begin{definition}
	The domain of the divergence operator $\mathrm{Dom}~\delta$ in $L^2(\Omega)$ is the set of processes $u \in L^2\left(\Omega; H\right)$ such that there exists $\delta(u) \in L^2(\Omega)$ satisfying the duality relationship
\begin{gather}\label{eq:byparts}
\E\left(\langle D F, u\rangle_{H}\right)=\E(F \delta(u) ),
\end{gather}
	for any $F \in \mathbb{D}^{1,2}$. Here, $\langle f, g\rangle_H=\int_0^{\infty}f\cdot gdt$ is the inner product in $L^2([0,\infty); \R^m)$.
\end{definition}
The definition \eqref{eq:byparts} is in fact an integration by parts formula.
\tcb{Recalling $\mathcal{S}$ defined in \eqref{eq:cSdef}, we further define the class $\cS_H$ by:}
\begin{equation}
    \tcb{\mathcal{S}_H := \left\{\sum_{j=1}^n F_j h_j(t) : F_j \in \mathcal{S}, h_j \in H \right\}.}
\end{equation}
The divergence operator $\delta$ has many important properties and we list some here.
\begin{itemize}
\item The divergence operator $\delta$ is a closed linear operator  in the sense that if the sequence $u_n \in \mathcal{S}_H$ satisfies
\[
u_n \xrightarrow{L^2(\Omega ; H)} u \quad \text { and } \delta\left(u_n\right) \xrightarrow{L^2(\Omega)} G,
\]
as $n \rightarrow \infty$, then $u$  belongs to  $\mathrm{Dom}~\delta$ and $\delta(u)=G$.

\item $\tcb{\mathbb{E}}(\delta(u))=0$. In particular, any $L^2$ process $u$ that is adapted to the filtration $\hat{\mathscr{F}}_t$ is in the domain of $\delta$ and it holds that
\begin{gather}\label{eq:skorokhod}
\delta(u)=\int_0^{\infty} u_s dW_s.
\end{gather}
\end{itemize}
The property \eqref{eq:skorokhod} and the integration by parts formula \eqref{eq:byparts} play the basic roles for Malliavin calculus in applications, when studying random variables that are functionals of Brownian motions.
If $u$ is not adapted, $\delta(u)$ is called the Skorohod integral of $u$.

The following are some basic properties in Malliavin calculus which we will use frequently later (the generalization to $F\in L^2(\Omega; \R^d)$ is just similar). \tcb{We refer readers to \cite[Section 3]{nualart2018introduction} for more details.}
\begin{proposition}
\begin{enumerate}
\item(Chain rule for Malliavin derivative)	Let $\varphi: \mathbb{R} \rightarrow \mathbb{R}$ be a continuous differentiable function such that $\left|  \varphi'(x)\right| \leq C\left(1+|x|^\alpha\right)$ for some $\alpha \geq 0$. Let $F \in \mathbb{D}^{1, p}$ for some $p \geq \alpha+1$. Then, $\varphi(F)$ belongs to $\mathbb{D}^{1, q}$, where $q=p /(\alpha+1)$, and
\begin{gather}
D(\varphi(F))=  \varphi'(F) D F.
\end{gather}

\item (Product rule for Malliavin derivative) Let $F_1$, $F_2 \in \mathbb{D}^{1,2}$ such that $F_1F_2\in L^2(\Omega)$ and $F_1DF_2$, $F_2DF_1\in L^2(\Omega; H)$, then $F_1F_2 \in \mathbb{D}^{1,2}$ and 
\begin{equation}
    D(F_1F_2) = F_1DF_2 + F_2 DF_1.
\end{equation}
\item (Product rule for divergence)	Let $G \in \mathbb{D}^{1,2}$ be a real-valued random variable such that $\left.G u \in L^2\left(\Omega, \tcb{L^2([0, \infty))}\right)\right)$, then
\begin{gather}\label{eq:divergenceproduct}
\delta(G u)=G \delta(u)- \left\langle D G, u \right\rangle _{H},
\end{gather}
where we suppose that the right-hand side is integrable.
\end{enumerate}
\end{proposition}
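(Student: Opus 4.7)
The plan is to establish each of the three identities first on the smooth cylindrical class $\mathcal{S}$ defined in \eqref{eq:cSdef}, where the Malliavin derivative reduces to finite-dimensional partial derivatives of the defining function, and then to extend to the stated Sobolev-type spaces by an approximation argument that exploits the closability of the operators $D$ and $\delta$. This is the standard template for proving Malliavin-calculus identities.

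For the chain rule, I would first pick $F = f(W(h_1), \ldots, W(h_n)) \in \mathcal{S}$. If $\varphi$ were smooth, then $\varphi \circ f \in C_p^{\infty}$ and the identity $D(\varphi(F)) = \varphi'(F) DF$ is immediate by definition of $D$ on $\mathcal{S}$. To handle a merely $C^1$ function $\varphi$ with $|\varphi'(x)| \le C(1+|x|^{\alpha})$, I would mollify $\varphi$ to obtain $\varphi_{\epsilon} \in C_p^{\infty}$, apply the identity to $\varphi_{\epsilon}(F)$, and pass $\epsilon \to 0$ using H\"older's inequality with exponents $(\alpha+1)/\alpha$ and $\alpha+1$ to control $\varphi_{\epsilon}'(F) DF$ in $L^q(\Omega; H)$. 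Closability of $D$ then transfers the identity to the limit, and a final density argument approximating $F \in \mathbb{D}^{1,p}$ by cylindrical variables completes the extension. The product rule for $D$ follows the same template: on $\mathcal{S}$ it is the finite-dimensional Leibniz rule, and for $F_1, F_2 \in \mathbb{D}^{1,2}$ the assumed $L^2$-integrability of the three products $F_1F_2$, $F_1 DF_2$, $F_2 DF_1$ lets Cauchy-Schwarz pass the identity to the limit along smooth approximations.

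For the product rule for $\delta$, my strategy is to verify that $G\delta(u) - \langle DG, u\rangle_{H}$ satisfies the duality characterization of $\delta(Gu)$. Given any test $F \in \mathbb{D}^{1,2}$, the already-established product rule for $D$ gives $\langle DF, Gu\rangle_{H} = \langle D(FG), u\rangle_{H} - F\langle DG, u\rangle_{H}$. Taking expectations and applying the duality relation \eqref{eq:byparts} to $FG \in \mathbb{D}^{1,2}$ yields
\begin{gather*}
\E\bigl[\langle DF, Gu\rangle_{H}\bigr] = \E\bigl[FG\,\delta(u)\bigr] - \E\bigl[F\langle DG, u\rangle_{H}\bigr] = \E\bigl[F\bigl(G\delta(u) - \langle DG, u\rangle_{H}\bigr)\bigr],
\end{gather*}
which by the defining property of $\delta$ shows $Gu \in \mathrm{Dom}\,\delta$ and produces the claimed formula \eqref{eq:divergenceproduct}. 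Note that this step essentially feeds the product rule for $D$ into the divergence duality, so the order in which the three parts are proved matters.

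The main obstacle is the integrability bookkeeping rather than any deep idea. For the chain rule, one must track the exponent loss $q = p/(\alpha+1)$ forced by the polynomial growth of $\varphi'$, and ensure the mollified sequence is dominated in the correct $L^q$-sense so that closability of $D$ applies. For the divergence product rule, the delicate points are confirming $FG \in \mathbb{D}^{1,2}$ (so that \eqref{eq:byparts} is legal for that variable) and checking that $Gu \in L^2(\Omega; H)$ together with integrability of the right-hand side of \eqref{eq:divergenceproduct}, so that every expectation above is finite. All three arguments ultimately rest on the density of $\mathcal{S}$ in $\mathbb{D}^{1,p}$ and on the closability of $D$ and $\delta$ that underpin the framework of \cite{nualart2018introduction}.
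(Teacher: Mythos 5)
Your proposal is correct and follows exactly the standard route: the paper itself does not prove this proposition but cites \cite[Section 3]{nualart2018introduction}, and the arguments there are precisely what you describe --- establish each identity on the cylindrical class $\mathcal{S}$, extend by density and closability of $D$ (with the H\"older bookkeeping giving $q=p/(\alpha+1)$), and obtain the divergence product rule by verifying the duality characterization \eqref{eq:byparts} after feeding in the product rule for $D$. The one point worth making explicit in a full write-up is that in the duality step the test variable $F$ should first be taken in a dense subclass (e.g.\ smooth cylindrical functionals with bounded derivatives) so that $FG\in\mathbb{D}^{1,2}$ is automatic, before passing to general $F\in\mathbb{D}^{1,2}$; you flag this as a delicate point, which is the right instinct.
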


\subsection{Some other notations}

We let $| x |$ denote the Euclidean norm of a vector $x \in \mathbb{R}$. For a matrix $M$ we let $| M |$ denote its spectral norm. 
For a tensor $T$ of order $r\ge 3$, $|T|:=\sqrt{\sum_{\alpha_1 \cdots\alpha_r} T_{\alpha_1,\cdots, \alpha_r}^2}$.
For a multi-index $\alpha=\left(\alpha_1, \ldots, \alpha_d\right) \in\mathbb{N}^d$, we denote $|\alpha|:=\sum_{i=1}^d \alpha_i$. Also,
\begin{gather}
\partial^{\alpha}:=\prod_{i=1}^d\frac{\partial^{\alpha_i}}{\partial x_i^{\alpha_i}}.
\end{gather}
\tcb{For two matrices $A$ and $B$, the notation $A \succeq B$ means $B-A$ is semi positive definite. Moreover, for two vectors $x$, $y \in \mathbb{R}^d$, $x \otimes y$ is a $d \times d$ matrix with $ij$-th element being $x_i y_j$.}

\section{Assumptions}\label{sec:assumption}

Denote $\Lambda$ the diffusion matrix
\begin{gather}
\Lambda (t, x) = \sigma (t, x) \sigma^{T} (t, x).
\end{gather}
Throughout this paper, we assume the following conditions:

\begin{assumption}\label{ass:condition-drift-diffusion}

\begin{enumerate}[(a)]
\item The drift coefficient $ b \in C^3
( \R_+ \times \mathbb{R}^{d} ; \mathbb{R}^d)$ satisfies Lipschitz continuity and smoothness in the sense that
\begin{equation}\label{eq:b_timelip}
\begin{aligned}
&|b(t,x_1) - b(t,x_2)| \leq L |x_1-x_2|,\quad \forall t\geq 0,\\
	&|b(t_1,x) - b(t_2,x)| \leq L |t_2-t_1|(1 + |x|^q), \\
& \sup_{t\geq 0}| \partial_x^{\alpha} b(t,x)| \leq C(1 + |x|^{q}), \quad 
     2\le |\alpha|\le 3,
\end{aligned}
\end{equation}
for some constant $L, C > 0 $ and $ q \ge 0$.


\item The diffusion coefficient $\sigma\in C^3(\R_+\times \R^d; \R^{d\times m})$ satisfies Lipschitz continuity and smoothness in the sense that
\begin{equation}\label{eq:sigmaspace}
\begin{aligned}
    &|\sigma(t,x_1) - \sigma(t,x_2)| \leq L |x_1 - x_2|,\quad \forall t \geq 0, \\
    & \sup_{t\geq 0}|\partial_x^{\alpha}\sigma(t, x)|<C(1 + |x|^q), \quad 
     2\le |\alpha|\le 3,
\end{aligned}
\end{equation}
for some constant $L, C > 0 $ and $ q \ge 0$.
Moreover, there exists $\kappa>0$ such that  
\begin{gather}
 \Lambda(t, x) \succeq \kappa I, \quad \forall (t, x)\in \R_+\times \R^{d},
\end{gather}
\begin{equation}\label{eq:Lam_timelip}
	|\Lambda(t_1,x) - \Lambda(t_2,x)| +|\nabla_{x}\cdot \Lambda(t_1,x) - \nabla_{x}\cdot \Lambda (t_2,x)| \leq L |t_2-t_1|(1 + |x|^q).
\end{equation}
\end{enumerate}
\end{assumption}

Note that we are not assuming the boundedness of $b$. Since $b(t, 0)$ is bounded on $[0, T]$ for any $T>0$, $b(t, x)$ therefore has linear growth on finite time interval. Hence, the moment of $b(t, X)$ is reduced to the moment of $X$. Moreover, we are not assuming any growth conditions for $\partial^\alpha_x \sigma$ for $|\alpha| = 1$, because the Lipschitz condition in \eqref{eq:sigmaspace} already indicates the linear growth of the first order derivative.
\tcb{A typical example of $b$ and $\sigma$ satisfying Assumption \ref{ass:condition-drift-diffusion} above is: $b(t,x) = -x$, $\sigma(t,x) = \sqrt{1 + x^2}$ for $x \in \mathbb{R}$.}
\tcb{Also note that uniform ellipticity condition ($\Lambda(t, x) > \kappa I$, $\kappa > 0$) is necessary within our proof framework for bounding the inverse of the Malliavin matrix in Section \ref{subsec:inverseMmatrix} below.}
Below, we give some assumptions on the initial density. 
\begin{assumption}\label{ass:Uniform bound}
The initial value $X_0$ has a density function $\rho_{0}(x)$ that satisfies the following conditions
\begin{enumerate}[(a)]
\item For $p_1$ sufficiently large ($p_1 \ge C (1 + q) $ for some universal constant $ C $ and $ q $ in Assumption \ref{ass:condition-drift-diffusion}), the initial value has finite $p_1$-moment
\begin{gather}
\E|X_0|^{p_1} =\int_{\R^d} |x|^{p_1} \rho_0(x)\,dx<\infty.
\end{gather}

\item For some $ p_2 > 4$ and $p_3>2$, the derivatives of the logarithmic density are integrable with respect to $\rho_0$ in the following sense
\begin{multline}
\mathbb{E} \Big[ \big| \nabla_{x} \log \rho_0(X_{0} )
\big|^{ p_2} \Big] +\mathbb{E} \Big[ \big| \nabla_{x}^{2} \log \rho_0(X_{0} )\big|^{ p_3} \Big] \\
=\int_{\R^d}\big| \nabla_{x} \log \rho_0(x)\big|^{ p_2}
\rho_0(x)\,dx+\int_{\R^d}\big| \nabla_{x}^2 \log \rho_0(x)\big|^{p_3}\rho_0(x)\,dx<\infty.
\end{multline}
\end{enumerate}
\end{assumption}

With the assumed conditions, the following result regarding the moments is standard (see \cite[\tcb{Lemma 6.1 or Lemma 7.1}]{mao2007stochastic} for a similar proof), and we omit the proof.
\begin{lemma}\label{lem:bound-moment}
Suppose that Assumption  \ref{ass:condition-drift-diffusion}. Then for any $q>1$, there is a constant $C(q, T)>0$ that is independent of $h$ such that the interpolated process \eqref{eq:continuous-Euler-method} satisfies
\begin{gather}
 \mathbb{E} \sup_{t\in [0, T]} \left| X_t^{h} \right|^q  \leq  C(q,T)\left(1+\mathbb{E}\left|X_0\right|^q \right).
 \end{gather}
 Consequently, for those $p$ values in Assumption \ref{ass:Uniform bound}  \tcb{(i.e. $p \geq \max (p_1, p_2, p_3)$)}, 
 $$ \mathbb{E} \sup_{t\in [0, T]}
\left| X_t^{h} \right|^p\le C(p, T).$$
\end{lemma}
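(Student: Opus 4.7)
The plan is to follow the classical Mao-style moment estimate for Euler-Maruyama schemes. First I would express the interpolated process in integral form: for $t \in [t_k, t_{k+1})$,
\begin{equation*}
X_t^h = X_0 + \int_0^t b(\eta_h(s), X_{\eta_h(s)}^h)\,ds + \int_0^t \sigma(\eta_h(s), X_{\eta_h(s)}^h)\,dW_s,
\end{equation*}
where $\eta_h(s) := t_k$ for $s \in [t_k, t_{k+1})$. Using the spatial Lipschitz bounds in \eqref{eq:b_timelip} and \eqref{eq:sigmaspace}, together with the fact that $b(0,0)$ and $\sigma(0,0)$ are finite and the time-Lipschitz estimate at $x = 0$, I would first derive linear growth: $|b(t,x)| + |\sigma(t,x)| \le C_T(1 + |x|)$ uniformly on $[0,T] \times \R^d$.

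Next, I would apply the elementary inequality $(a+b+c)^q \le 3^{q-1}(a^q + b^q + c^q)$ and the supremum over $[0,t]$:
\begin{equation*}
\E\sup_{s\le t}|X_s^h|^q \le 3^{q-1}\Big(\E|X_0|^q + \E\sup_{s\le t}\Big|\int_0^s b\,dr\Big|^q + \E\sup_{s\le t}\Big|\int_0^s \sigma\,dW_r\Big|^q\Big).
\end{equation*}
The deterministic drift integral is controlled by Hölder's inequality giving a bound of order $T^{q-1}\int_0^t (1 + \E|X_{\eta_h(r)}^h|^q)\,dr$. The stochastic integral is handled by the Burkholder-Davis-Gundy inequality, producing $C\,\E\bigl(\int_0^t|\sigma|^2\,dr\bigr)^{q/2}$, which combined with Hölder yields $C T^{q/2-1}\int_0^t(1 + \E|X_{\eta_h(r)}^h|^q)\,dr$. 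Since $\eta_h(r) \le r$, one has $\E|X_{\eta_h(r)}^h|^q \le \E\sup_{u\le r}|X_u^h|^q$, so both contributions consolidate into the same form.

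Setting $\Phi(t) := \E\sup_{s \le t}|X_s^h|^q$, I would obtain
\begin{equation*}
\Phi(t) \le C(q,T)\Big(1 + \E|X_0|^q + \int_0^t \Phi(r)\,dr\Big),
\end{equation*}
and then Grönwall's inequality yields the bound $\Phi(T) \le C(q,T)(1 + \E|X_0|^q)$ with a constant independent of $h$, proving the first claim. The second assertion $\E\sup_{t\le T}|X_t^h|^p < \infty$ for $p \ge \max(p_1, p_2, p_3)$ is then immediate, since Assumption \ref{ass:Uniform bound}(a) guarantees $\E|X_0|^{p_1} < \infty$ and any larger $p$ simply enlarges the constant $C(p,T)$.

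There is no genuine obstacle here; the only delicate point is the bookkeeping to ensure the constants remain independent of the step size $h$ when passing from $\E|X_{\eta_h(r)}^h|^q$ to $\Phi(r)$. One must also note (before invoking finiteness) a standard a priori truncation argument (stopping times $\tau_N = \inf\{t: |X_t^h| \ge N\}$) may be inserted to justify the moments being finite before Grönwall is applied, and then $N \to \infty$ is taken by monotone convergence. This is the reason the authors invoke Mao's book and omit the details.
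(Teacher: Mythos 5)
Your argument is correct and is exactly the standard Mao-type estimate that the paper points to when it cites \cite{mao2007stochastic} and omits the proof: integral form of \eqref{eq:continuous-Euler-method}, linear growth of the coefficients on $[0,T]$, H\"older plus Burkholder--Davis--Gundy, and Gr\"onwall, with the stopping-time truncation correctly noted as the step needed to make Gr\"onwall legitimate. Two small caveats. First, the H\"older step $\bigl(\int_0^t g\,dr\bigr)^{q/2}\le t^{q/2-1}\int_0^t g^{q/2}\,dr$ used after BDG requires $q\ge 2$; for $1<q<2$ you need a separate (standard) patch, e.g.\ bounding $\bigl(\int_0^t|\sigma|^2dr\bigr)^{q/2}$ by $t^{q/2}\sup_{r\le t}|X^h_{\eta_h(r)}|^q$ plus a constant and absorbing the resulting term over short subintervals, or simply deducing the $q<2$ case from $q=2$ by Jensen (at the price of a constant involving $\E|X_0|^2$). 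Since the lemma is only ever invoked for large $p$, this is cosmetic. Second, your closing remark that ``any larger $p$ simply enlarges the constant'' is backwards: the first part of the lemma bounds $\E\sup_t|X_t^h|^p$ by $C(1+\E|X_0|^p)$, so increasing $p$ \emph{demands more} initial integrability, not less. The deduction actually runs in the other direction --- for $p\le p_1$ one has $\E|X_0|^p\le 1+\E|X_0|^{p_1}<\infty$ by Assumption \ref{ass:Uniform bound}(a) --- and the paper's parenthetical ``$p\ge\max(p_1,p_2,p_3)$'' should be read with $p_1$ taken large enough to dominate all exponents actually used.
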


\section{Estimates of the logarithmic numerical density}\label{sec:nablalog}

In this section, we explore the $L^p$ upper bounds for derivatives of the logarithmic numerical density, leveraging tools from Malliavin calculus. By analyzing Malliavin matrices and derivatives, we derive integrated estimates for the logarithmic Green's function. These results are critical for establishing sharp error bounds in both additive and multiplicative noise settings.


For the discrete scheme \eqref{eq:Euler-method}, it is hard to use the technique of the differential equations, so we consider the following time continuous interpolation over the interval $t \in[ t_{k} , t_{k+1}]$ via
\begin{equation}\label{eq:continuous-Euler-method}
X^{h}_t:=X_{ t_{k} }^{h} 
+\int_{t_{k} }^{t} b\left(t_{k}, X_{ t_{k } }^{h} \right) d s +
\int_{ t_{k} }^t \sigma ( t_{k}, X_{ t_{k} }^{h} )
 d W_s .	
\end{equation}
Let us denote the densities (Radon-Nikodym derivative of the laws with respect to the Lebesgue measure) of $X_t$ for \eqref{eq:SDE_multiplicative} and $X^{h}_t$ for \eqref{eq:continuous-Euler-method} by $\rho_t$ and $\hat{\rho}_t$,  respectively. 
As we have mentioned in the introduction, using the strong error of the schemes, it is possible to obtain the half-order error bound
$O(\sqrt{h})$ for the densities, which is not sharp. Alternatively, in order to improve the error bound, in what follows, we turn our attention to the time evolution of the relative entropy $\cH\left( \hat{\rho}_t | \rho_t\right)$.

\subsection{Setup and the main estimates}

Consider the Green's function (transition probability) 
\begin{gather}
p(t,x; {a},y):=p(X_t^h=x | X_{{a}}^h=y)
\end{gather}
 of the time-continuous interpolation of the Euler method \eqref{eq:continuous-Euler-method} for $0\le {a}\le t$ where ${a}$ is a grid point ${a}=t_m$. This is in fact the law of the solution to the following equation 
\begin{equation}\label{eq:Eulercont_green}
	Y^{h}_t = y +\int_{{a}}^t b(\lfloor s \rfloor,  Y^{h}_{ \lfloor s \rfloor } ) d s
	+\int_{{a}}^t \sigma (\lfloor s \rfloor, Y^{h}_{\lfloor s \rfloor }) d W_s ,
\end{equation}
where we have denoted the following
\begin{gather}
	\lfloor s \rfloor := h [s/h],
\end{gather}
and $[x]$ indicates the largest integer no greater than $x$. Here, we will consider general two time points which could be in different time intervals. For convenience, we introduce the stochastic Jacobian and the stochastic Hessian
\begin{gather}
(J_{t}^h)_{ij}=\partial_{y_i}(Y_t^h)^j, \quad (H_t^h)_{ijk}=\frac{\partial^2}{\partial y_i \partial y_j}(Y_t^h)^k.
\end{gather}

With the Green's function, we find that
\begin{gather}\label{eq:densityformula}
\hat{\rho}_t(x)=\int_{\R^d} p(t, x ; 0, y) \rho_{0}(y)\,dy.
\end{gather}
Comparing to the heat equation, one may expect that $\nabla_x p(t, x ; 0, y)
\sim \frac{x-y}{t} p(t, x; 0, y)$. The singularity of $t$ near $t_0=0$ is large so direct estimation of $\nabla_x\hat{\rho}_t(x)$ is troublesome using only the integrability of $\rho_0$.

The intuition, as mentioned above, is that $\nabla_x\sim -\nabla_y$ for the Green's function. The derivative on $y$ then can be
moved onto the initial density. Hence, using the smoothness of the initial density, one can do the estimates.
This in fact corresponds to the averaging effect of the diffusion processes. 
With this understanding, we introduce the following:
\begin{gather}\label{eq:firstordercommn}
R(t,x; {a},y) := (\nabla_x + \nabla_y)\log p(t,x; {a},y),
\end{gather}
and
\begin{gather}\label{eq:secondordercommn}
R_2(t,x; {a},y) := (\nabla_x + \nabla_y)^2\log p(t,x; {a},y),
\end{gather}
\tcb{respectively.} Here, $R$ is a vector-valued function and $R_2$ is a matrix-valued function.
The intuition is that these two functions are well-behaved near $t=0$ using which we derive the main results in this section.

For the original continuous SDE in 1-dimension, we are able to obtain the pointwise estimate of $R$ and $R_2$
(see  Appendix \ref{app:pointwise}).
However, for numerical schemes, the pointwise estimate of $R$ and $R_2$ seems challenging
(see the final remark in Appendix \ref{app:pointwise}).
Instead, we prove the following integrated versions, which are already enough for many applications including our sharp convergence analysis in Section \ref{sec:mainresult} below. To simplify the notation, we denote the polynomial 
\begin{gather}\label{eq:Pk}
\mathscr{P}_k(y) := 1 + |y|^k
\end{gather} for $k > 0$ and $y \in \mathbb{R}^d$.

\begin{theorem}\label{thm:RandR2}
Suppose Assumption \ref{ass:condition-drift-diffusion} holds. Let $R$ and $R_2$ be defined as in  \eqref{eq:firstordercommn}
and \eqref{eq:secondordercommn}. Then, for any $p\ge 1$ and $T>0$, there exist  positive constants $C_1 = C(p,T)$ and $\nu = c(1+pq)$ (for some universal constant $ c $ and $ q $ in Assumption \ref{ass:condition-drift-diffusion}) such that for all $h$, $0\le {a} \le t\le T$ with ${a}=t_m$ being a grid point and any $y$, it holds that 
\begin{equation}
\mathbb{E}\left[ |R(t,Y^h_t; {a},y)|^p  \right]+\mathbb{E}\left[ |R_2(t,Y^h_t; {a},y)|^p  \right] \leq  C_1 \mathscr{P}_{\nu}(y).
\end{equation}
\end{theorem}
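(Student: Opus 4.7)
The plan is to exploit the heuristic that the combined operator $\nabla_x+\nabla_y$ annihilates the translation-invariant part of the Green's function: if $b$ and $\sigma$ were constants, $p(t,x;{a},y)$ would depend only on $x-y$ and both $R$ and $R_2$ would vanish. For the actual Euler scheme I will therefore represent $R$ and $R_2$ as conditional expectations of Skorokhod integrals via Malliavin calculus, where the crucial cancellation comes from $J_t^h$ being close to the identity. The proof for $R_2$ then proceeds by iterating the construction used for $R$.

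\textbf{Step 1: representation of $R$.} Starting from $\E[\phi(Y_t^h(y))]=\int\phi(x)p(t,x;{a},y)\,dx$, differentiating in $x$ and in $y$ and adding gives
\begin{equation*}
\int \phi(x)\,(\nabla_x+\nabla_y)p(t,x;{a},y)\,dx = \E\big[\nabla\phi(Y_t^h)\cdot(J_t^h-I)\big],
\end{equation*}
since the $I$-part of $J_t^h$ cancels the $x$-derivative term. Using the chain rule $D_s\phi(Y_t^h)=\nabla\phi(Y_t^h)^{T}D_s Y_t^h$, I choose
\begin{equation*}
V^{(j)}_s:=(D_s Y_t^h)^{T}\gamma_t^{-1}(J_t^h-I)e_j,\qquad \gamma_t:=\int_{{a}}^{t}(D_s Y_t^h)(D_s Y_t^h)^{T}\,ds,
\end{equation*}
so that $\langle DY_t^{h,i},V^{(j)}\rangle_H=(J_t^h-I)_{ij}$. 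The duality \eqref{eq:byparts} then yields $R_j(t,Y_t^h;{a},y)=\E[\delta(V^{(j)})\mid Y_t^h]$. By Jensen, $\E[|R(t,Y_t^h;{a},y)|^p]\le \sum_j\E[|\delta(V^{(j)})|^p]$, and Meyer's inequality reduces this to $L^p$ norms of $V^{(j)}$ and $DV^{(j)}$. Three ingredients feed in: moment bounds on $DY_t^h$ and $D^2Y_t^h$ from the Euler recursion (polynomial in $y$ via Assumption \ref{ass:condition-drift-diffusion} and Lemma \ref{lem:bound-moment}); the inverse Malliavin matrix estimate of Section \ref{subsec:inverseMmatrix} giving $\|\gamma_t^{-1}\|_{L^p}\lesssim (t-{a})^{-1}$; and the decisive fact that $\|J_t^h-I\|_{L^p}=O(\sqrt{t-{a}})$ because $J^h$ starts at $I$ and accumulates martingale increments of size $\nabla\sigma\,dW$. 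Combining the $\sqrt{t-{a}}$ smallness of $J_t^h-I$ with the $\sqrt{t-{a}}$ gain from $\|DY_t^h\|_H$ exactly absorbs the $(t-{a})^{-1}$ singularity of $\gamma_t^{-1}$, so the bound is of order one times polynomial moments $\mathscr{P}_\nu(y)$.

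\textbf{Step 2: $R_2$ and the main obstacle.} For $R_2$, a direct computation applying $(\nabla_x+\nabla_y)$ twice produces
\begin{equation*}
\int\phi(x)\,(\nabla_x+\nabla_y)^2 p\,dx = \E\big[D^2\phi(Y_t^h):(J_t^h-I)^{\otimes 2}\big]+\E\big[\nabla\phi(Y_t^h)\cdot H_t^h\big],
\end{equation*}
where $(J_t^h-I)^{\otimes 2}$ is of order $t-{a}$ and $H_t^h$ also starts at $0$ with martingale growth $O(\sqrt{t-{a}})$. Integrating by parts twice in the first term and once in the second, with Malliavin processes analogous to $V^{(j)}$, yields a conditional expectation of Skorokhod integrals in which every power of $\gamma_t^{-1}$ is paired with a corresponding factor of $J_t^h-I$ or $H_t^h$ (or their Malliavin derivatives), preserving the polynomial-in-$y$ control. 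Finally one passes from $(\nabla_x+\nabla_y)^2 p/p$ to $R_2$ using $R_2 = (\nabla_x+\nabla_y)^2\log p = (\nabla_x+\nabla_y)^2 p/p - R\otimes R$ and applies the bound from Step 1 to $R\otimes R$. The main obstacle is the non-degeneracy estimate for $\gamma_t^{-1}$ under the Euler scheme with multiplicative noise: because the diffusion $\sigma(\lfloor s\rfloor,Y_{\lfloor s\rfloor}^h)$ is frozen at grid points rather than evaluated at $Y_s^h$, the usual continuous-time non-degeneracy arguments do not apply directly, and the uniform ellipticity $\Lambda\succeq \kappa I$ must be carefully propagated along the scheme (handled in Section \ref{subsec:inverseMmatrix}). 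A secondary but nontrivial difficulty is the second-order combinatorics: one has to verify that the Malliavin derivative $DV^{(j)}$ and its higher analogues, which involve $D^2Y_t^h$, $DJ_t^h$, and $D\gamma_t^{-1}$, do not produce terms carrying too many factors of $\gamma_t^{-1}$ uncompensated by $\sqrt{t-{a}}$ gains.
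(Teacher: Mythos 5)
Your proposal is correct and follows essentially the same route as the paper: represent $R$ and $R_2$ as conditional expectations of (iterated) Skorokhod integrals built from the covering field $DY_t^h\,\gamma_t^{-1}$, and balance the $(t-a)^{-1}$ singularity of the inverse Malliavin matrix against the $\sqrt{t-a}$ smallness of $J_t^h-I$, $H_t^h$, and $\|DY_t^h\|_H$, with the non-degeneracy of $\gamma_t$ and the moment bounds on $DY^h$, $D^2Y^h$, $DJ^h$, $D\gamma_t$ supplied separately. The only cosmetic difference is that you close the $L^p$ estimate with Meyer's inequality, whereas the paper expands $\delta(u((J_t^h)^T-I))$ explicitly via the product rule for the divergence and applies H\"older term by term; both require exactly the same ingredient estimates.
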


\begin{theorem}\label{thm:derive-density}
Suppose Assumption \ref{ass:condition-drift-diffusion} holds. Recall the numerical density $\hat{\rho}_t(x)$ for \eqref{eq:continuous-Euler-method}.  For $p\ge 1$ and $t>0$,  it holds that
\begin{multline}\label{eq:firstderivativecontrol}
\int_{\R^d} \big| \nabla \log \hat{ \rho }_{t}(x ) \big|^p \hat{\rho}_t(x)\,dx \leq
		C \left(\int_{\R^d} \big| \nabla \log \rho_0(y) \big|^p \rho_0(y)\,dy
				+ \mathbb{E}\left[ |R(t,X^h_t; 0,X_0^h)|^p\right]  \right),
\end{multline}
where $C$ depends only on $p$. The expectation is taken over the joint law for $(X_0^h, X_t^h)$. Similarly, it holds that
\begin{multline}\label{eq:secondderivativecontrol}
\int_{\R^d} \big| \nabla^2 \log \hat{ \rho }_{t}(x ) \big|^p \hat{\rho}_t(x)\,dx
		\leq C \big( \int_{\R^d} \big| \nabla^2 \log \rho_0(y) \big|^p \rho_0(y)\,dy+\mathbb{E}\big[ \big| R_2(t,X^h_t;0,X_0^h)\big|^{p} \big]\\
+\int_{\R^d}|\nabla\log\rho_0(y)|^{2p}\rho_0(y)\,dy+ \mathbb{E} \big[ \big| R(t,X^h_t;0,X_0^h) \big|^{2p} \big]  \big),
\end{multline}
where the expectation has the same meaning.
\end{theorem}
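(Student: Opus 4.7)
The plan is to exploit the key identity $\nabla_x = (\nabla_x + \nabla_y) - \nabla_y$, which converts gradient-in-$x$ on the Green's function into the combination $R$-times-itself plus a $\nabla_y$-derivative that can be moved onto $\rho_0$ by integration by parts in $y$. After this manipulation, both $\nabla \log \hat\rho_t(x)$ and $\nabla^2 \log \hat\rho_t(x)$ can be rewritten as conditional expectations of quantities involving $R$, $R_2$, and derivatives of $\log\rho_0$, at which point Jensen's inequality and integration against $\hat\rho_t$ yield the desired $L^p$ bounds in a single clean step.

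For the first-order bound, I would differentiate $\hat\rho_t(x)=\int p(t,x;0,y)\rho_0(y)\,dy$, write
\[
\nabla_x p = (\nabla_x+\nabla_y) p - \nabla_y p = R\,p - \nabla_y p,
\]
and integrate by parts in $y$ so that $\int \nabla_y p\cdot \rho_0\,dy = -\int p\,\rho_0\,\nabla_y\log\rho_0\,dy$. Dividing by $\hat\rho_t(x)$ identifies $p(t,x;0,y)\rho_0(y)/\hat\rho_t(x)$ as the conditional density of $X_0^h$ given $X_t^h=x$, giving the clean representation
\[
\nabla \log\hat\rho_t(x) = \mathbb{E}\!\left[R(t,x;0,X_0^h) + \nabla\log\rho_0(X_0^h)\,\big|\,X_t^h=x\right].
\]
Applying Jensen's inequality pointwise, then integrating against $\hat\rho_t(x)\,dx$ so that the outer and conditional expectations collapse into a joint expectation over $(X_0^h,X_t^h)$, and finally using the elementary $|a+b|^p\le 2^{p-1}(|a|^p+|b|^p)$, produces exactly \eqref{eq:firstderivativecontrol}.

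For the second-order bound, I would apply the same trick twice: starting from
\[
\partial_{x_i}\partial_{x_j}=(\partial_{x_i}+\partial_{y_i})(\partial_{x_j}+\partial_{y_j})-(\partial_{x_i}+\partial_{y_i})\partial_{y_j}-\partial_{y_i}(\partial_{x_j}+\partial_{y_j})+\partial_{y_i}\partial_{y_j},
\]
apply to $p$ and integrate by parts in $y$ in the three terms containing a $\partial_{y}$. Using $(\nabla_x+\nabla_y)p=Rp$ and the tensor identity $(\nabla_x+\nabla_y)^2 p = p[R_2+R\otimes R]$, and likewise $\nabla_y^2\rho_0 = \rho_0[\nabla^2\log\rho_0 + \nabla\log\rho_0\otimes\nabla\log\rho_0]$, the cross terms assemble neatly so that
\[
\frac{\nabla^2\hat\rho_t(x)}{\hat\rho_t(x)} = \mathbb{E}\!\left[R_2+\bigl(R+\nabla\log\rho_0(X_0^h)\bigr)^{\otimes 2} + \nabla^2\log\rho_0(X_0^h)\,\Big|\,X_t^h=x\right].
\]
Subtracting $(\nabla\log\hat\rho_t)^{\otimes 2}$, bounding it in $L^p$ via Jensen's inequality on the previously derived conditional expectation, then combining via the triangle inequality and integrating against $\hat\rho_t$ gives \eqref{eq:secondderivativecontrol}, where the factor $2p$ on $|R|$ and $|\nabla\log\rho_0|$ arises from the squared terms.

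The main obstacle I expect is the tensor/index bookkeeping in the second-order computation, especially tracking which pairs of derivatives combine into $R_2$ versus $R\otimes R$ versus cross terms mixing $R$ with $\nabla\log\rho_0$, and verifying that the integration-by-parts boundary terms vanish. Boundary vanishing should follow from the Gaussian-type decay of $p(t,x;0,y)$ in $y$ combined with the integrability of $\rho_0$ and its derivatives given by Assumption \ref{ass:Uniform bound}; but one should also ensure the integrands produced along the way are absolutely integrable so Fubini can be used to interchange the $y$-integration with the differentiation under the integral and with the conditional-expectation identification.
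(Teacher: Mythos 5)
Your proposal is correct and follows essentially the same route as the paper: write $\nabla_x p = Rp - \nabla_y p$, integrate by parts in $y$ to land the derivative on $\rho_0$, identify $p\,\rho_0/\hat\rho_t$ as the conditional law of $X_0^h$ given $X_t^h=x$, and apply Jensen's inequality; the second-order case is handled by the same decomposition applied twice, with the identity $\nabla_y^2\rho_0=\rho_0\bigl(\nabla^2\log\rho_0+(\nabla\log\rho_0)^{\otimes 2}\bigr)$ and the extra $\int|\nabla\log\hat\rho_t|^{2p}\hat\rho_t\,dx$ term absorbed via the first-order bound. Your conditional-expectation representation of $\nabla^2\log\hat\rho_t$ matches the paper's term-by-term.
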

Consequently, for all $t\in [0, T]$, and for $p_2, p_3$ in Assumption 
\ref{ass:condition-drift-diffusion} and Assumption \ref{ass:Uniform bound},
one has  
\begin{gather}\label{eq:integratedlogdensity}
\mathbb{E} \big[ \left|\nabla\log\hat{\rho}_{t}(X_{t}^{h} ) \right|^{p_2} \big] +\mathbb{E} \big[\left|\nabla^2\log\hat{\rho}_{t}( X_{t}^{h} ) 
\right|^{p_3} \big] \le C.
\end{gather}

We will prove these results in the subsequent subsections. The estimates for $R$ and $R_2$ are the core of this paper, which will be established by the Malliavin calculus.

\subsection{Proof of $L^p$ upper bounds for the logarithmic numerical density }

We first establish Theorem \ref{thm:derive-density} by assuming the estimates for $R$ and $R_2$ obtained in Theorem \ref{thm:RandR2}.
\begin{proof}[Proof of Theorem \ref{thm:derive-density}]
Let us consider the gradient. By \eqref{eq:densityformula}, we have
\begin{gather}
\begin{split}
\nabla_x\log\hat{\rho}_t(x) &=\int_{\R^d} \nabla_xp(t,x; 0,y)\frac{\rho_0(y)}{\hat{\rho}_t(x)}\,dy\\
&=\int_{\R^d} \left( R(t,x; 0, y)p-\nabla_yp(t,x; 0,y)\right)\frac{\rho_0(y)}{\hat{\rho}_t(x)}\,dy\\
&=\int_{\R^d} \left( R(t,x; 0, y)+\nabla_y\log\rho_0(y)\right)\frac{p(t,x; 0,y)\rho_0(y)}{\hat{\rho}_t(x)}\,dy.
\end{split}
\end{gather}
Note that $p(t,x; 0,y)\rho_0(y)$ is the joint law for $(X_0^h, X_t^h)$ so $p(t,x; 0,y)\rho_0(y)/\hat{\rho}_t(x)$
is the conditional law of $X_0^h$ on $X_t^h=x$. Using Jensen's inequality, one has for $p\ge 1$ that
\[
|\nabla_x\log\hat{\rho}_t(x)|^p
\le \int_{\R^d} \left| R(t,x; 0, y)+\nabla_y\log\rho_0(y)\right|^p \frac{p(t,x; 0,y)\rho_0(y)}{\hat{\rho}_t(x)}\,dy.
\]
Then, using the polynomial (in $y$) upper bound for $R$, and Assumption \ref{ass:Uniform bound}, taking the integral with respect to $\hat{\rho}_t(x)$ gives the first result.

The second-order derivative is similar. First, one notes that
\[
R_2(t, x; 0, y) p(t, x; 0, y)=(\nabla_x+\nabla_y)^2p-p(t, x; 0, y) R(t, x; 0, y)\otimes R(t, x; 0, y).
\]
One then has
\begin{gather}
\begin{split}
\nabla^2\log\hat{\rho}_t(x)
&=\int_{\R^d} \nabla_x^2p(t,x; 0,y)\frac{\rho_0(y)}{\hat{\rho}_t(x)}\,dy-\nabla\log\hat{\rho}_t(x)^{\otimes 2}\\
&=\int_{\R^d}(R_2+R\otimes R)p(t,x; 0,y)\frac{\rho_0(y)}{\hat{\rho}_t(x)}\,dy+\int_{\R^d} p(t,x; 0,y)\frac{\nabla_y^2\rho_0(y)}{\hat{\rho}_t(x)}\,dy\\
&\quad +\int_{\R^d} \left( R\otimes\nabla_y\log\rho_0(y)+\nabla_y\log\rho_0(y)\otimes R\right)p(t,x; 0,y)\frac{\rho_0(y)}{\hat{\rho}_t(x)}\,dy\\
&\quad -\nabla\log\hat{\rho}_t(x)^{\otimes 2}.
\end{split}
\end{gather}
Using $\nabla_y^2\rho_0(y)=\rho_0(y)\nabla_y^2\log\rho_0(y)+\rho_0(y)(\nabla_y\log\rho_0(y))^{\otimes 2}$, taking the $p$-th moments of the above quantity with respect to $\hat{\rho}_t(x)$, one has by Jensen's inequality that \eqref{eq:secondderivativecontrol}
is controlled by the right hand side plus $\int |\nabla\log\hat{\rho}_t(x)|^{2p}\hat{\rho}_t(x)\,dx$, which can be controlled by \eqref{eq:firstderivativecontrol}. This then finishes the proof of the second claim.

At last, we note the facts such as $\E(|R(t, X_t^h, 0, X_0^h)|^p | X_0=y)=\E|R(t, Y_t^h, 0, y)|^p$, \eqref{eq:integratedlogdensity}  holds by a direct application of Theorem \ref{thm:RandR2}.
\end{proof}

To prove Theorem \ref{thm:RandR2}, we need some preparations. Recall the process $Y_t^h$ for the Green's function in \eqref{eq:Eulercont_green}. Since the initial value at ${a}$ is fixed to be $y$, $Y_t^h$ is determined by the Brownian motion totally, we then apply the Malliavin calculus to obtain the formulas for $R$ and $R_2$ (recall the notations for Malliavin calculus  in section \ref{subsec:malliavin}). Below, we do not write out the dependence on the initial time ${a}$ in $Y_t^h$ for convenience.

Here, since $Y_t^h$ is valued in $\R^d$, the Malliavin derivative will be a matrix. Let us fix the convention that
$D_rY_t\in \R^{m\times d}$, that is the $j$th column is the Malliavin derivative of $(Y_t^h)^j$.
For the identity in \eqref{eq:byparts}, it is better to require that $u$ is also of size $m\times d$ and then we denote
\[
\langle A, B\rangle_H:=\int_{a}^{\infty}A^T B\,dt,\quad \langle\langle A, B\rangle\rangle_H:=\int_{a}^{\infty}\tr(A^T B)\,dt.
\]
The divergence is then understood as column-wise so that $\delta(u)$ is valued in $\R^d$. 
With this understanding, let us consider the so-called covering field related to a process $Y_t^h$. 
\begin{definition}
A field $u\in L^2(\Omega, L^2([{a},\infty), \R^{m\times d}) )$ is called a covering field for $Y_t^h$ if
for any smooth $\varphi: \R^d\to \R$, it holds that
\begin{gather}
\nabla\varphi(Y_t^h)=\langle D\varphi(Y_t^h), u\rangle_H.
\end{gather}
\end{definition}
For a process $v\in L^2(\Omega, L^2(a, \infty; \R^{m\times d}))$, if 
\[
\langle DY_t^h, v\rangle_H=\int_{a}^{\infty} (DY_t^h)^Tv\,dt
\]
is nonsingular, we can then choose
\begin{gather}\label{eq:coverfieldwithDX}
u= v \langle DY_t^h, v\rangle_H^{-1}\in L^2(\Omega; L^2({a}, T; \R^{m\times d})).
\end{gather}
If we want to study the dynamics up to time $T$ only, we can then take $v'= v1_{t\le T}$.
For studying the numerical solutions. However, if $v=DY_t^h$, then $v_r=0$ for $r>t$ so there is no need to take this truncation.

For a covering field $u$, let $u_i=u(:, i)\in \R^m$ be the $i$-th column. Then,  we have the following observation.
\begin{lemma}\label{lmm:Malliavinorder}
For $F\in L^2(\Omega; \R)$ that is measurable with respect to $\hat{\mathscr{F}}$, the following holds.
\begin{equation}
	\mathbb{E} \Big[\delta\big( u_k \delta ( u_i F ) \big)| Y_{t}^{h}  = x \Big]
	=\mathbb{E} \Big[ \delta\big( u_i \delta ( u_k F ) \big)	| Y_{t}^{h}  = x \Big].
\end{equation}

\end{lemma}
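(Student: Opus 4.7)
The plan is to test both sides of the identity against a generic smooth bounded $\varphi: \R^d \to \R$ and show that each of $\E[\varphi(Y_t^h)\,\delta(u_k\delta(u_iF))]$ and $\E[\varphi(Y_t^h)\,\delta(u_i\delta(u_kF))]$ collapses to the common quantity $\E[F\,\partial_i\partial_k\varphi(Y_t^h)]$. Since $C_b^\infty(\R^d)$ is a separating class for the law of $Y_t^h$, equality of these two families of test-function averages yields the required equality of conditional expectations $\mathbb{P}_{Y_t^h}$-almost surely.

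Concretely, starting from $\E[\varphi(Y_t^h)\,\delta(u_k\delta(u_iF))]$, I would first invoke the integration-by-parts formula \eqref{eq:byparts} with $G=\varphi(Y_t^h)$ and $H$-valued integrand $v=u_k\,\delta(u_iF)$ to rewrite it as $\E[\langle D\varphi(Y_t^h),u_k\rangle_H\,\delta(u_iF)]$. The defining property of the covering field (applied component-wise to $u$) then identifies the inner product with the partial derivative $\partial_k\varphi(Y_t^h)$. A second application of \eqref{eq:byparts}, now with $G=\partial_k\varphi(Y_t^h)$ and $v=u_iF$, followed by the covering-field identity applied to $\partial_k\varphi$ in place of $\varphi$, produces $\E[F\,\partial_i\partial_k\varphi(Y_t^h)]$. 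Exchanging the roles of $i$ and $k$ and running the same two steps gives the analogous representation for $\E[\varphi(Y_t^h)\,\delta(u_i\delta(u_kF))]$, and the smoothness of $\varphi$ makes the two mixed partials coincide.

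The main obstacle is not conceptual but the verification that each intermediate object is in the right space so that \eqref{eq:byparts} is legitimate at every step: the processes $u_iF$ and $u_k\,\delta(u_iF)$ must lie in $\mathrm{Dom}\,\delta$, and $\varphi(Y_t^h)$ together with $\partial_k\varphi(Y_t^h)$ must belong to $\mathbb{D}^{1,2}$, all with compatible $L^2$-integrability. For the concrete covering field constructed via \eqref{eq:coverfieldwithDX} this ultimately reduces to $L^p$ control of the stochastic Jacobian $J_t^h$, the inverse Malliavin matrix, and their Malliavin derivatives, which the assumptions in place (notably the uniform ellipticity together with Assumption \ref{ass:condition-drift-diffusion}) are precisely designed to provide. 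Once these integrability checks are granted, the proof is a clean two-step integration by parts whose algebraic content is nothing more than the commutativity of $\partial_i$ and $\partial_k$ transported to the Malliavin side.
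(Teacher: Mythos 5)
Your proposal is correct and follows essentially the same route as the paper: both arguments test against a smooth $\varphi$, apply the integration-by-parts formula \eqref{eq:byparts} and the covering-field identity twice to reduce each side to $\E[F\,\partial_i\partial_k\varphi(Y_t^h)]$, and conclude from the symmetry of the mixed partials. The only difference is cosmetic (you run the chain of equalities in the reverse direction and are more explicit about the domain and integrability checks).
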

\begin{proof}
By the definition of the covering field $u$, one has for any smooth test function $\varphi$ that
\begin{multline*}
\E\left[\partial_{i k}\varphi(Y_t^h) F \right]=\E\left[\langle D\partial_k\varphi(Y_t^h), u_i \rangle_H F \right]
=\E\left[\langle D\partial_k\varphi(Y_t^h), u_i F \rangle_H  \right]\\
=\E\left[\partial_k\varphi(Y_t^h) \delta( u_i F )  \right]
=\E\left[\tcb{\langle D\varphi(Y_t^h), u_k\rangle_H} \delta( u_i F )  \right]=\E\left[ \varphi(Y_t^h) \delta(u_k \delta(u_i F))\right].
\end{multline*}
The left-hand side is symmetric in $i$ and $k$, and so is the right hand. Hence,
\[
\E\left[ \varphi(Y_t^h) \delta(u_k \delta(u_i F))\right]=\E\left[ \varphi(Y_t^h) \delta(u_i \delta(u_k F))\right].
\]
This thus gives the desired identity.
\end{proof}

\begin{lemma}\label{lmm:greenexpression}
Consider $R$ defined in \eqref{eq:firstordercommn}, and $R_2$ defined in \eqref{eq:secondordercommn} with ${a}=t_m$ being a grid point.  Let $Y_t^h(y)$ be the solution to \eqref{eq:Eulercont_green} (with initial value $Y_{{a}}^h=y$). Then we have
	\begin{equation}
		R(t,x; {a},y)  
		= \mathbb{E} ( \delta ( u ( (J_{t}^{h})^T - I ) ) | Y^{h}_t = x ) .
	\end{equation}
Recall that $(J_{t}^h)_{ij}=\partial_{y_i}(Y_t^h)^j$ is the stochastic Jacobian.
The $R_2$ satisfies the following. 
\begin{multline}\label{eq:R2usingU}
(R_2(t, x, a, y))_{\ell k}
= \mathbb{E} \Big[ 
	   \delta\big( u_k \delta ( u_{\ell} ((J_t^{h})_{k k}-1)((J_t^{h})_{\ell \ell} - 1 )  ) \big)
	   +\sum_{i=1,i\neq l}^d \delta\big( u_k \delta (u_i ((J_t^{h})_{k k}-1)(J_t^{h})_{\ell i}  ) \big) \\
+\sum_{j=1,j\neq k}^d \delta\big( u_j
\delta ( u_{\ell} (J_t^{h})_{ k j}((J_t^{h})_{\ell \ell} -1) ) \big)+
\sum_{i\neq l,j\neq k}^d \delta\big( u_i \delta ( u_j (J_t^{h})_{ k j} (J_t^{h})_{\ell i} ) \big)	\\
  +\sum_{i=1}^d \delta( u_i  (\nabla_y^2)_{\ell k} (Y_t^h)^{i} )   | Y_t = x   \Big]-R_\ell(t,x, a, y) R_k(t, x, a, y).
\end{multline}
\end{lemma}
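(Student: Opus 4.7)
The plan is to characterize $R$ and $R_2$ weakly: test each against a smooth compactly supported $\varphi$ (multiplied by $p(t,x;a,y)$), push the $\nabla_x$ derivatives off $p$ by integration by parts in $x$, push the $\nabla_y$ derivatives off $p$ by the chain rule (which produces factors of $J_t^h$ and $H_t^h$), and finally invoke the covering field $u$ together with the duality \eqref{eq:byparts} to convert derivatives of $\varphi(Y_t^h)$ into Skorohod divergences acting on $u$. Since $\varphi$ is arbitrary, passing to conditional expectations given $Y_t^h = x$ yields the two identities.

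For the first formula I would compute, for any test function $\varphi$,
\begin{equation*}
\int \varphi(x)\, R(t,x;a,y)\, p(t,x;a,y)\, dx \;=\; -\,\mathbb{E}\bigl[\nabla\varphi(Y_t^h)\bigr] + \mathbb{E}\bigl[J_t^h\,\nabla\varphi(Y_t^h)\bigr] \;=\; \mathbb{E}\bigl[(J_t^h - I)\,\nabla\varphi(Y_t^h)\bigr],
\end{equation*}
where the first term comes from integrating $\nabla_x p$ by parts in $x$ and the second from $\nabla_y \mathbb{E}[\varphi(Y_t^h)]$ via the chain rule on $Y_t^h(y)$. Writing the $j$-th entry of $\nabla\varphi(Y_t^h)$ as $\langle D\varphi(Y_t^h), u_j\rangle_H$ by the covering property, pulling the scalar factors $(J_t^h)_{ij} - \delta_{ij}$ inside, and applying \eqref{eq:byparts} produces $\mathbb{E}[\varphi(Y_t^h)\,\delta(u((J_t^h)^T - I))]$ componentwise, which gives the stated formula for $R$ after conditioning on $Y_t^h = x$.

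For the second formula, the starting observation is the algebraic identity
\begin{equation*}
R_2(t,x;a,y)\,p \;=\; (\nabla_x+\nabla_y)^2 p \;-\; (R\otimes R)\, p,
\end{equation*}
already used in the proof of Theorem \ref{thm:derive-density}, which reduces the task to a Skorohod representation of $(\nabla_x+\nabla_y)^2 p$. Testing its $(\ell,k)$-entry against $\varphi$ and handling the four pieces $\nabla_x^2$, $\nabla_x\nabla_y$, $\nabla_y\nabla_x$, $\nabla_y^2$ exactly as in the first step, the four pieces collapse cleanly into
\begin{equation*}
\mathbb{E}\!\left[\sum_{k',\ell'} \bigl((J_t^h)_{k k'}-\delta_{k k'}\bigr)\bigl((J_t^h)_{\ell \ell'}-\delta_{\ell \ell'}\bigr)\partial_{\ell' k'}\varphi(Y_t^h) \;+\; \sum_{k'} (H_t^h)_{\ell k k'}\,\partial_{k'}\varphi(Y_t^h)\right];
\end{equation*}
the Hessian term $H_t^h$ appears solely from differentiating $J_t^h$ in $y$ inside $\nabla_y^2\mathbb{E}[\varphi(Y_t^h)]$. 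I then apply the covering identity twice in succession to each second-order derivative $\partial_{\ell' k'}\varphi(Y_t^h)$, turning it into an iterated divergence $\delta(u_{k'}\delta(u_{\ell'}\cdot))$, and once to each first-order $\partial_{k'}\varphi(Y_t^h)$. Splitting the double sum by the four incidence cases $(k'=k,\ell'=\ell)$, $(k'=k,\ell'\neq\ell)$, $(k'\neq k,\ell'=\ell)$, $(k'\neq k,\ell'\neq\ell)$ reproduces exactly the four nested-divergence terms displayed in \eqref{eq:R2usingU}. Finally, dividing by $p$, conditioning on $Y_t^h = x$, and subtracting $R_\ell R_k$ yields the claim.

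The main difficulty is purely combinatorial index bookkeeping rather than analytic subtlety. Two remarks are in order. First, the order in which $u_{k'}$ and $u_{\ell'}$ are peeled off in the iterated integration by parts is not canonical, but Lemma \ref{lmm:Malliavinorder} guarantees that both orderings coincide after conditioning on $Y_t^h$, so the representation in \eqref{eq:R2usingU} is unambiguous. Second, each application of \eqref{eq:byparts} and of the product rule \eqref{eq:divergenceproduct} requires the scalar factors $(J_t^h)_{ij} - \delta_{ij}$ and $(H_t^h)_{\ell k k'}$ to lie in the appropriate Malliavin--Sobolev spaces and the resulting integrands to belong to $\mathrm{Dom}\,\delta$; both conditions are standard consequences of Assumption \ref{ass:condition-drift-diffusion} and the smoothness of the Euler flow, and I will take them for granted.
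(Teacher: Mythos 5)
Your proposal is correct and follows essentially the same route as the paper: test against $\varphi$, integrate by parts in $x$, use the chain rule in $y$ to produce $J_t^h$ and $H_t^h$, convert derivatives of $\varphi(Y_t^h)$ into iterated divergences via the covering field, and invoke Lemma \ref{lmm:Malliavinorder} to fix the ordering. The only (cosmetic) difference is that you combine the four second-order pieces into the factored form $(J-I)\otimes(J-I):\nabla^2\varphi$ before converting to divergences, whereas the paper converts each piece separately and then symmetrizes with Lemma \ref{lmm:Malliavinorder}; both yield \eqref{eq:R2usingU}.
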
	
\begin{proof}
By the definition of the covering field, one has 
\[
 \mathbb{E} ( \nabla \varphi (Y^{h}_t) ) = \E(\tcb{\langle D\varphi(Y_t^h), u\rangle_H}) =\mathbb{E} ( \varphi (Y^{h}_t) \delta ( u ) ).
\]
\tcb{Note that the second equality above is due to the integration by parts formula in Malliavin calculus. Also, the tower property tells that $\mathbb{E} ( \varphi (Y^{h}_t) \delta ( u ) ) = \mathbb{E} \big(\mathbb{E}( \varphi (Y^{h}_t) \delta ( u ) \mid Y^h_t)\big)$. Recalling that the initial condition $Y^h_a = y$, then by definition one has that} 
\begin{gather*}
\int_{\mathbb{R}^d} \nabla \varphi (x) p(t,x; {a},y) d x  
		=\int_{\mathbb{R}^d}\varphi (x) \mathbb{E} ( \delta ( u ) | Y^{h}_t =x ) p(t,x; {a},y) dx.
\end{gather*}
\tcb{For the left, one does integration by parts with respect to $x$ and then obtains (note that the above holds for any test function $\varphi$)}
\[
\nabla_{x} \log p(t,x; {a},y)=-\mathbb{E}\left(\delta(u) | Y^{h}_t=x\right).
\]
	
Moreover, direct computation leads to
\begin{multline*}
\nabla_{y} \mathbb{E} \varphi\left(Y^{h}_t\right)=\mathbb{E} 
	\Big[ \sum_j \nabla_{y} Y^{h,j}_t \partial_j \varphi \left(Y^{h}_t(y)\right)\Big]
	=\sum_j \E[\nabla_y Y_t^{h,j}\tcb{\langle D\varphi(Y_t^h), u_j\rangle_H}]\\
	=\sum_j \E[\langle D\varphi(X_t^h),  \nabla_y Y_t^{h,j} u_j\rangle_H]
	=\E[\varphi(Y_t^h) \delta(u (J_t^h)^T)].
\end{multline*}
Similar computation gives
\[
\nabla_{y} \log p(t,x; {a},y)=\mathbb{E}\left(\delta\left(u (J^{h}_t)^T \right) | Y^{h}_t = x\right).
\]
The result for $R$ then follows.

Similar computation gives for 
$ \ell , k \in \{ 1, 2, \cdots , d \} $ the following. 
\begin{align*}
\bigg( \frac{\nabla_x^{2} p(t,x;{a},y) }{ p(t,x;{a},y)}\bigg)_{\ell k} 
	& = \mathbb{E} \Big[ \delta\big( u_k \delta ( u_{\ell} ) \big) | Y_{t}^{h} = x \Big] \\
	\bigg( \frac{\nabla_y \nabla_x p(t,x;{a},y) }{ p(t,x;{a},y)}\bigg)_{\ell k} 
	&= - \sum_{i=1}^{d} \mathbb{E} \Big[ \delta\big( u_k \delta ( u_i (J_t^h)_{\ell i} ) \big)| Y_{t}^{h}  = x \Big] \\
	\bigg(\frac{\nabla_x \nabla_y p(t,x;{a},y) }{ p(t,x;{a},y) }\bigg)_{\ell k} 
	& = - \sum_{j=1}^{d}\mathbb{E} \Big[
	\delta\big( u_{\ell} \delta ( u_j 
 (J_t^{h})_{ k j} ) \big)| Y_{t}^{h} = x \Big] \\
	\bigg(\frac{\nabla_y^{2}  p(t,x;{a},y) }{ p(t,x;{a},y) }\bigg)_{\ell k} 
	& =  \mathbb{E} \Big[ \sum_{i,j=1}^{d}
	\delta\big( u_i \delta ( u_j  (J_t^{h})_{ \ell i}  (J_t^{h})_{ k j} ) \big)
	+\sum_{i=1}^{d} \delta( u_i (\nabla_{y}^2)_{\ell k} (Y_t^{h})^{ i} ) | Y_{t}^{h} = x \Big].
\end{align*}
Then, the formula for $R_2$ follows by using Lemma \ref{lmm:Malliavinorder}.
\end{proof}

With expressions of $R$ and $R_2$ in hand, we will then prove Theorem \ref{thm:RandR2}.
The intuition is that both $J-I$ and $\nabla_y J$ are half order, $u$ is like $t^{-1}$, and each one $\delta(\cdot)$ contributes $t^{1/2}$. Hence, the orders in these terms exactly balance, as desired since $\nabla_x\sim -\nabla_y$.
To prove them, we list some results we will prove in later subsections. Recall that we take $u$ as in \eqref{eq:coverfieldwithDX}. 
Consider the Malliavin matrix
\begin{gather}\label{eq:discreteMmatrix}
G_{t}=\langle DY_t^h, DY_t^h\rangle_H=\int_{{a}}^{t} (D_r Y_t^h)^T(D_rY_t^h) dr,
\end{gather}
which is a $d\times d$ random matrix. In Theorem \ref{thm:Malliavin-matrix}, we will show for any $p>1$ that
\begin{gather}
 \mathbb{E}|G_t^{-1}|^{p} \leq C(t-{a})^{-p}.
\end{gather}
Here, $|G_t|$ is the spectral norm and $C$ does not depend on $h$.
Moreover, we have the following estimates, and the sketch of the proof will be deferred to section \ref{subsec:proofother}.
\begin{proposition}\label{eq:otherMalliavin}
Fix $T>0$. Fix a positive integer $p\ge 2$. Suppose Assumption \ref{ass:condition-drift-diffusion} holds. Then for ${a}=t_m \in [0,T]$, and $t, s, u, r \in [{a}, T]$, there exist  
positive constants $C_1 = C(p,T)$ and $\nu := c(1 +pq)$ (for some universal constant $ c $ and $ q $ in Assumption \ref{ass:condition-drift-diffusion}) independent of ${a}$, $t$, $h$, $s$, $u$, $r$ such that (recall the definition of the polynomial $\mathscr{P}_k(y)$ in \eqref{eq:Pk})
\begin{enumerate}[(i)]
\item $\mathbb{E}|D_sY^h_{t}|^p \leq C_1\mathscr{P}_{p}(y),\quad \mathbb{E}|D_rD_sY^h_{t}|^p \leq C_1\mathscr{P}_{\nu}(y),\quad \mathbb{E}|D_uD_rD_sY^h_{t}|^p \leq C_1\mathscr{P}_{\nu}(y),$
\item $\mathbb{E}|J^h_t - I|^p \leq C_1(t-{a})^{\frac{p}{2}},\quad \mathbb{E}|D_sJ^h_t|^p \leq C_1\mathscr{P}_{\nu}(y),\quad \mathbb{E}|D_rD_sJ^h_t|^p \leq C_1\mathscr{P}_{\nu}(y),$
\item $\mathbb{E}|H^h_t|^p \leq C_1\mathscr{P}_{\nu}(y)(t-{a})^{\frac{p}{2}},\quad \mathbb{E}|D_sH^h_t|^p \leq C_1\mathscr{P}_{\nu}(y),$
\item $\mathbb{E}|D_sG_t|^p \leq C_1\mathscr{P}_{\nu}(y)(t-{a})^p,\quad \mathbb{E}|D_rD_sG_t|^p \leq C_1\mathscr{P}_{\nu}(y)(t-{a})^p,$
\item $\mathbb{E}|\delta(DY^h_{t})|^p \leq C_1\mathscr{P}_{\nu}(y)(t-{a})^{\frac{p}{2}},\quad \mathbb{E}|D_s\delta(DY^h_{t})|^p \leq C_1\mathscr{P}_{\nu}(y).$
\end{enumerate}
\end{proposition}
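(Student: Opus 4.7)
The plan is to exploit the evolutionary structure of the Euler scheme: every quantity appearing in (i)--(v) satisfies a linear recursion driven by lower-order quantities and Brownian increments, so the bounds reduce to a combination of the Burkholder-Davis-Gundy (BDG) inequality, Lemma \ref{lem:bound-moment} for polynomial moments of $Y^h$, and a discrete Gronwall argument. Throughout, the powers of $(t-a)$ track the number of stochastic-integral accumulations, while the polynomial factor $\mathscr{P}_\nu(y)$ tracks the polynomial-growth bounds on the higher derivatives of $b$ and $\sigma$ from Assumption \ref{ass:condition-drift-diffusion}.

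For (i), apply $D_s$ to the scheme $Y^h_{t_{k+1}} = Y^h_{t_k} + h\,b(t_k,Y^h_{t_k}) + \sigma(t_k,Y^h_{t_k})(W_{t_{k+1}}-W_{t_k})$. When $s$ lies in the interval $[t_j,t_{j+1}]$, the derivative vanishes up to time $t_j$ and is seeded at $t_{j+1}$ with value $\sigma(t_j,Y^h_{t_j})$, which has linear growth in $|Y^h_{t_j}|$; for $t_k \geq t_{j+1}$ the derivative then evolves according to
\[
D_sY^h_{t_{k+1}} = \bigl[I + h\nabla b(t_k,Y^h_{t_k}) + \nabla\sigma(t_k,Y^h_{t_k})(W_{t_{k+1}}-W_{t_k})\bigr]D_sY^h_{t_k}.
\]
Taking $p$-th moments, invoking the Lipschitz bounds on $\nabla b, \nabla\sigma$, applying BDG to the martingale part, and closing with discrete Gronwall yield the claimed bound uniformly in $h$. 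Extending to $t \in (t_k,t_{k+1})$ is immediate via BDG on \eqref{eq:Eulercont_green}. The higher-order derivatives $D_rD_sY^h_t$ and $D_uD_rD_sY^h_t$ follow by iteration: differentiating the recursion introduces forcings containing $\nabla^2 b, \nabla^2\sigma$ (and $\nabla^3 b, \nabla^3\sigma$ for the third order), whose polynomial growth produces the $\mathscr{P}_\nu(y)$ factor after invoking Lemma \ref{lem:bound-moment} and H\"older; the structure remains a linear inhomogeneous recursion handled by Gronwall.

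For (ii)--(iii), the Jacobian $J^h_t$ obeys the same linear recursion as $D_sY^h_t$ but with initial condition $J^h_a = I$, so $J^h_t - I$ is a pure accumulated stochastic integral and BDG yields the $(t-a)^{p/2}$ factor; its Malliavin derivatives are controlled exactly as in (i). The Hessian $H^h_t$ satisfies a linear equation whose forcing is $\nabla^2 b(t_k,Y^h_{t_k})\,J^h_{t_k}\otimes J^h_{t_k}$ plus the analogous $\sigma$ term, so (ii) together with polynomial growth yields both the $(t-a)^{p/2}$ scaling and the $\mathscr{P}_\nu(y)$ factor. For (iv), $D_sG_t = \int_a^t D_s\bigl[(D_rY^h_t)^T(D_rY^h_t)\bigr]\,dr$; H\"older in $r$ gives a factor $(t-a)^{p-1}$ and the product rule combined with (i) adds one more $(t-a)$, producing $(t-a)^p$, and $D_rD_sG_t$ is analogous. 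For (v), adaptedness of $r \mapsto D_rY^h_t$ identifies $\delta(DY^h_t)$ with the It\^o integral $\int_a^t D_rY^h_t\,dW_r$, so BDG together with (i) gives the $(t-a)^{p/2}$ scaling; commuting $D_s$ with $\delta$ produces an adapted It\^o integral of $D_sD_rY^h_t$ plus the pointwise term $D_rY^h_t\big|_{r=s}$, each bounded via the previous estimates.

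The main obstacle is the bookkeeping. Because $b$ and $\sigma$ are frozen at the left endpoint $\lfloor\cdot\rfloor$ of each subinterval, Malliavin differentiation does not commute smoothly across grid points, and one must split into cases according to whether $s, r, u$ lie in the same or different Euler intervals, carefully distinguishing the seed contribution (in the interval containing the differentiation variable) from the purely linear propagation afterwards. One must also verify that the polynomial growth accumulated from repeated use of $\nabla^2 b, \nabla^2\sigma, \nabla^3 b, \nabla^3\sigma$ is controlled uniformly in $h$ by Lemma \ref{lem:bound-moment}. Once this case analysis is organized cleanly, each estimate reduces to a standard application of BDG and discrete Gronwall for a linear recursion with known forcing.
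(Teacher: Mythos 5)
Your overall strategy for (i)--(iv) is the same as the paper's: write each object as a linear inhomogeneous recursion over the Euler grid, bound the forcing using BDG, the polynomial-growth bounds on $\partial_x^\alpha b,\partial_x^\alpha\sigma$ and Lemma \ref{lem:bound-moment}, and close with discrete Gr\"onwall; the paper organizes this by ``classes'' of derivative order exactly as you describe, and your bookkeeping of the powers of $(t-a)$ and of $\mathscr{P}_\nu(y)$ matches (the extra factor of $(t-a)$ in (iv) comes simply from the length of the $du$-integration interval combined with the uniform moment bounds on $DY^h$ and $DDY^h$, rather than from the product rule, but the count is right).

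There is, however, a genuine gap in your treatment of (v). The process $r\mapsto D_rY^h_t$ is \emph{not} adapted: for $r<t_j<t$, the term $D_rY^h_{t_j}\cdot(h\nabla b_j+\nabla\sigma_j\cdot\delta W_{t_{j+1}})$ in \eqref{eq:DXdef} depends on the Brownian path on $[r,t]$, so $D_rY^h_t$ is $\hat{\mathscr{F}}_t$-measurable but not $\hat{\mathscr{F}}_r$-measurable. Consequently $\delta(DY^h_t)$ is a genuine Skorohod integral, it cannot be identified with the It\^o integral $\int_a^t D_rY^h_t\,dW_r$, and BDG cannot be applied to it directly; the same objection applies to your claim that $\delta(D_s(DY^h_t))$ is an ``adapted It\^o integral.'' The paper's fix is to push $\delta$ through the recursion \eqref{eq:DXdef} term by term using the product rule $\delta(Gu)=G\delta(u)-\langle DG,u\rangle_H$: the only genuinely adapted piece is $\sigma(\lfloor r\rfloor,Y^h_{\lfloor r\rfloor})^T\mathbf{1}_{\{r<t\}}$, whose divergence is an It\^o integral amenable to BDG, while the anticipating pieces produce $\delta(DY^h_{t_j})\cdot(\cdots)$ plus correction terms $\langle DY^h_{t_j}, D(h\nabla b_j+\nabla\sigma_j\cdot\delta W_{t_{j+1}})\rangle_H$, which are then controlled by the Class-1 estimates (noting $\langle DY^h_{t_j},\nabla\sigma_j^T\mathbf{1}_{[t_j,t_{j+1})}\rangle_H=0$ since $D_rY^h_{t_j}=0$ for $r>t_j$), and one closes again with Gr\"onwall. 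Your argument for (v) needs to be rewritten along these lines; as stated it rests on a false adaptedness claim.
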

Now, we take the above claims for granted to show Theorem \ref{thm:RandR2}.

\begin{proof}[Proof of Theorem \ref{thm:RandR2}]

We first recall that $u=DY_t^h G_t^{-1}$ so that
\begin{gather}\label{eq:divergenceofcoveringfield}
\delta(u)=\delta(DY_t^h)\cdot G_t^{-1}-\langle\langle DY_t^h, DG_t^{-1}\rangle\rangle_H.
\end{gather}
It follows that for any $p>1$, one has by H\"older's inequality as well as by Proposition \ref{eq:otherMalliavin} that
\begin{gather}\label{eq:u1}
\E |u|^p\le (\E|DY_t^h|^{2p})^{1/2}(\E|G_t^{-1}|^{2p})^{1/2}\le C\mathscr{P}_{\nu}(y) (t-{a})^{-p},
\end{gather}
and
\begin{multline*}
\E|\delta(u)|^p\le C\mathscr{P}_{\nu}(y)\Big((\E|\delta(DY_t^h)|^{2p})^{1/2}(\E|G_t^{-1}|^{2p})^{1/2} \\
+(t-{a})^{p-1}\left(\int_{{a}}^t(\E|DY_t^h|^{2p})^{1/2}(\E|DG_t^{-1}|^{2p})^{1/2}\right) \Big).
\end{multline*}
The first term is controlled by $C\mathscr{P}_{\nu}(y)(t-{a})^{p/2}(t-{a})^{-p}\le C\mathscr{P}_{\nu}(y)(t-{a})^{-p/2}$.
Since $DG_t^{-1}=(G_t^{-1}D^{i}G_t G_t^{-1})_{i=1:m}$, one has
\begin{multline*}
\int_{{a}}^t(\E|DY_t^h|^{2p})^{1/2}(\E|DG_t^{-1}|^{2p})^{1/2}
\le C\mathscr{P}_{\nu}(y)\int_{{a}}^t (\E|G_t^{-1}|^{8p})^{1/4}(\E|DG_t|^{4p})^{1/4}\le C\mathscr{P}_{\nu}(y)(t-{a})^{-p+1}.
\end{multline*}
Hence, 
\begin{gather}\label{eq:u2}
\E|\delta(u)|^p\le C\mathscr{P}_{\nu}(y)(t-{a})^{-p/2}.
\end{gather}

Next, regarding $\mathbb{E}|R(t,Y^h_t; {a},y)|^p$,
by Lemma \ref{lmm:greenexpression}, Jensen's inequality and \eqref{eq:divergenceproduct}, one finds that
\begin{gather*}
\E |R(t,Y^h_t; {a},y)|^p\le \E|\delta(u((J_t^h)^T-I))|^p
\le C\left(\E|\delta(u)\cdot ((J_t^h)^T-I)|^p+\E|\langle\langle u, DJ_t^h\rangle\rangle_H|^p \right).
\end{gather*}
Applying H\"older's inequality and the results in Proposition \ref{eq:otherMalliavin}, together with \eqref{eq:u1}-\eqref{eq:u2}, the result then follows. As mentioned, the intuition is that $J-I$ is half order, which cancels the singularity near $t=0$ in $\delta(u)$ exactly, giving that
\[
\E |R(t,Y^h_t; {a},y)|^p\le C\mathscr{P}_{\nu}(y).
\]

For $R_2$, we first note that the 2-norm of a matrix can be controlled by its Frobenius norm, so it suffices to show that
\begin{equation*}
    \mathbb{E}|(R_2(t,Y^h_t;{a},y))_{\ell k}|^p \leq C\mathscr{P}_{\nu}(y),\quad \forall \ell,k.
\end{equation*}
The idea is quite straightforward and similar by using Proposition \ref{eq:otherMalliavin}, but the calculation is more tedious. 
We skip the details.
\end{proof}

\subsection{$L^p$ estimate for inverse of Malliavin matrix}\label{subsec:inverseMmatrix}

Here, we will also fix the initial time ${a}=t_m$ to be some grid point. 
Suppose that $t\in (t_{n-1}, t_n]$. We denote 
\begin{gather}
\delta t=t-\lfloor t^-\rfloor,\quad \lfloor t^-\rfloor=\lim_{s\to t^-}\lfloor s\rfloor.
\end{gather}
This definition makes $\delta t_n=h$ for a grid point $t_n$. We also denote 
\begin{gather}
\delta W_t:=W_t-W_{t-\delta t},
\end{gather}
and $\nabla b_n:=\nabla b(t_n, Y_{t_n}^h)$.
Recall that $D_r Y_t^h\in \R^{m\times d}$, then
\begin{gather}
D_r Y_t^h=
	\begin{cases}
	       0 & r>t.\\
		\sigma_{n-1}^T & r\in (t_{n-1}, t],\\
		D_rY_{t_{n-1}}^h+D_rY_{t_{n-1}}^h\cdot(\nabla b_{n-1}\delta t
		+\nabla\sigma_{n-1}\cdot \delta W_t) & r\le t_{n-1}.
	\end{cases}
\end{gather}

Then, the Malliavin matrix (as $\R^{d\times d}$ valued) is given by
\begin{multline}
	G_{t}=\int_{{a}}^{t} (D_r Y_t^h)^T(D_r Y_t^h) dr
	=\delta t \Lambda_{n-1}+\\
	(I+\nabla b_{n-1}\delta t +\nabla\sigma_{n-1}\cdot \delta W_t )^TG_{t_{n-1}} (I+\nabla b_{n-1}\delta t
	+\nabla\sigma_{n-1}\cdot \delta W_t ).
\end{multline}

Below we give the estimate of $\E | G_{t_{n}}^{-1}|^p$.

\begin{theorem}\label{thm:Malliavin-matrix}
Suppose Assumption \ref{ass:condition-drift-diffusion} holds. There exists $h_0>0$ such that for all $h\le h_0$ and any $p>1$, the Malliavin matrix $ G_{t} $ satisfies
	\begin{equation}
		\E | G_{t}^{-1} |^p \leq C (t-a)^{-p},
	\end{equation}
where $C$ is independent of $h$ and $a, t$ (may depend on $h_0, T, p$).
\end{theorem}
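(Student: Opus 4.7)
The plan is to establish $\lambda_{\min}(G_t) \ge c(t-a)$ in $L^p$-sense by unfolding the recursion for $G_t$ and combining uniform ellipticity with the arithmetic--harmonic-mean inequality. Iterating the given recursion from $G_{t_m}=0$ gives the sum representation
\begin{equation*}
G_{t_n}=h\sum_{k=m+1}^{n} C_k^T\Lambda_{k-1}C_k,\qquad C_k:=A_{k+1}A_{k+2}\cdots A_n,\quad C_n:=I,
\end{equation*}
where $A_j=I+\nabla b_{j-1}h+\nabla\sigma_{j-1}\cdot\delta W_{t_j}$. For non-grid $t\in(t_{n-1},t_n)$ the $k=n$ term is replaced by $\delta t\cdot\Lambda_{n-1}$ after sandwiching the interior sum by the last-step Jacobian $I+\nabla b_{n-1}\delta t+\nabla\sigma_{n-1}(W_t-W_{t_{n-1}})$. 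Uniform ellipticity $\Lambda\succeq\kappa I$ immediately yields $\xi^T G_{t_n}\xi\ge\kappa h\sum_{k=m+1}^{n}|C_k\xi|^2$ for any unit vector $\xi$.

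The key reduction exploits the Cauchy--Schwarz bound $|C_k\xi|^{-2}\le|C_k^{-1}|^2$ together with the AM--HM inequality $\sum_k|C_k\xi|^2\ge(n-m)^2/\sum_k|C_k\xi|^{-2}$; the resulting right-hand side is independent of $\xi$ and yields
\begin{equation*}
|G_{t_n}^{-1}|\le\frac{h\sum_{k=m+1}^{n}|C_k^{-1}|^2}{\kappa(t_n-a)^2}.
\end{equation*}
Combined with Jensen's inequality $\big(\sum_k|C_k^{-1}|^2\big)^p\le(n-m)^{p-1}\sum_k|C_k^{-1}|^{2p}$, the whole reduction rests on the uniform moment bound $\sup_k\E|C_k^{-1}|^{2p}\le C(p,T)$; this then gives $\E|G_{t_n}^{-1}|^p\le Ch^p(n-m)^p(t_n-a)^{-2p}=C(t_n-a)^{-p}$.

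The principal obstacle is showing $\E|C_k^{-1}|^{2p}$ is bounded uniformly in $k,h$. A naive submultiplicative bound $|C_k^{-1}|\le\prod_j|A_j^{-1}|$ fails: individual $A_j=I+O(\sqrt h)$ can be near-singular on an event of exponentially small but strictly positive Gaussian probability, so $\E|A_j^{-1}|^p=\infty$ already for $p\ge 1$. One must instead exploit the smoothing from the full product: an $\epsilon$-net argument on the unit sphere reduces the bound to the fixed-direction estimate $\E|C_k\xi|^{-2p}\le C$ for each unit $\xi$, which follows by analyzing $\log|C_k\xi|^2=\sum_{j=k+1}^{n}\log(1+r_j)$ as a sum of $O(\sqrt h)$ martingale-like increments whose exponential moments are finite (log-normal-type concentration), using Assumption~\ref{ass:condition-drift-diffusion} to uniformly bound $\nabla b$ and $\nabla\sigma$. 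The non-grid case follows analogously, with the additive $\delta t$ contribution in the $k=n$ term directly handling the intervals where $t-a$ is comparable to $\delta t$.
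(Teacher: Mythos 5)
Your reduction is clean up to the point where, as you say, everything ``rests on the uniform moment bound $\sup_k\E|C_k^{-1}|^{2p}\le C(p,T)$'' --- but that bound is false, and this is a fatal gap rather than a technical one. Take $d=m=1$ (allowed under Assumption~\ref{ass:condition-drift-diffusion}, e.g.\ $\sigma=\sqrt{1+x^2}$). Then $C_k^{-1}=\prod_{j=k+1}^{n}A_j^{-1}$, and conditionally on $\mathscr{F}_{t_{n-1}}$ the last factor $A_n=1+b'h+\sigma'\,\delta W_{t_n}$ is Gaussian with nonzero variance wherever $\sigma'\neq0$, hence has strictly positive density at $0$, so $\E\big[|A_n|^{-q}\mid\mathscr{F}_{t_{n-1}}\big]=+\infty$ for every $q\ge1$. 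Since the remaining factors are positive a.s., $\E|C_k^{-1}|^{2p}=+\infty$ for all $p\ge\tfrac12$; in general dimension the inequality $|C_k^{-1}|\ge|\det C_k|^{-1/d}$ with $\det C_k=\prod_j\det A_j$ gives the same conclusion for $p\ge d/2$. The product does \emph{not} smooth this out: the singularity is produced by a single factor conditionally on all the others. Your fallback, the fixed-direction estimate $\E|C_k\xi|^{-2p}\le C$, fails for the same reason ($A_n\xi$ is an affine function of the Gaussian increment whose zero set has codimension at most $d$, so its $-2p$ moment diverges once $2p\ge d$), and the proposed log-normal concentration for $\sum_j\log(1+r_j)$ cannot repair this because the one-step conditional moments $\E[(1+r_j)^{-\lambda}\mid\mathscr{F}_{j-1}]$ are themselves infinite for $\lambda\ge d/2$. (A secondary issue: in $C_k\xi=A_{k+1}\cdots A_n\xi$ the factors act in reverse time order, so $\log|C_k\xi|^2$ is not a forward martingale plus drift; one would have to work with $C_k^T\xi$ to get adapted increments.)

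The root cause is that the AM--HM step discards the additive structure of $G_{t_n}=h\sum_kC_k^T\Lambda_{k-1}C_k$: this sum is uniformly positive definite even on the (exponentially rare but non-negligible) events where some $C_k$ degenerate, whereas your bound requires \emph{every} $C_k$ to be invertible with uniform $L^{2p}$ inverse moments. The paper's proof never inverts a random factor: it runs the one-step recursion $\tilde G_t=\delta t\,I+F_t^T\tilde G_{t_{n-1}}F_t$ and shows in Lemma~\ref{lem:matrix-recursion} that the additive regularization parameter $\lambda$ in $\E|(\lambda I+F^TGF)^{-1}|_F^p$ propagates as $\lambda\mapsto\lambda(1-C\delta t)^2+h$ up to a factor $e^{C\delta t}$ and an error $C\lambda^{-p}\delta t^{p+2}$, using the identity $(\lambda I+F^TGF)^{-1}=F^{-1}\big(\lambda(FF^T)^{-1}+G\big)^{-1}F^{-T}$, a stopping time truncating the Brownian increment, and It\^o's formula with Gr\"onwall; iterating yields $\lambda_n\gtrsim\min(t-a,c)$ and hence the $(t-a)^{-p}$ bound. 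If you wish to keep your sum representation, you would at least need to restrict the sum to a random set of indices $k$ for which the products are well conditioned, which in effect reconstructs the paper's recursive bookkeeping.
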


First of all, the spectral norm $|\cdot |$ does not have good properties for derivatives. We choose to use the Frobenius norm given by
\begin{gather}
|A|_F^2:=\sum_{i, j}|a_{ij}|^2,
\end{gather}
since it is easy to compute the derivatives of this norm and 
$|G_{t}^{-1}|\le |G_{t}^{-1}|_F$.
Note that we do not use the usual notation $\|\cdot\|_F$ for the Frobenius norm as we will reserve $\|\cdot\|$ for the norm over the probability space. A simple property we will use is that if $0\prec A \preceq B $
then
\begin{gather}
|A^{-1}|_F\ge |B^{-1}|_F.
\end{gather}
A difficulty in the computation below is that $(I+\epsilon C)^T \Phi (I+\epsilon C)\succeq (1-c\epsilon)\Phi$
does not hold for $c$ that is independent of the condition number of $\Phi$ if $C$ is simply a bounded matrix. The possible rotation could introduce the trouble.
To treat the possible rotation below, we first change $\Lambda$ to the identity matrix. To this end, we consider
\begin{gather}\label{eq:tildephi}
	\tilde{G}_t:=\delta t  I+F_{t}^T\tilde{G}_{t_{n-1}}F_{t},\quad \tilde{G}_{a}=0
\end{gather}
where
\begin{gather}
F_t:=I+\nabla b_{n-1}\delta t+\nabla\sigma_{n-1}\delta W_t.
\end{gather}
Then, it is not hard to see that 
\begin{gather}
|G_{t}^{-1}|_F\le \kappa^{-1} |\tilde{G}_t^{-1}|_F.
\end{gather}

We give the following key lemma. 
\begin{lemma}\label{lem:matrix-recursion}
Let $\lambda>0$ and $G$ be a positive definite matrix. For any $t\in ({a}, T]$, we set $k$ be the largest integer
such that $t_k<t$ (if $t=t_n$ for example, then $k=n-1$), then the following holds for some positive constants $C$
that is independent of $h$, $t$:
\begin{gather}
\E \left[ | (\lambda I+F_{t}^TG F_{t})^{-1}|_F^{p} | \mathcal{F}_{k}\right]
		\le e^{C\delta t} |(\lambda (1-C \delta t)^2I+G)^{-1}|_F^{p}+C\lambda^{-p}\delta t^{p+2}.
\end{gather}
\end{lemma}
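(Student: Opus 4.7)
My plan is to establish the bound via a decomposition based on the magnitude of the Brownian increment $\delta W_t$, combined with a perturbation expansion that exploits the identity
\[
\lambda I + F_t^T G F_t = F_t^T \bigl(\lambda(F_t F_t^T)^{-1} + G\bigr) F_t,
\]
which cleanly separates the randomness of $F_t$ from the spectral scale of $G$. Write $F_t = I + A + B$ where $A := \nabla b_{n-1}\delta t$ is deterministic given $\mathcal{F}_k$ and of size $O(\delta t)$, while $B := \nabla\sigma_{n-1}\delta W_t$ is mean-zero given $\mathcal{F}_k$ with $\E[\|B\|^2\mid\mathcal{F}_k]=O(\delta t)$. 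Denote $P := (\lambda I + F_t^T G F_t)^{-1}$ and split the sample space into the tail event $\mathcal{B} := \{|\delta W_t| > \delta t^{1/2-\gamma}\}$ (for a small $\gamma>0$) and its complement $\mathcal{G}$. On $\mathcal{B}$, use the crude bound $P \preceq \lambda^{-1}I$, yielding $|P|_F \le \sqrt{d}/\lambda$; the super-polynomial Gaussian tail $\P(\mathcal{B}\mid\mathcal{F}_k)\le C\exp(-c\delta t^{-2\gamma})$ then contributes at most $C\lambda^{-p}\delta t^{p+2}$, absorbing this piece into the error term of the lemma.

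On the good event $\mathcal{G}$, $\|F_t-I\|$ is small for $h$ small, hence $F_t$ is invertible and the factorization gives
\[
|P|_F \le \|F_t^{-1}\|^2\,\bigl|\bigl(\lambda(F_tF_t^T)^{-1} + G\bigr)^{-1}\bigr|_F.
\]
Set $\Xi := F_tF_t^T - I$; the expansion $(F_tF_t^T)^{-1} = I - \Xi + \Xi^2 - \cdots$ converges on $\mathcal{G}$. Writing $P_0 := (\lambda I + G)^{-1}$ we have $\lambda(F_tF_t^T)^{-1}+G = (\lambda I+G) + \lambda(-\Xi+\Xi^2-\cdots)$. The crucial point is that $\|\lambda P_0\|\le 1$, so the perturbation $\lambda(-\Xi+\Xi^2-\cdots)P_0$ has operator norm controlled purely by $\|\Xi\|/(1-\|\Xi\|)$, independent of the spectrum of $G$. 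Thus the Neumann series
\[
\bigl(\lambda(F_tF_t^T)^{-1}+G\bigr)^{-1} = P_0 + \lambda P_0 \Xi P_0 - \lambda P_0 \Xi^2 P_0 - \lambda^2 P_0 \Xi P_0 \Xi P_0 + \cdots
\]
converges uniformly in $G$ on $\mathcal{G}$.

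Next I take the conditional expectation by expanding $\Xi = A + A^T + AA^T + B + B^T + AB^T + BA^T + BB^T$. Terms linear in $B$ (hence odd in $\delta W_t$) vanish under $\E[\cdot\mid\mathcal{F}_k]$, while $\E[BB^T\mid\mathcal{F}_k]=\delta t\sum_i(\nabla_i\sigma_{n-1})(\nabla_i\sigma_{n-1})^T$ is of size $O(\delta t)$; together with the $A$-pieces this yields $\E[\Xi\mid\mathcal{F}_k]=O(\delta t)$. Expanding $\|F_t^{-1}\|^{2p}$ in powers of $\Xi$ similarly, all $O(\sqrt{\delta t})$ contributions cancel in conditional expectation, and the remainder is bounded by Gaussian moments $\E[|\delta W_t|^k\mid\mathcal{F}_k] = O(\delta t^{k/2})$. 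Combining these pieces carefully yields
\[
\E\bigl[|P|_F^p\,\mathbf{1}_{\mathcal{G}}\mid \mathcal{F}_k\bigr] \le (1 + C\delta t)\,|P_0|_F^p.
\]
Finally, since $(1-C\delta t)^2 \le 1$ gives the monotonicity $|P_0|_F \le |(\lambda(1-C\delta t)^2 I + G)^{-1}|_F$, and $1+C\delta t \le e^{C\delta t}$, combining with the tail estimate for $\mathcal{B}$ produces the lemma.

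The main obstacle is to obtain the factor $e^{C\delta t}$ rather than the naive $e^{C\sqrt{\delta t}}$: bounding $|P|_F$ directly by worst-case operator norms on $\mathcal{G}$ uses $\|F_t-I\| = O(\sqrt{\delta t})$ and loses half an order. The sharp rate requires tracking cancellations of $\delta W_t$ moments at each level of the expansion. A secondary difficulty is uniform convergence of the perturbation series across all admissible $G$ (which may be ill-conditioned or small); this is precisely what the factorization $F_t^T(\lambda(F_tF_t^T)^{-1}+G)F_t$ resolves, since the perturbation size is controlled by $\|\Xi\|$ alone.
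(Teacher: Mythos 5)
Your overall strategy is sound and shares the paper's two structural ingredients: the factorization $\lambda I + F_t^TGF_t = F_t^T(\lambda(F_tF_t^T)^{-1}+G)F_t$, which makes the perturbation size depend only on $\|\Xi\|$ because $\|\lambda(\lambda I+G)^{-1}\|\le 1$ uniformly in $G$, and a truncation of the Brownian increment producing the $C\lambda^{-p}\delta t^{p+2}$ error term. Where you genuinely diverge is in how the factor $e^{C\delta t}$ (rather than $e^{C\sqrt{\delta t}}$) is extracted: the paper runs a continuous interpolation $F(s)=I-\nabla b_k\,\delta t+\nabla\sigma_k W_s^{\tau}$ with a stopped Brownian motion in place of your event $\mathcal{G}$, applies It\^o's formula to $f(A(s),B(s))=(\mathrm{tr}(ABAB))^{p/2}$, and closes with Gr\"onwall; there the martingale terms --- precisely your odd-in-$\delta W_t$ terms --- vanish automatically under expectation, and each drift and quadratic-variation term is dominated by $Cf$ using $\lambda\|B\|\le C$. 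Your discrete Neumann/Taylor expansion achieves the same cancellation by hand; it is more elementary (no stochastic calculus), but it forces you to track the expansion order by order and to control the remainder beyond the quadratic terms, which the It\^o--Gr\"onwall argument handles for free.

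One step of your sketch would fail as literally written. You first bound $|P|_F\le\|F_t^{-1}\|^2\,\bigl|\bigl(\lambda(F_tF_t^T)^{-1}+G\bigr)^{-1}\bigr|_F$ and then propose to expand $\|F_t^{-1}\|^{2p}$ in powers of $\Xi$, claiming all $O(\sqrt{\delta t})$ contributions cancel in conditional expectation. For the operator norm they do not: $\|(I+B)^{-1}\|^2=1-\lambda_{\min}(B+B^T)+O(\|B\|^2)$, and $-\lambda_{\min}(B+B^T)$ is a maximum of mean-zero Gaussians indexed by unit vectors, so for $d\ge 2$ its conditional expectation is in general a strictly positive quantity of order $\sqrt{\delta t}$. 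That non-cancelling term puts you back at $e^{C\sqrt{\delta t}}$ per step and destroys the recursion in Theorem \ref{thm:Malliavin-matrix}. The cancellation you need only occurs if the conjugation is kept inside the Frobenius norm: write $M=F_t^{-1}\bigl(\lambda(F_tF_t^T)^{-1}+G\bigr)^{-1}F_t^{-T}=P_0+E$ and Taylor-expand $|P_0+E|_F^p$ in $E$. The first-order term $p|P_0|_F^{p-2}\,\mathrm{tr}(P_0E)$ is linear in $E$, so the parts of $E$ odd in $\delta W_t$ do vanish up to the super-polynomially small correction from restricting to $\mathcal{G}$, while every remaining contribution is $O(\delta t)\,|P_0|_F^p$ because each term of the Neumann series is bounded by $\|\Delta\|^k|P_0|_F$. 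With that repair --- consistent with the difficulty you yourself flag at the end --- your argument goes through.
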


\begin{proof}

Direct taking the derivative of $| (\lambda I+F_{t}^TG F_{t})^{-1}|_F^{p}$ in fact brings difficulty
due to the possible rotation in $F_t$. To resolve this, we will essentially use the identity
$F_t^{-1}(\lambda (F_tF_t^T)^{-1}+G )^{-1}F_t^{-T}$. This is the reason why we bound $\Lambda$ using $\kappa I$ above
since then $F_tF_t^T$ will be convenient to treat with.

To put the analysis rigorous, we use the continuous It\^o calculus. We need to consider the following stopping time
\begin{gather}
    \tau=\sup_{s<\delta t}\left\{\sup_{0<s' \le s} | W_{s'}|<\sqrt{\gamma_p \delta t |\log \delta t|}\right\}.
\end{gather}
Due to the reflection principle of the Brownian motion, it is clear that we can take $\gamma_p$ to be a constant independent 
of $\delta t$ such that
\begin{gather}
\P(\tau<\delta t)\lesssim \P(|W_{\delta t}|\ge \sqrt{\gamma_p \delta t |\log \delta t|}\})
\le (\delta t)^{p+2}.
\end{gather}

Let $W_s^{\tau}:=W_{s\wedge \tau}$, and  consider the continuous interpolation.
	\begin{gather}
		\begin{split}
			&\bar{f}(F(s))=|(\lambda I + F(s)^T GF(s))^{-1}|_F^p,\\
			& F(s)=I-\nabla b_{k} \delta t +\nabla\sigma_k W_{s}^{\tau}.
		\end{split}
	\end{gather}
Then, it suffices to estimate $\E [\bar{f}(F(\delta t))|\mathcal{F}_{k}]$ because
\[
\left|\E [\bar{f}(F(\delta t))|\mathcal{F}_{k}]-\E \left[ | (\lambda I+F_{t}^TG F_{t})^{-1}|_F^{p} | \mathcal{F}_{k}\right]
\right| \le C\lambda^{-p} (\delta t)^{p+2}.
\]
	
As we have mentioned, we pull $F(s)$ out and get
\[
(\lambda I + F(s)^T G F(s))^{-1} =F(s)^{-1}(\lambda (F(s)F(s)^T)^{-1}+G)^{-1} F(s)^{-T}.
\]
Since
\[
\tcb{|C A C^T|_F}=\sqrt{\mathrm{tr}(C A C^T CA C^T)}=\sqrt{\mathrm{tr}( A C^T CA C^TC)},
\]
we consider the following function
\begin{gather*}
	\bar{f}(F(s))=\left( \tr(A(s)(\lambda A(s)+G)^{-1}A(s)(\lambda A(s)+G)^{-1})\right)^{p/2}=:f(A(s), B(s)),
\end{gather*}
	where
	\begin{gather*}
		A(s):=(F(s)F(s)^T)^{-1}, \quad B(s):=(\lambda A(s)+G)^{-1}.
	\end{gather*}
	Next, we aim to apply It\^o's formula to the above function by noting that the functions are well-defined for the stopped process.

	It is clear that
	\begin{gather*}
		d(FF^T)=(\nabla\sigma_k dW_s^{\tau})F(s)^T+F(s)(\nabla\sigma_k dW_s^{\tau})^T
		+\sum_{j\ell}\nabla\sigma_{j\ell} \otimes (\nabla\sigma_k)_{j\ell} ds\wedge \tau.
	\end{gather*}
	Hence, one has
	\begin{gather*}
		dA(s)=-(FF^T)^{-1}\left((\nabla\sigma_k dW_s^{\tau})F(s)^T+F(s)(\nabla\sigma_k dW_s^{\tau})^T\right) (FF^T)^{-1}
		+\tilde{M}ds\wedge\tau,
	\end{gather*}
	where  $\tilde{M}$ is a bounded matrix since $F(s)$ is also uniformly
  bounded  (the concrete expression is not important to us).
	Then, one can compute
	\begin{gather*}
		dB(s)=-\lambda B(s)dA(s)B(s)+\lambda^2B(s) \tilde{N} B(s) ds\wedge \tau,
	\end{gather*}
	with
	\[
	|\tilde{N}|\le C|B(s)|,\quad   |\tilde{N}|_F \le C|B(s)|_F ,
	\]
	where $C$ is independent of $h$ and $\lambda$. Here, $|\cdot |$ indicates the 2-operator norm of the matrix.

For $f(A, B)=(\tr(ABAB))^{p/2}$ with $A, B$ being positive definite matrices, one has
	\begin{gather*}
		\frac{\partial f}{\partial a_{ij}}=p (\tr(ABAB))^{p/2-1}\tr(E_{ij}BAB)=p (\tr(ABAB))^{p/2-1}(BAB)_{ij},
	\end{gather*}
where $E_{ij}$ denotes the matrix that is $1$ only at the $(i, j)$-th entry.   Since $f(\cdot, \cdot)$ is a symmetric function, one has
	\begin{gather*}
		\frac{\partial f}{\partial b_{ij}}=p (\tr(ABAB))^{p/2-1}(ABA)_{ij}.
	\end{gather*}
The second-order derivatives are some fourth-order tensors, given by the following.
	\begin{gather*}
		\begin{split}
			\frac{\partial f^2}{\partial a_{\alpha\beta}\partial a_{ij}}
			&=p(p-2) (\tr(ABAB))^{p/2-2}(BAB)_{ij}(BAB)_{\alpha\beta}
			+p (\tr(ABAB))^{p/2-1}(BE_{\alpha\beta}B)_{ij},\\
			\frac{\partial f^2}{\partial b_{\alpha\beta}\partial b_{ij}}
			&=p(p-2) (\tr(ABAB))^{p/2-2}(ABA)_{ij}(ABA)_{\alpha\beta}
			+p (\tr(ABAB))^{p/2-1}(AE_{\alpha\beta}A)_{ij},\\
			\frac{\partial f^2}{\partial b_{\alpha\beta}\partial a_{ij}}
			&=p(p-2) (\tr(ABAB))^{p/2-2}(BAB)_{ij}(ABA)_{\alpha\beta}\\
			&\quad\quad+p (\tr(ABAB))^{p/2-1}(E_{\alpha\beta}AB)_{ij}
			+p (\tr(ABAB))^{p/2-1}(BAE_{\alpha\beta})_{ij}.
		\end{split}
	\end{gather*}
	
	By It\^o's formula, one has
	\begin{multline*}
		f(A(s), B(s))=f(A(0), B(0))+p \int_0^{s\wedge\tau} (\tr(ABAB))^{p/2-1} BAB:dA(s_1)\\
		+p\int_0^{s\wedge\tau} (\tr(ABAB))^{p/2-1} ABA: dB(s_1)
		+\frac{1}{2}\int_0^{s\wedge\tau}\Big(\frac{\partial f^2}{\partial a_{\alpha\beta}\partial a_{ij}} d[a_{ij}, a_{\alpha\beta}]_{s_1}\\
		+2\frac{\partial f^2}{\partial b_{\alpha\beta}\partial a_{ij}}d[a_{ij}, b_{\alpha\beta}]_{s_1}
		+\frac{\partial f^2}{\partial b_{\alpha\beta}\partial b_{ij}} d [b_{ij}, b_{\alpha\beta}]_{s_1}\Big),
	\end{multline*}
	where $[X, Y]_s$ denotes the quadratic covariant process for $X$ and $Y$.
	
	Taking expectation on both sides, the martingale terms involving It\^o's integral vanish so we focus on other terms.
	For the term involving $dA$, we have
	\begin{multline*}
		\E\int_0^{s\wedge\tau} (\tr(ABAB))^{p/2-1} BAB:dA(s_1)
		=\E\int_0^{s\wedge\tau} (\tr(ABAB))^{p/2-1} BAB:\tilde{M}ds\wedge\tau\\
		\le \E\int_0^{s\wedge\tau} (\tr(ABAB))^{p/2-1} \tr(BAB)ds\wedge\tau
		\le \E\int_0^{s\wedge\tau} (\tr(ABAB))^{p/2}\|A^{-1}\|ds\wedge\tau.
	\end{multline*}
	Here, we have used the inequality $\tr(PD)\le \tr(P)\|D\|$ if $P$ is a symmetric positive definite matrix.
	Since  $  \|A^{-1}\|$ is uniformly bounded, and $f$ is nonnegative, this term is controlled by 
	\[
	\E \int_{0}^{s\wedge\tau}f(A(s_1), B(s_1)) ds_1
	\le \int_{0}^s \E f(A(s_1), B(s_1)) ds_1.
	\]
	Similarly, the second term for $dB$ is controlled by
	\begin{multline*}
		\E\int_0^{s\wedge\tau} (\tr(ABAB))^{p/2-1} ABA:dB(s_1)\\
		=\E\int_0^{s\wedge\tau} (\tr(ABAB))^{p/2-1} ABA:B(-\lambda  \tilde{M}+\lambda^2\tilde{N})B ds_1.
	\end{multline*}
	Here, we note that $ABA:BDB=ABAB: DB$,
	and thus
	\begin{multline*}
		\E\int_{0}^{s\wedge\tau} (\tr(ABAB))^{p/2-1} ABA:B(-\lambda  \tilde{M}+\lambda^2\tilde{N})B ds_1\\
		\le \E\int_{0}^{s\wedge\tau} (\tr(ABAB))^{p/2} (\lambda+\lambda^2\|\tilde{N}\|)\|B\| ds_1.
	\end{multline*}
	Since  $\lambda \|B\|\le C$ which is independent of $\lambda$, $G$ and $h$, this term is also fine.

	For quadratic terms, the terms involving $dA$ are fine as we just use $BAB=A^{-1}(ABAB)$.
	There are several new terms. In $\frac{\partial f^2}{\partial b_{\alpha\beta}\partial b_{ij}}$, one needs to deal with
	\begin{gather*}
		\E \int_{0}^{s\wedge \tau}(\tr(ABAB))^{p/2-1}(AE_{\alpha\beta}A)_{ij} d[B, B]_{ij \alpha\beta}.
	\end{gather*}
	By the formula for $dB$ it is easy to see that 
	\[
	d[B, B]_{ij \alpha\beta}
	=\lambda^2\sum_{\theta} (BM_{\theta}B)_{ij}(BN_{\theta}B)_{\alpha\beta}\, ds\wedge\tau
	\]
	for some bounded matrices $M_{\theta}$ and $N_{\theta}$ where $\theta$ is for the labeling of different components of
	the Wiener process. Then, one actually has
	\[
	\sum_{\theta}\E \int_{0}^{s\wedge \tau}(\tr(ABAB))^{p/2-1}  \lambda^2 (ABN_{\theta}BA):BM_{\theta}B\,ds_1.
	\]
	Since $
	(ABN_{\theta}BA):BM_{\theta}B=BABN_{\theta}BAB:M_{\theta}$,
	we have
	\[
	(ABN_{\theta}BA):BM_{\theta}B\le
	\tr(BABN_{\theta}BAB)\|M_{\theta}\|
	\le \tr(BAB)\|M_{\theta}\|\|N_{\theta}\|\|B\|^2\|A\|.
	\]
	Moreover, since $BAB$ is symmetric positive definite
	\[
	\tr((BAB)^2)=\tcb{|BAB|_F}^2\le (\tr(BAB))^2\le \tr(ABAB)^2\|A^{-1}\|^2.
	\]
	 Since $\lambda^2\|B\|^2\le C$ independent of $\lambda$ and $G$ and $\|A^{-1}\|$ is bounded, this term is also fine.
	The term for $(BAE_{\alpha\beta})_{ij}$ is pretty much similar and we skip.
	
	Overall, we have established that
	\[
	\E f(A(s), B(s))\le f(A(0), B(0))+C \int_{0}^s\E f(A(s_1), B(s_1))\,ds_1.
	\]
	Applying Gr\"onwall's inequality, one has
	\[
	\E f(A(\delta t), B(\delta t))\le e^{C\delta t}f(A(0), B(0)) \le e^{C\delta t}\tcb{|A(0)|_F^p}.
	\]
	Note here that the constant $C$ has been enlarged (recall that $C$ is a generic constant). 
	Using the formula for $A(0)$, the desired claim follows since
	\[
	(I+\nabla b \delta t)(I+\nabla b \delta t)^T \succeq (1-C \delta t)^2I
	\]
	for some $C$ independent of $\delta t$.
\end{proof}

Now we apply Lemma \ref{lem:matrix-recursion} to prove Theorem \ref{thm:Malliavin-matrix}.
\begin{proof}
Recall that $a=t_k$ for some nonnegative integer $k$. Without loss of generality, we can assume $a=0$.
As we have mentioned, we only need to show for $p>1$ that
\[
\E |\tilde{G}_t^{-1}|_F^p\le Ct^{-p},
\] 
where $\tilde{G}$ is defined in \eqref{eq:tildephi}.

If $t\le h$, Lemma \ref{lem:matrix-recursion} is already the result. We consider $t>h$.
The result then follows by applying Lemma \ref{lem:matrix-recursion} recursively.
In fact, one first takes $\lambda_0=\delta t$ and then obtain ($n\ge 2$)
\[
\begin{split}
\E |\tilde{G}_t^{-1}|_F^p&\le e^{C\delta t}\E |(\delta t(1-C\delta t)^2I+\tilde{G}_{t_{n-1}})^{-1}|_F^{-p}
+C\delta t^2 \\
&=e^{C\delta t}\E |((\delta t(1-C\delta t)^2+h)I+F_{t_{n-1}}^T\tilde{G}_{t_{n-2}}F_{t_{n-1}})^{-1}|_F^{-p}
+C\delta t^2,
\end{split}
\]
where we have used the recursion relation in \eqref{eq:tildephi}.
Clearly, one can then set $\lambda_1=h+\delta t(1-C\delta t)^2$ and repeat the argument.
In general, one has $\lambda_k=(1-C h)^2\lambda_{k-1}+h$ for $k\ge 2$.
Then, one eventually has 
\[
\E |\tilde{G}_t^{-1}|_F^p\le e^{C(\delta t+(n-1)h)}
|\lambda_n I|_F^{-p}+C((n-1)h^2+\delta t^2).
\]
Then, it is easy to see that
\[
\lambda_n \ge h\sum_{i=0}^{n-1}(1-Ch)^{2i}\ge \frac{1-e^{-2C(n-1)h}}{2C-C^2h}.
\]
Since $t/[(n-1)h]\le 2$ and $ht\le C t^{-p}$, the result then follows.
\end{proof}

\subsection{Other auxiliary results}\label{subsec:proofother}

In this subsection, we close the argument by proving Proposition \ref{eq:otherMalliavin}.
That is, we need to give the desired bounds for the $p$-th moments of $DY^h$, $DDY^h$, $DDDY^h$, $J^h-I$, $DJ^h$, $DDJ^h$, $H^h$, $DH^h$, $DG$, $DDG$, $\delta(DY^h)$ and $D\delta(DY^h)$.

\begin{proof}[Proof of Proposition \ref{eq:otherMalliavin}]
The proof will follow the order of derivatives. In detail, we divide terms in Proposition \ref{eq:otherMalliavin} into the following classes:
\begin{itemize}
    \item Class 1 (first-order derivatives): $DY^h$, $J^h-I$,
    \item Class 2 (second-order derivatives): $DDY^h$, $DJ^h$, $H^h$, $DG$, $\delta(DY^h)$,
    \item Class 3 (third-order derivatives): $DDDY^h$, $DDJ^h$, $DH^h$, $DDG$, $D\delta(DY^h)$.
\end{itemize}

The reason for this classification is that estimates for terms in class $i$ rely on estimates for terms in Class $j$ ($j < i$). Without loss of generality, we can assume that $a=0$. If $t\le h$, these estimates are almost obvious (since it is one-step Euler).
Hence, we focus on $t>h$. Moreover, we can prove the results for $t_n=nh$. If the results have been proved for $t_n$, the general results for $t=t_n+\delta t$ with $\delta t\le h$ are obvious.
We will use the notations in section \ref{subsec:inverseMmatrix}. Moreover, the notation for the polynomial $\mathscr{P}_{\nu}(y)$ is generic and the order $\nu$ may vary from line to line. After all estimates are complete, we can take the maximum of these orders to redefine $\nu$.

\vskip 0.2 in

\noindent \textbf{STEP 1: Estimate terms in Class 1}

\noindent \textbf{Estimate for $DY^h$:}

By definition, one has
\begin{equation}\label{eq:DXdef}
D_r Y^h_{t} = \sigma(\lfloor r\rfloor , Y^h_{\lfloor r\rfloor})^T \textbf{1}_{\{r<t \}}
   + \sum_{j=0}^{n-1} D_r Y^h_{t_j} \cdot \left(h \nabla b_j + \nabla\sigma_j \cdot \delta W_{t_{j+1}}\right).
\end{equation}
Clearly, since $\nabla b$ is bounded, the estimate of the drift term is straightforward:
\begin{gather*}
    \mathbb{E}\left|\sum_{j=0}^{n-1} h D_r Y^h_{t_j} \cdot \nabla_{x} b_j\right|^p
    \leq Ch^p n^{p-1} \sum_{j=0}^{n-1}\mathbb{E}|D_r Y^h_{t_j}|^p \leq t^{p-1}h\sum_{j=0}^{n-1}\mathbb{E}|D_r Y^h_{t_j}|^p.
\end{gather*}
Note that the bound can in fact be improved to $(t-r)^{p-1}$, but we do not need this sharper bound.
For the Brownian motion term, we apply the Burkholder-Davis-Gundy (BDG) inequality (noting $\nabla \sigma_j(x)$ is bounded) to get
\[
\E|\sum_{j=0}^{n-1}D_r Y^h_{t_j}\cdot\nabla\sigma_j \cdot \delta W_{t_{j+1}}|^p\le C \E\left|\int_0^t|D_r Y^h_{\lfloor s \rfloor}|^2ds \right|^{p/2}
\le C\sum_{j=0}^{n-1}\mathbb{E}|D_rY^h_{t_j}|^ph,
\]
since $p\ge 2$. Hence, noting the Lipschitz assumption of $\sigma$, as well as the moment bound in Lemma \ref{lem:bound-moment}, we have
\begin{equation}\label{eq:gronwall1}
\mathbb{E}|D_r Y^h_{t_n}|^p \leq C\mathscr{P}_p (y) + Ch\sum_{j=0}^{n-1}\mathbb{E}|D_r Y^h_{t_j}|^p.
\end{equation}
The result then follows by the discrete Gr\"onwall's inequality.

\noindent \textbf{Estimate for $J^h-I$:}\\
By definition, $J_t^h$ satisfies
\begin{gather}\label{eq:Jdef1}
    J^h_t =I+ \sum_{j=0}^{n-1}J^h_{t_j}\cdot \Big(h\nabla b_j + \nabla \sigma_j\cdot \delta W_{t_{j+1}}\Big).
\end{gather}
so that 
\begin{gather}\label{eq:Jdef2}
    J^h_t-I = \sum_{j=0}^{n-1}(J^h_{t_j}-I  )\cdot \Big(h\nabla b_j + \nabla \sigma_j\cdot \delta W_{t_{j+1}}\Big)
    +\sum_{j=0}^{n-1}(h\nabla b_j + \nabla \sigma_j\cdot \delta W_{t_{j+1}}) .
\end{gather}
It is straightforward to see that (this is the Euler scheme of a simple SDE)
\[
\E|\sum_{j=0}^{n-1}(h\nabla b_j + \nabla \sigma_j\cdot \delta W_{t_{j+1}})|^p\le C
t^{p/2}.
\]
Using the same technique as we do for $D_rY_t^h$, the result then follows.

\vskip 0.2 in

\noindent \textbf{STEP 2: Estimate terms in Class 2}

\noindent \textbf{Estimate for $\delta(DY^h)$:}\\
By \eqref{eq:DXdef}, \eqref{eq:skorokhod} and the properties of the divergence, one has
\begin{equation}\label{eq:deltaDdef}
\begin{aligned}
\delta(DY^h_{t}) &= \int_0^t \sigma(\lfloor r\rfloor , Y^h_{\lfloor r\rfloor})^T dW_r+ \sum_{j=0}^{n-1} \delta \left(D Y^h_{t_j}\right)\cdot \left(h \nabla_{x} b_j + \nabla_{x} \sigma_j\cdot \delta W_{t_{j+1}}\right) \\
&\quad + \sum_{j=0}^{n-1} \tcb{\langle DY^h_{t_j}, D\left(h \nabla_{x} b_j + \nabla_{x} \sigma_j\cdot \delta W_{t_{j+1}}\right) \rangle_H}
\end{aligned}
\end{equation}
Using the polynomial growth conditions in Assumption \ref{ass:condition-drift-diffusion} and the moment bound in Lemma \ref{lem:bound-moment},
 the $L^p$ norm of the first term is clearly bounded. Note that by Lemma \ref{lem:bound-moment}, the bound is dependent of the initial value $ \mathscr{P}_{p}(y)$. Hence, it suffices to estimate the $L^p$ norm of the third term on the right-hand side of \eqref{eq:deltaDdef}. We note that
\begin{multline*}
 D_r\left(h\nabla_{x} b_j + \nabla_{x} \sigma_j\cdot \delta W_{t_{j+1}}\right)
    = hD_r Y^h_{t_j} \cdot \nabla^2_{xx}b_j 
    + D_r Y^h_{t_j}\cdot \nabla^2_{xx} \sigma_j \cdot \delta W_{t_{j+1}}+\nabla_{x}\sigma_j^T\textbf{1}_{[t_j,t_{j+1})}(r).
\end{multline*}
Clearly, 
\begin{equation*}
    \tcb{\langle DY_{t_j}^h, \nabla_{x} \sigma(t_j,Y^h_{t_j})^T\textbf{1}_{[t_j,t_{j+1})} \rangle_H} = 0,
\end{equation*}
because $D_rY_{t_j}^h=0$ for $r> t_j$. 
Then, using the polynomial growth conditions in \ref{ass:condition-drift-diffusion} and  the moment bound in Lemma \ref{lem:bound-moment}, it is straightforward to show that
\[
\E |\sum_{j=0}^{n-1} \tcb{\langle DY^h_{t_j}, D\left(h \nabla_{x} b_j + \nabla_{x} \sigma_j\cdot \delta W_{t_{j+1}}\right) \rangle_H}|^p
\le C\mathscr{P}_{\nu}(y) t^{p/2},
\]
where $\nu = O(1 + pq)$ and $q$ is the parameter in Assumption \ref{ass:condition-drift-diffusion}. The rest of the argument is the same as we have done for $DY_t^h$. 
We skip the details. Note that in the last step, we still need the discrete Gr\"onwall's inequality, and the coefficient in the forcing term is independent of $y$ (similar with \eqref{eq:gronwall1}), since we have assumed the global boundedness of $\nabla_x b$ and $\nabla_x \sigma$. Consequently, the final $L^p$ norm is still bounded by some polynomial $\mathscr{P}_{\nu}(y)$. This argument is also true for other estimates.

\noindent \textbf{Estimate for $DJ^h$ and $DDY^h$:}\\
By definition and \eqref{eq:Jdef1},
\begin{equation}\label{eq:DJdef}
D_r J^h_t = \sum_{j=0}^{n-1} D_rJ^h_{t_j}\cdot \Big(h\nabla b_j + \nabla \sigma_j \cdot \delta W_{t_{j+1}}\Big) + R,
\end{equation}
where
\[
R= \sum_{j=0}^{n-1}J_{t_j}^h\nabla\sigma_j 1_{[t_j, t_{j+1})}(r)
     + \sum_{j=0}^{n-1}\Big(hD_rY^h_{t_j}\cdot \nabla^2_{xx}b_j\cdot J^h_{t_j} )
    + J^h_{t_j}\cdot D_rY^h_{t_j} \cdot \nabla^2_{xx} \sigma_j \cdot \delta W_{t_{j+1}}   \Big).
\]
Noting the polynomial growth conditions in Assumption \ref{ass:condition-drift-diffusion}, the moment bound in Lemma \ref{lem:bound-moment} and the previous bounds, one can estimate $\E |R|^p$. Repeating the same argument, one has the desired estimate for $DJ^h$.
Using \eqref{eq:DXdef}, one finds that $D_rD_s Y^h_t$ has a similar structure  with $D_rJ_t^h$.
The argument is similar and the $L^p$ norm is uniformly bounded independent of $r$ and $s$.
We omit the details.

\noindent \textbf{Estimate for $DG$:}

By definition of $G$, one has
$D_rG_t=2\int_{{a}}^t \langle D_r^{\ell}D_uY^h_{t}, D_uY^h_{t}\rangle_{\ell=1:m} du$.
By the estimate of $DY^h$, $DDY^h$ (they have bounded $p$-th moments), one thus has by H\"older's inequality that
\begin{equation*}
    \mathbb{E}|D_rG_t|^p \leq C\mathscr{P}_{\nu}(y) t^p,
\end{equation*}
where $C$ is a positive constant independent of $t$, $h$, $r$.

\noindent \textbf{Estimate for $H^h$:}

Taking one more $y$-derivative in \eqref{eq:Jdef1}, one has
\begin{equation}\label{eq:Hdef}
H^h_t= \sum_{j=0}^{n-1} H^h_{t_j}\cdot \Big(h\nabla_{x} b_j+ \nabla_{x} \sigma_j \cdot \delta W_{t_{j+1}} \Big) + R_H,
\end{equation}
with
\begin{equation}
     R_H = \sum_{j=0}^{n-1} h J^h_{t_j}\otimes J^h_{t_j}:\nabla^2_{xx} b_j  + \sum_{j=0}^{n-1} J^h_{t_j}\otimes J^h_{t_j}:\nabla^2_{xx} \sigma_j\cdot \delta W_{t_{j+1}}.
\end{equation}
Here,
\[
J^h_{t_j}\otimes J^h_{t_j}:\nabla^2_{xx} :=\sum_{\ell, k} J_{\cdot \ell}\otimes J_{\cdot k}\partial_{x_{\ell}}\partial_{x_k}.
\]
As before, the $p$-th moment of $\sum_{j=0}^{n-1} (\cdots) \cdot  (W_{t_{j+1}} - W_{t_j})$ is bounded by $C\mathscr{P}_{\nu}(y)t^{\frac{p}{2}}$ by BDG inequality. Consequently, $\mathbb{E}|R_H|^p \leq C\mathscr{P}_{\nu}(y) t^{\frac{p}{2}}$.
Using the same argument as before, and noting the polynomial growth conditions in Assumption \ref{ass:condition-drift-diffusion} and the moment bound in Lemma \ref{lem:bound-moment}, one thus has
\begin{equation*}
\mathbb{E}|H^h_t|^p \le C\mathscr{P}_{\nu}(y) t^{\frac{p}{2}}.
\end{equation*}

\vskip 0.2 in

\noindent \textbf{STEP 3: Estimate terms in Class 3}

\noindent \textbf{Estimate for $DDJ^h$, $DH^h$, $DDDY^h$ and $DDG$}

These are pretty much similar to what we estimate for $DJ_t^h$. They of course would be much more tedious but the argument is similar, with the various remainder terms $R$ involving some terms as before, which have bounded $L^p$ norm.
We skip the details.

Using the definition of $G$ and the estimates we have for $DDY_t^h$ and $DDDY_t^h$, it is easy to see  $\E|D_sD_rG_t|^p \le C t^p$ where $C$ is independent of the time variables for the two Malliavin derivatives, similar as we have done for $DG$.

\noindent \textbf{Estimate for $D\delta(DY^h)$:}

Taking the Malliavin derivative in \eqref{eq:deltaDdef}, 
\begin{equation}\label{eq:ddeltaDdef}
D_r \delta(DY^h_{t})= \sum_{j=0}^{n-1} D_r \delta(DY^h_{t_j})\cdot  \Big( h\nabla_x b_j+\nabla_{x} \sigma_j \cdot \delta W_{t_{j+1}}\Big) + R_{DdD}.
\end{equation}
where
 \begin{multline*}
R_{DdD}= D_r\int_0^t \sigma(\lfloor r\rfloor , Y^h_{\lfloor r\rfloor})^T dW_r + 
\sum_{j=0}^{n-1}(\delta(DY_{t_j}^h)\cdot D_r^{\ell}(h\nabla b_j+\nabla\sigma_j \delta W_{t_{j+1}}))_{\ell=1:m}\\
+\sum_{j=0}^{n-1}D_r\langle DY_{t_j}^h, h DY_{t_j}^h\cdot\nabla^2b_j+DY_{t_j}^h\cdot\nabla^2\sigma_j \delta W_{t_{j+1}}\rangle_H.
 \end{multline*}
This time, $D_r$ hitting on the Brownian motions will contribute some $O(1)$ terms.
By expanding out the various derivatives, one can in fact find that
$\E|R_{DdD}|^p\le C\mathscr{P}_{\nu}(y)$.
Using the same argument as before and by the Gr\"onwall inequality, noting the polynomial growth conditions in Assumption \ref{ass:condition-drift-diffusion} and the moment bound in Lemma \ref{lem:bound-moment}, one has the desired estimate.
\end{proof}

\section{Sharp error estimate for Euler's scheme for SDEs with multiplicative noise}\label{sec:mainresult}

One crucial application of the estimates of the logarithmic numerical density derived in Section \ref{sec:nablalog} is to obtain a sharp error bound of Euler's scheme under the current settings. In this section, we establish the error bound in terms of the relative entropy while deferring the proofs for some technical estimates in later sections. The main idea is to use the time evolution equations for the numerical density.

\subsection{Statement of the sharp error order results}

Our another crucial finding in this paper is the following estimate for the relative entropy.
\begin{theorem}\label{thm:errentropy}
Let $T>0$ be the terminal time. Suppose Assumptions \ref{ass:condition-drift-diffusion} and 
\ref{ass:Uniform bound} hold. Consider the density $\hat{\rho}_t$
for the time continuous interpolation of the Euler method in \eqref{eq:Fokker-eq-numerical} and the density $\rho_t$ for the SDE in \eqref{eq:Fokker-eq-exact}. Then, there exist constants $h_0>0$ and $C(T)>0$ such that for all $h\le h_0$  the following holds:
\begin{equation}
\cH( \hat{\rho}_{t} |\rho_{t} ) \leq C h^2, \quad \forall t\in [0, T].
\end{equation}
\end{theorem}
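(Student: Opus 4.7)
The plan is to bound the time derivative $\tfrac{d}{dt}\cH(\hat\rho_t|\rho_t)$ directly and close via Gr\"onwall. First, I would write out the Fokker--Planck equations for both densities. The law $\rho_t$ of the exact SDE \eqref{eq:SDE_multiplicative} satisfies the Kolmogorov forward equation with drift $b$ and diffusion $\Lambda$; by conditioning the time-frozen coefficients of the continuous interpolation \eqref{eq:continuous-Euler-method} on the current state, the law $\hat\rho_t$ satisfies an analogous equation with effective coefficients
\[
\hat b(t,x) := \E[b(\lfloor t\rfloor, X_{\lfloor t\rfloor}^h)\mid X_t^h=x],\qquad \hat\Lambda(t,x) := \E[\Lambda(\lfloor t\rfloor, X_{\lfloor t\rfloor}^h)\mid X_t^h=x].
\]

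A standard computation of $\tfrac{d}{dt}\int \hat\rho_t\log(\hat\rho_t/\rho_t)\,dx$ using the two PDEs and integration by parts in $x$ then yields a decomposition
\[
\tfrac{d}{dt}\cH(\hat\rho_t|\rho_t) = -\tfrac{1}{2}I_\Lambda(t) + E_b(t) + E_\Lambda(t),
\]
where $I_\Lambda(t) := \int \hat\rho_t\,\langle \nabla\log(\hat\rho_t/\rho_t), \Lambda\nabla\log(\hat\rho_t/\rho_t)\rangle\,dx \ge \kappa\int \hat\rho_t|\nabla\log(\hat\rho_t/\rho_t)|^2\,dx$ is the weighted Fisher information, $E_b$ is linear in $\nabla\log(\hat\rho_t/\rho_t)$ carrying the drift discrepancy $\hat b-b$, and $E_\Lambda$ collects contributions from $\hat\Lambda-\Lambda$ paired against up to second-order derivatives of $\log(\hat\rho_t/\rho_t)$. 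The objective is to show $|E_b(t)|+|E_\Lambda(t)|\le \tfrac{\kappa}{4}\int\hat\rho_t|\nabla\log(\hat\rho_t/\rho_t)|^2dx+Ch^2$, so that $\tfrac{d}{dt}\cH\le Ch^2$ and integration (with $\cH(\hat\rho_0|\rho_0)=0$ since $X_0^h\sim\rho_0$) yields the claim.

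To achieve the sharp $h^2$ order, I would apply an It\^o expansion on $[\lfloor t\rfloor,t]$ to write
\[
b(\lfloor t\rfloor, X_{\lfloor t\rfloor}^h)-b(t,X_t^h) = -\int_{\lfloor t\rfloor}^t (\mathcal{L}_s b)(s,X_s^h)\,ds - \int_{\lfloor t\rfloor}^t \nabla b(s,X_s^h)\sigma(\lfloor s\rfloor,X_{\lfloor s\rfloor}^h)\,dW_s,
\]
and an analogous expansion for $\Lambda$. The deterministic integrals contribute pointwise $O(h)$, which pair against $\nabla\log(\hat\rho_t/\rho_t)$ via Cauchy--Schwarz and Young's inequality to yield $\tfrac{\kappa}{8}I_\Lambda+Ch^2$. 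The martingale pieces do not vanish under conditioning on $X_t^h=x$, so I would treat them by spatial integration by parts against $\hat\rho_t$; one such integration (in $E_b$) produces a factor of $\nabla\log\hat\rho_t$, while two (in $E_\Lambda$) produce factors of $\nabla^2\log\hat\rho_t$ together with $(\nabla\log\hat\rho_t)^{\otimes 2}$. These factors are controlled in $L^{p_2}$ and $L^{p_3}$ respectively by Theorem \ref{thm:derive-density}, and combined with H\"older's inequality and the moment bound of Lemma \ref{lem:bound-moment} they close the estimate.

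The main obstacle is the multiplicative-noise contribution $E_\Lambda$, which is absent in the additive case of \cite{mou2022improved}. Because it carries two spatial derivatives of the log-ratio, naive manipulations only give $O(h)$; the sharp order requires the combined strength of a higher-order It\^o--Taylor expansion of $\Lambda(\lfloor t\rfloor,X_{\lfloor t\rfloor}^h)-\Lambda(t,X_t^h)$, the $L^{p_3}$ bound on $\nabla^2\log\hat\rho_t(X_t^h)$ from Theorem \ref{thm:derive-density}, and a careful Young-inequality split that trades the quadratic-in-$\nabla\log(\hat\rho/\rho)$ cross terms against $I_\Lambda(t)$. This is precisely the place where the integrated second-order log-density estimates of Section \ref{sec:nablalog} are indispensable, and without them the argument would degrade to the $O(h)$ rate one gets from the strong error alone.
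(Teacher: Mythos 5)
Your proposal is correct and follows essentially the same route as the paper: differentiate the relative entropy using the two Fokker--Planck equations, absorb the cross terms into the $\Lambda$-weighted Fisher information via Young's inequality, and upgrade the naive $O(\sqrt h)$ size of $\hat b-b$ and $\hat\Lambda-\Lambda$ to $O(h)$ by integrating the conditional martingale fluctuation by parts so that the cost is a factor of $\nabla\log\hat\rho$ (resp.\ $\nabla^2\log\hat\rho$), controlled by the $L^{p_2}$ and $L^{p_3}$ estimates of Section \ref{sec:nablalog}, with the divergence-of-$\hat\Lambda$ term correctly identified as the genuinely new difficulty of the multiplicative case. The only cosmetic difference is that you organize the cancellation through an It\^o expansion plus integration by parts in $x$, whereas the paper Taylor-expands in space and integrates by parts in the backward variable $y$ against the explicit Gaussian transition kernel via Bayes' formula; the two mechanisms are equivalent.
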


As a consequence of the estimate for the relative entropy, we can obtain the bounds for the densities under some standard distances.
For example, the total variation distance can be bounded directly using the Pinsker's inequality as discussed in section \ref{subsec:relativeentropy}. 

\tcb{Regarding the Wasserstein distances, using the transport inequalities introduced in Section \ref{sec:prelim} above, one directly has the following claims.}
\begin{corollary}
\tcb{Consider the probability density functions 
$ \rho_{t} $, $ \hat{\rho}_{t} $ for 
$ X_t $, $ X^{h}_{t} $
defined in \eqref{eq:Fokker-eq-exact}
, \eqref{eq:Fokker-eq-numerical} with the same assumptions. If $\rho_t$ satisfies \eqref{eq:W1condition}, then}
\begin{equation}
	\tcb{W_1( \hat{\rho}_{t}, \rho_t) \leq C h.}
\end{equation}
\tcb{Also, if $\rho_t$ satisfies a log-Sobolev inequality with a uniform constant for $t\in [0, T]$ then}
\begin{equation}
	\tcb{W_2( \hat{\rho}_{t}, \rho_t) \leq C h.}
\end{equation}
\end{corollary}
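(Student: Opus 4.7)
The proof is essentially a direct combination of the sharp relative entropy bound in Theorem \ref{thm:errentropy} with the transport inequalities recalled in Section \ref{subsec:relativeentropy}. The plan is to take $\mu = \hat{\rho}_t$ and $\nu = \rho_t$ in each inequality and then invoke $\cH(\hat{\rho}_t \mid \rho_t) \le Ch^2$, so that the square root produces the desired first order $O(h)$.

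For the $W_1$ bound, I would apply the weighted Csisz\'ar--Kullback--Pinsker inequality
\begin{equation*}
W_1(\hat{\rho}_t, \rho_t) \le C_{\rho_t}\sqrt{\cH(\hat{\rho}_t \mid \rho_t)},
\end{equation*}
where $C_{\rho_t}$ is the constant defined in \eqref{eq:W1condition}. Under the assumed tail condition on $\rho_t$, $C_{\rho_t}$ is finite (and, on the finite horizon $[0,T]$, one would check that $C_{\rho_t}$ is uniformly bounded in $t$, which follows from standard moment bounds for the exact SDE under Assumption \ref{ass:condition-drift-diffusion}). Combining with Theorem \ref{thm:errentropy} then gives $W_1(\hat{\rho}_t,\rho_t) \le C_{\rho_t}\cdot \sqrt{C}\, h \le C'h$.

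For the $W_2$ bound, I would apply Talagrand's transport inequality \eqref{eq:talaW2}. Since $\rho_t$ satisfies a log-Sobolev inequality with a constant $\lambda > 0$ uniform in $t \in [0,T]$, we obtain
\begin{equation*}
W_2(\hat{\rho}_t,\rho_t) \le \sqrt{\frac{2}{\lambda}\cH(\hat{\rho}_t\mid \rho_t)} \le \sqrt{\frac{2C}{\lambda}}\,h,
\end{equation*}
which is the claimed bound.

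There is no serious obstacle here since the heavy lifting is done by Theorem \ref{thm:errentropy}; the corollary is purely an application of known functional inequalities. The only points that require a brief comment are (i) the uniform-in-$t$ character of the tail constant $C_{\rho_t}$ in the $W_1$ case, which one justifies by noting that moment bounds for $\rho_t$ on $[0,T]$ are standard consequences of Assumption \ref{ass:condition-drift-diffusion}, and (ii) the fact that the log-Sobolev constant in the hypothesis of the $W_2$ bound is assumed uniform, so no additional work is needed there.
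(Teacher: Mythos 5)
Your proposal is correct and matches the paper's argument exactly: the corollary is obtained by combining the relative entropy bound $\cH(\hat{\rho}_t\mid\rho_t)\le Ch^2$ of Theorem \ref{thm:errentropy} with the weighted Csisz\'ar--Kullback--Pinsker inequality for $W_1$ and Talagrand's transport inequality \eqref{eq:talaW2} for $W_2$. Your added remarks on the uniformity in $t$ of the constants are consistent with the paper's own discussion following the corollary.
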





\tcb{As a remark, for the $W_1$ distance, recall the weighted Csisz\'ar-Kullback-Pinsker inequality introduced near \eqref{eq:W1condition}. In order for \eqref{eq:W1condition} to hold, a sufficient condition is that the initial distribution $\rho_0$ is SubGaussian (i.e. there exists some $C>0$ such that $\mathbb{P}(|X_0| > a) \leq \exp(-a^2 / C^2)$ for all $a \geq 0$). In fact, under the current assumptions, it is easy to derive an equivalent characterization of SubGaussian property: $\mathbb{E}[\exp(\alpha |X_t|^2)] \leq 2$ for some positive $\alpha$. This then further means $\rho_t$ satisfies \eqref{eq:W1condition}.}

\tcb{For the $W_2$ distance, recall Talagrand transportation inequality introduced near \eqref{eq:talaW2}. In order for $\rho_t$ to satisfy a log-Sobolev inequality at some $t \geq 0$. A sufficient condition is to assume that the initial distribution $\rho_0$ satisfies a log-Sobolev inequality, and $-c_1<\frac{1}{2} \nabla^2:(\rho_0 \Lambda(t,\cdot)) -   \nabla \cdot( b(t,\cdot) \rho_0) <c_2$ for some $c_1$, $c_2 > 0$ and $t\in [0, T]$. In fact, denoting $q_t := \rho_t / \rho_0$. Note that under these conditions for $\rho_0$, the zero-th order term in the evolutionary equation for $q_t$ is bounded, so one has the maximal principle. Consequently, there exists $c>0$ such that $c^{-1} \leq q_t = \rho_t / \rho_0 \leq c$ for $t\in [0, T]$. By the classical Holley-Stroock perturbation lemma \cite[Proposition 5.1.6]{bakry2013analysis}, one knows that $\rho_t$ satisfies a log-Sobolev inequality.}

\subsection{Proof of the main results}

It is well-known that the SDE \eqref{eq:SDE_multiplicative} is well-defined under the assumptions, and the corresponding density $\rho_t$ satisfies the Fokker-Planck equation \cite{bogachev2022fokker} 
\begin{equation}\label{eq:Fokker-eq-exact}
\frac{\partial \rho_t}{\partial t} =-\nabla \cdot \left(\rho_t b\right)+\frac{1}{2}  \nabla^{2}: ( \rho_t \Lambda ).
\end{equation}
For the density $\hat{\rho}_t$, we have the following.
\begin{lemma}\label{lem:Fokker-eq}
	The density $\hat{\rho}_t$ of the process $X^{h}_t$ defined in equation \eqref{eq:continuous-Euler-method} satisfies the following equation
	\begin{equation}\label{eq:Fokker-eq-numerical}
		\frac{\partial \hat{\rho}_t}{\partial t}
		=
		-\nabla \cdot
		\left(\hat{\rho}_t \hat{b}_t\right)
		+\frac{1}{2} \nabla^{2} :
		(\hat{\rho}_t \hat{\Lambda}_{t} ),  
		\quad 
		t \in[t_{k},t_{k+1}],
	\end{equation}
	where 
 \begin{gather}\label{eq:backwardcond}
 \hat{b}_t(x):=\mathbb{E}\left(b\left( t_{k}, X_{ t_{k}}^{h}\right) | X^{h}_t=x\right), \quad \hat{\Lambda}_t(x):=\mathbb{E}\left( 
	\Lambda \left(t_{k}, X_{ t_{k}}^{h}\right) | X^{h}_t=x\right).
 \end{gather}

\end{lemma}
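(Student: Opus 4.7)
The plan is to derive the equation in the weak (distributional) sense using Itô's formula applied to the frozen-coefficient SDE on each interval, and then identify the drift and diffusion coefficients of the resulting PDE via the tower property of conditional expectation. Fix $t\in[t_k,t_{k+1}]$ and take an arbitrary test function $\varphi\in C_c^{\infty}(\mathbb{R}^d)$. On this interval \eqref{eq:continuous-Euler-method} has $\mathscr{F}_{t_k}$-measurable (hence constant in $s$) coefficients $b(t_k,X^h_{t_k})$ and $\sigma(t_k,X^h_{t_k})$, so Itô's formula yields
\begin{equation*}
d\varphi(X^h_s)=\nabla\varphi(X^h_s)\cdot b(t_k,X^h_{t_k})\,ds+\tfrac{1}{2}\Lambda(t_k,X^h_{t_k}):\nabla^2\varphi(X^h_s)\,ds+\nabla\varphi(X^h_s)\cdot\sigma(t_k,X^h_{t_k})\,dW_s.
\end{equation*}

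Next I would take expectations. The stochastic integral is a true martingale because $\sigma(t_k,X^h_{t_k})$ has finite moments (Lemma \ref{lem:bound-moment}) and $\nabla\varphi$ is bounded, so its expectation vanishes. Differentiating in $t$ and applying the tower property with the $\sigma$-algebra generated by $X^h_t$, I obtain
\begin{equation*}
\frac{d}{dt}\mathbb{E}[\varphi(X^h_t)]=\int_{\mathbb{R}^d}\bigl(\nabla\varphi(x)\cdot\hat{b}_t(x)+\tfrac{1}{2}\hat{\Lambda}_t(x):\nabla^2\varphi(x)\bigr)\hat{\rho}_t(x)\,dx,
\end{equation*}
where $\hat{b}_t$ and $\hat{\Lambda}_t$ are the backward conditional expectations defined in \eqref{eq:backwardcond}. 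On the other hand, the left-hand side equals $\int\varphi(x)\partial_t\hat{\rho}_t(x)\,dx$ by definition of the density. Integrating by parts on the right-hand side and using that $\varphi$ is arbitrary produces \eqref{eq:Fokker-eq-numerical} in the sense of distributions, which is the content of the lemma.

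The subtle point, and where I would be most careful, is the justification of differentiating $t\mapsto\mathbb{E}[\varphi(X^h_t)]$ and of the integration by parts in the weak sense. For the differentiation, one uses Fubini together with the moment bound in Lemma \ref{lem:bound-moment} and the polynomial growth of $b$ and $\Lambda$ (Assumption \ref{ass:condition-drift-diffusion}) to justify interchanging $d/dt$ and $\mathbb{E}$; the integrability of $\hat{b}_t\hat{\rho}_t$ and $\hat{\Lambda}_t\hat{\rho}_t$ follows from the same bounds since $\hat{b}_t(x)\hat{\rho}_t(x)$ is the marginal of $b(t_k,X^h_{t_k})$ against the joint law. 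For the integration by parts, working in the distributional sense avoids any a priori regularity assumption on $\hat{\rho}_t$, $\hat{b}_t$, or $\hat{\Lambda}_t$; the smoothness of these objects (which, in fact, follows from the Malliavin-calculus results of Section \ref{sec:nablalog}) is not needed for the weak formulation. This yields \eqref{eq:Fokker-eq-numerical} on each interval $[t_k,t_{k+1}]$, and since the argument is uniform in $k$ the lemma is established.
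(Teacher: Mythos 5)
Your proposal is correct and follows essentially the same route the paper intends: the paper defers to \cite[Lemma 1]{mou2022improved}, whose argument is precisely this one — apply It\^o's formula on $[t_k,t_{k+1}]$ with the frozen $\mathscr{F}_{t_k}$-measurable coefficients, take expectations, insert the backward conditional expectations via the tower property, and read off \eqref{eq:Fokker-eq-numerical} in the distributional sense. Your attention to the martingale property of the stochastic integral and to the integrability needed for Fubini is exactly the right level of care, so nothing is missing.
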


The proof of this lemma is similar to that in \cite[Lemma 1]{mou2022improved}, so we skip the details.
We remark that the backward conditional expectation in \eqref{eq:backwardcond} in general is not well-defined if there is no information of the density at previous time points. Here, the 
backward conditional expectation is OK since we know $\rho_0$ as the initial law for the numerical scheme.

We now present the proof for the main result.
\begin{proof}[Proof of Theorem \ref{thm:errentropy}]

\tcb{The proof of Theorem \ref{thm:errentropy} borrows ideas from \cite{li2022sharp} and \cite{mou2022improved}.}

{\bf Step 1. Time derivative of the relative entropy}

\tcb{We start from the Fokker-Planck type equations describing the evolution of the densities $\rho_t$ and $\hat{\rho}_t$. In fact,}
using \eqref{eq:Fokker-eq-numerical} and \eqref{eq:Fokker-eq-exact}, one can compute directly
\[
\begin{aligned}
&\frac{d}{d t} \cH(\hat{\rho}_t | \rho_t)  =\int_{\mathbb{R}^d} \frac{\partial \hat{\rho}_t}{\partial t}(\log \hat{\rho}_t+1-\log \rho_t) d x-\int_{\mathbb{R}^d} \frac{\partial \rho_t}{\partial t} \frac{\hat{\rho}_t}{\rho_t} d x \\
	& =  \tcb{-\int_{\mathbb{R}^d}\left(-\hat{\rho}_t \hat{b}_{t}+\frac{1}{2}\nabla \cdot (\hat{\rho}_t\hat{\Lambda}_{t}) \right) \cdot
	\nabla\log \frac{ \hat{\rho}_t}{\rho_t} dx +\int_{\mathbb{R}^d} \left(- (\rho_t b) + \frac{1}{2}  \nabla \cdot
		( \rho_t \Lambda )\right)  \cdot \nabla\left(\log \frac{ \hat{\rho}_t}{\rho_t}\right) \frac{ \hat{\rho}_t }{ \rho_t } d x} \\
	&  = \int_{\mathbb{R}^d}\hat{\rho_t } 
	( \hat{b}_{t} - b ) \cdot \nabla\log \frac{ \hat{\rho}_t}{\rho_t} d x 
	- \frac{1}{2} \int_{\mathbb{R}^d}( \hat{\Lambda}_{t} - \Lambda  ):\nabla \hat{\rho_t }\otimes \nabla\log \frac{ \hat{\rho}_t}{\rho_t}  d x \\
&  \quad - \frac{1}{2} \int_{\mathbb{R}^d} \hat{\rho}_t \left( \nabla \cdot \hat{\Lambda}_{t} 
	- \nabla \cdot   \Lambda \right)\cdot \nabla \log \frac{ \hat{\rho}_t}{\rho_t} d x 
- \frac{1}{2}\int_{\mathbb{R}^d}\hat{\rho}_t\Lambda:\left(\nabla\log \frac{ \hat{\rho}_t}{\rho_t}\right)^{\otimes 2} d x.
\end{aligned}
\]
The integration by parts can be justified by \eqref{eq:integratedlogdensity}.
Clearly, the last term gives the beneficial dissipation.
\begin{equation*}
- \frac{1}{2}\int_{\mathbb{R}^d}\hat{\rho}_t\Lambda:\left(\nabla\log \frac{ \hat{\rho}_t}{\rho_t}\right)^{\otimes 2} d x
\le  -\frac{1}{2}\kappa \int_{\mathbb{R}^d} \hat{\rho}_t \Big| \nabla \log \frac{\hat{\rho}_t}{\rho_t} \Big|^2 d x.
\end{equation*}
Using Young's inequality, we have
\begin{equation*}
\begin{split}
\frac{d}{d t} \cH(\hat{\rho}_t | \rho_t)
& \le C(\kappa)\Big[\int_{\mathbb{R}^d}\hat{\rho}_t | \hat{b}_{t} - b |^{2} d x +\int_{\mathbb{R}^d} \hat{\rho}_t 
			| ( \hat{\Lambda}_{t}- \Lambda)\cdot \nabla \log \hat{\rho}_t |^2  d x \\
	&\quad +\int_{\mathbb{R}^d}\hat{\rho}_t
			|\nabla \cdot \hat{\Lambda}_{t}
			-\nabla \cdot   \Lambda 
			|^{2}  d x\Big]
			-\frac{\kappa}{4}\int_{\mathbb{R}^d} \hat{\rho}_t \Big| \nabla \log \frac{\hat{\rho}_t}{\rho_t} \Big|^2 d x\\
   & =: J_1 +  J_2+  J_3 +  J_4,
\end{split}
\end{equation*}
for some constant $C(\kappa)>0$.

\tcb{Note that $J_4 \leq 0$. The other three terms are basically errors from the drifts and diffusions (note that $J_2$ and $J_3$ vanish if the noise is only additive). Similarly to the derivations in \cite{li2022sharp} and \cite{mou2022improved}, using integration by parts and Bayes' law, we can obtain an $O(h^2 (1 + (\mathbb{E}|\nabla \log \hat{\rho}_t(X^h_t)|^p)^{\tfrac{2}{p}}))$ upper bound for both $J_1$ and $J_2$ for some large $p$ (see more details in Step 2 below). Fortunately, in \eqref{eq:integratedlogdensity} above, we have already obtained an $O(1)$ upper bound for $\mathbb{E}|\nabla \log \hat{\rho}_t(X^h_t)|^p$, so $J_1$ and $J_2$ are of order $h^2$. The estimate for $J_3$ shares the similar idea but the detailed derivation is much more complicated. Moreover, it requires higher order regularity due to the existence of the divergence operator in it. In detail, by \eqref{eq:integratedlogdensity} again, we make use of the boundedness of the $L^p$ norm of $\nabla^k \log \hat{\rho}_t(X^h_t)$ for $k = 1,2$ and large $p$, and finally obtain an $O(h^2)$ upper bound for $J_3$.}

\vskip 0.2 in

\noindent {\bf Step 2. The $J_1$ and $J_2$ terms}

Consider $J_1$. From its definition, one has 
\begin{gather}\label{eq:diffbaux1}
 |\hat{b}_t(x)-b(t, x)|^2 
= \big|\mathbb{E}\big[b (t_{k}, X_{ t_{k}}^{h} )-b(t, X^{h}_t) | X^{h}_t=x\big] \big|^2.
\end{gather}
We decompose the difference
\[
b (t_{k}, X_{ t_{k}}^{h} )-b(t, X^{h}_t)=(b (t_{k}, X_{ t_{k}}^{h} )-b (t_{k}, X^{h}_t ))+(b (t_{k}, X^{h}_t )-b(t, X^{h}_t)).
\]
By Assumption \ref{ass:condition-drift-diffusion} and the bounded moment in Lemma \ref{lem:bound-moment},  one has
\begin{equation*}
    \int \big|\mathbb{E}\big[b (t_{k}, X^{h}_t )-b(t, X^{h}_t) | X^{h}_t=x\big] \big|^2\hat{\rho}_t dx 
    \leq \mathbb{E}\big[L_t^2(t-t_{k})^2(1 + |X^{h}_t|^q)\big] \leq C(t-t_{k})^2.
\end{equation*}

For the other term, by Taylor's expansion, one has
\begin{equation}\label{eq:J1axu1}
    \mathbb{E}\big[b (t_{k}, X_{ t_{k}}^{h} )-b(t_{k}, X^{h}_t) | X^{h}_t=x\big] 
    =  \mathbb{E}\big(X_{ t_{k}}^{h}-X^{h}_t | X^{h}_t=x \big)\cdot \nabla_x b(t_{k}, x)+\hat{r}_t(x),
\end{equation}
where the remainder takes the form
$$
\hat{r}_t(x):=\mathbb{E}\left(\int_0^1 (1-s)  (X_{ t_{k}}^{h}-X^{h}_t)^{\otimes 2}:\nabla_{x}^2b\left(t_{k}, (1-s) X^{h}_t+s X_{ t_{k}}^{h}\right) d s | X^{h}_t=x\right).
$$
By Jensen's inequality, the H\"older inequality and the moment control, the remainder term $\hat{r}_t$ satisfies
\begin{multline}\label{eq:J1auxR}
\int_{\R^d} |\hat{r}_t(x)|^2\hat{\rho}_t(x)\,dx
 \leq \E \left| \int_0^1 (1-s)  (X_{ t_{k}}^{h}-X^{h}_t)^{\otimes 2}:\nabla_{x}^2b\left(t_{k}, (1-s) X^{h}_t+s X_{ t_{k}}^{h}\right) d s\right|^2\\
 \le \sup_{0\le s\le 1} \Big(\mathbb{E}\big |  \nabla_{x}^2b\big( t_{k}, (1-s) X^{h}_t+s X_{ t_{k}}^{h} \big) \big|^{4}\Big)^{\frac{1}{2}}
 \Big(\E|X_{ t_{k}}^{h}-X^{h}_t |^{8}\Big)^{\frac{1}{2}}  \le C
 (t-t_{k})^2,
\end{multline}
where we have used the polynomial growth assumption for $\nabla^2_x b$ and the moment bound in Lemma \ref{lem:bound-moment} in the last inequality. 
For the first term on the right-hand side of \eqref{eq:J1axu1}, one has by Bayes' formula that
\begin{equation}\label{eq:error-one}
\mathbb{E}( X_{ t_{k}}^{h} - X^{h}_{t} | X^{h}_{t} = x 	)
	=\int_{\R^d} ( y - x )\frac{ \hat{\rho}_{t_{k} } (y) p(  X^{h}_{t} = x | X_{ t_{k}}^{h} = y) }{ \hat{\rho}_t (x)}  d y.
\end{equation}
Note that the transition kernel $p\left(X^{h}_t=x | X_{ t_{k}}^{h}=y\right) $ is a Gaussian given by
\begin{gather}
\begin{split}
& p\left(X^{h}_t=x | X_{ t_{k}}^{h}=y\right)= ( 2 \pi ( t - t_{k} ) )^{-\frac{d}{2}}\det (\Lambda(t_{k},y) )^{-\frac{1}{2}}\exp(-\phi(x, y, t))\\ 	
& \phi(x, y, t)=\frac{1}{2(t-t_{k})}( x - y - ( t - t_{k} ) b(t_{k},y) )^T\Lambda(t_{k},y)^{-1}( x - y - ( t - t_{k} ) b(t_{k},y) ).
\end{split}
\end{gather}

This elementary fact tells us that $(y-x)$ is roughly like the derivative of $y$ on $p(X^{h}_t=x|X^{h}_{t_{k}}=y)$. Integration by parts could treat this term, which corresponds to averaging out the fluctuation brought by the Brownian motion.  
In particular, we decompose 
\begin{gather}\label{eq:J1keyaux}
	(y-x)\exp(-\phi(x, y, t)) = - ( t - t_{k} ) \Lambda ( t_{k}, y) \nabla_{y} \exp(-\phi(x, y, t))+R_{1} \exp(-\phi(x, y, t)),
\end{gather}
where
\begin{equation}
	\begin{split}
		R_{1} & = -(t-t_k)b(t_k, y)-(t-t_k)\Lambda\cdot \nabla_y b\cdot \Lambda^{-1}(y-x+(t-t_k)b)\\
		 &- \frac{1}{2}\sum_{\ell=1}^d\Lambda_{\cdot, \ell}\big( x - y  - ( t - t_{k} ) b( t_{k}, y) \big)^{T}
		 \frac{\partial \Lambda^{-1} ( t_{k}, y) }{\partial{y_{\ell}} }\big( x - y  - ( t - t_{k} ) b( t_{k}, y)\big).
	\end{split}
\end{equation}

Hence, one has
\begin{gather}\label{eq:J1firstaux2}
	\begin{split}
		&\mathbb{E}( X_{ t_{k}}^{h} - X^{h}_{t} | X^{h}_{t} = x )\\
		&=\int_{\R^d}\frac{\hat{\rho}_{t_k}(y)}{\hat{\rho}_t(y)}\left(-(t-t_k)\Lambda\cdot \nabla_yp
		+[(t-t_k)\Lambda\cdot\nabla_y \log(\det (\Lambda(t_{k},y) )^{-\frac{1}{2}})+R_1] p\right)dy\\
		&=(t-t_{k})\mathbb{E}\left(
		\Lambda(t_k, X_{ t_{k}}^{h})\cdot\nabla_{y} \log \hat{\rho}_{t_{k}}\left(X_{ t_{k}}^{h}\right) | X^{h}_t=x\right)+\int R_2\frac{ \hat{\rho}_{t_{k} } (y)p(  X^{h}_{t} = x | X_{ t_{k}}^{h} = y) } { \hat{\rho}_t (x)}  d y.
	\end{split}
\end{gather}
where
$R_2=(t-t_{k})\nabla_{y} \cdot \Lambda (t_{k} , y) +(t-t_k)\Lambda\cdot\nabla_y \log(\det (\Lambda(t_{k},y) )^{-\frac{1}{2}})+R_1$.
Applying the Jensen's inequality , H\"older's inequality, the polynomial growth assumption and the moment bound in Lemma \ref{lem:bound-moment}, the $L^2$ norm of the first term on the right-hand side of \eqref{eq:J1axu1} is thus controlled by 
\[
\begin{split}
C (t-t_{k})^2 (\mathbb{E}\left|\nabla_{y}\log \hat{\rho}_{t_{k}}(X_{ t_{k}}^{h})\right|^{p_2})^{\frac{2}{p_2}} +2\E|R_2|^2 \le C (t-t_{k})^2,
\end{split}
\]
\tcb{where we have used 
$$ \mathbb{E}  \left|\nabla\log\hat{\rho}_{t}(X_{t}^{h} ) \right|^{p_2}   \le C  $$
in \eqref{eq:integratedlogdensity} above for $t = t_k$ (recall the $p_2$ in Assumption \ref{ass:Uniform bound})}, while the estimate for $R_2$ is relatively straightforward and we have skipped the details.  Hence, one has
\begin{equation*}
J_1	\leq C (t-t_k)^2.
\end{equation*}

By H\"older inequality, one has
\begin{equation}
J_{2} \le \left( \mathbb{E}  |  \hat{\Lambda}_{t}(X_t^h) - \Lambda(t, X_t^h) |^4 \right)^{\frac{1}{2}} \,\left( \mathbb{E} | \nabla \log \hat{\rho}_{t}(X_t^h) |^4\right)^{\frac{1}{2}}.
\end{equation}
\tcb{By \eqref{eq:integratedlogdensity} again, one knows that $\mathbb{E} | \nabla \log \hat{\rho}_{t}(X_t^h) |^4$ is bounded (recall that $p_2 > 4$), so} one only has to treat with $\hat{\Lambda}_{t}(X_t^h) - \Lambda(t, X_t^h)$. This can be done similarly as we did for $\hat{b}_t-b$,
except that we need the fourth moment. Hence, one also has
\begin{equation*}
J_2	\leq C(X_0) (t-t_k)^2.
\end{equation*}

\vskip 0.2 in

\noindent {\bf Step 3. The $J_3$ term.}

Consider $ J_{3} =\int_{\mathbb{R}^d}\hat{\rho}_{t}(x) |\nabla\cdot\hat{\Lambda}_{t}(x)
-\nabla \cdot \Lambda(t, x) |^{2}  d x$. We first have by definition that
\begin{gather*}
\begin{split}
\nabla\cdot\hat{\Lambda}_{t}(x)-\nabla \cdot \Lambda(t, x)&=\nabla_x\cdot \mathbb{E}\big[\Lambda (t_{k}, X_{ t_{k}}^{h} )-\Lambda(t, X^{h}_t) | X^{h}_t=x\big]\\
&=\nabla_x\cdot \mathbb{E}\big[\Lambda (t_{k}, X_{ t_{k}}^{h} )-\Lambda(t_k, X^{h}_t) | X^{h}_t=x\big]+
\nabla_x\cdot \big[\Lambda (t_{k}, x)-\Lambda(t, x) \big].
\end{split}
\end{gather*}

\tcb{By \eqref{eq:Lam_timelip} in Assumption \ref{ass:condition-drift-diffusion}, } the second term $\nabla_x\cdot \big[\Lambda (t_{k}, x)-\Lambda(t, x) \big]$ is easy to treat with and we skip.
Similarly as before, we have
\begin{equation}\label{eq:J4aux1}
    \mathbb{E}\big[\Lambda (t_{k}, X_{ t_{k}}^{h} )-\Lambda(t_{k}, X^{h}_t) | X^{h}_t=x\big] 
    =  \mathbb{E}\big(X_{ t_{k}}^{h}-X^{h}_t | X^{h}_t=x \big)\cdot \nabla_x \Lambda(t_{k}, x)+\bar{r}_t(x),
\end{equation}
where the remainder takes the form
\begin{gather*}
\begin{split}
\bar{r}_t(x)&=\mathbb{E}\left(\int_0^1 (1-s)  (X_{ t_{k}}^{h}-X^{h}_t)^{\otimes 2}:\nabla_{x}^2\Lambda\left(t_{k}, (1-s) X^{h}_t+s X_{ t_{k}}^{h}\right) d s | X^{h}_t=x\right)\\
&=\int_{\R^d}\int_0^1 (1-s)  (y-x)^{\otimes 2}:\nabla_{x}^2\Lambda\left(t_{k}, (1-s) x+s y\right) 
\frac{\hat{\rho}_{t_k}(y)p(X_t^h=x|X_{t_k}^h=y)}{\hat{\rho}_t(x)}\, dsdy.
\end{split}
\end{gather*}
We denote $z(s)=sx+(1-s)y$ and thus have
\begin{equation}\label{eq:divLambdaremainder}
\begin{split}
& \nabla_{x} \cdot \bar{r}_t(x) \\
= &\int_{\mathbb{R}^d}\int_0^1 (1-s)^2 (y -x)^{\otimes 2}: \nabla^2 (\nabla\cdot\Lambda) (z(s) )   p ( X _{ t }^{h} = x |
				X _{ t_{k} }^{h} = y )\frac{ \hat{\rho}_{t_k} ( y ) } { \hat{\rho}_{t} ( x ) } d s d y \\
				&- \int_{\mathbb{R}^d}\int_0^1 2 (1-s) (y-x)\cdot \nabla(\nabla\cdot
				\Lambda (z(s)) )p ( X _{ t }^{h} = x |X _{ t_{k} }^{h} = y )\frac{ \hat{ \rho }_{t_{k}}(y)}
				{ \hat{\rho}_{t} ( x ) } d s d y	\\
				&- \int_{\mathbb{R}^d}\int_0^1 (1-s) (y -x)^{\otimes 2}:\nabla^2 
				\Lambda (z(s) )\cdot \frac{ \nabla \log \hat{\rho}_{t} ( x ) }{ \hat{\rho}_{t} ( x ) }   p ( X _{ t }^{h} = x |X _{ t_{k} }^{h} = y )\hat{ \rho }_{t_k}(y) d s d y	\\
				& + \int_{\mathbb{R}^d}\int_0^1 (1-s) (y -x)^{\otimes 2}:\nabla^2 \Lambda (z(s) ) 
				 \cdot \nabla_x\log p ( \cdot|\cdot ) p ( X _{ t }^{h} = x |X _{ t_{k} }^{h} = y )
				\frac{\hat{ \rho }_{t_k}(y) }{ \hat{\rho}_{t} ( x ) } d sd y \\
=: & R_{31}+R_{32}+R_{33}+R_{34}.
	\end{split}
\end{equation}

\tcb{Note that at time $t$, using  
$$ \mathbb{E}  \left|\nabla\log\hat{\rho}_{t}(X_{t}^{h} ) \right|^{p_2}   \le C  $$
obtained in \eqref{eq:integratedlogdensity} (recall the $p_2$ in Assumption \ref{ass:Uniform bound}),
$\int \hat{\rho}_t(x)|R_{31}+R_{33}|^2$ can be estimated by Jensen's inequality similar as in \eqref{eq:J1auxR}.}
The $R_{32}$ term is similar to that in \eqref{eq:J1keyaux} and \eqref{eq:J1firstaux2}.  For $R_{3,4}$, we find that
\[
\nabla_x\log p ( \cdot|\cdot )=-\nabla_x\phi(x, y, t)=-\frac{1}{t-t_k}\Lambda(t_k, y)^{-1}(x-y-(t-t_k)b(t_k, y)).
\]
The part for $-\frac{1}{t-t_k}(t-t_k)b$ is easy which can be done similarly as $R_{31}$ as in \eqref{eq:J1auxR}.
The main  term in $R_{3,4}$,  $-\frac{1}{t-t_k}\Lambda(t_k, y)^{-1}(x-y)$, can be treated similarly as in  \eqref{eq:J1keyaux} and \eqref{eq:J1firstaux2}. That is,
\begin{multline*}
-\frac{1}{t-t_k}\Lambda(t_k, y)^{-1}(x-y)p ( X _{ t }^{h} = x |X _{ t_{k} }^{h} = y )
=-\nabla_yp ( X _{ t }^{h} = x |X _{ t_{k} }^{h} = y )\\
+\tilde{R}_{3,4}p ( X _{ t }^{h} = x |X _{ t_{k} }^{h} = y ),
\end{multline*}
where $\tilde{R}_{3,4}$ is easy to control.
This is not surprising as $\nabla_x\sim -\nabla_y$ to the leading order for the heat kernel, as discussed previously.
The $-\nabla_yp ( X _{ t }^{h} = x |X _{ t_{k} }^{h} = y )$ term, after integration by parts, gives the main terms
\[
R_{32}- \int_{\mathbb{R}^d}\int_0^1 (1-s) (y -x)^{\otimes 2}:\nabla^2 
				\Lambda (z(s) )\cdot \frac{\hat{ \rho }_{t_k}(y) \nabla \log \hat{\rho}_{t_k} ( y ) }{ \hat{\rho}_{t} ( x ) }   p ( X _{ t }^{h} = x |X _{ t_{k} }^{h} = y ) d s d y,
\]
and other remainder terms that are easy to control. The term $R_{32}$ has been explained, while the other one 
can be controlled \tcb{using $ \mathbb{E}  \left|\nabla\log\hat{\rho}_{t}(X_{t}^{h} ) \right|^{p_2}   \le C$} obtained in \eqref{eq:integratedlogdensity} (recall the $p_2$ in Assumption \ref{ass:Uniform bound}).
Hence, $\nabla\cdot\bar{r}_t$ can be treated.

Now, consider the divergence of the first term in \eqref{eq:J4aux1}, namely
\begin{multline*}
\nabla\cdot\left(\mathbb{E}\big(X_{ t_{k}}^{h}-X^{h}_t | X^{h}_t=x \big)\cdot \nabla_x \Lambda(t_{k}, x)\right)
=\mathbb{E}\big(X_{ t_{k}}^{h}-X^{h}_t | X^{h}_t=x \big)\cdot \nabla_x (\nabla\cdot\Lambda(t_{k}, x)) \\
+ \sum_{ij}\partial_{x_i}\left(\mathbb{E}\big(X_{ t_{k}}^{h}-X^{h}_t | X^{h}_t=x \big)\right)_j: \partial_{x_j} \Lambda_{i,\cdot}(t_{k}, x)
=M_1+M_2.
\end{multline*}
The term $M_1$ can be dealt with similarly as that in \eqref{eq:J1keyaux} and \eqref{eq:J1firstaux2} so we skip.
Consider $M_2$. The matrix with curiosity can be computed as follows.
\begin{multline}
 \nabla ( \mathbb{E} ( X^{h}_{ t_{k} } - X^{h}_{t} | X^{h}_{t} = x ) )= \int_{\mathbb{R}^d}
\nabla_x  \Big[( y - x ) p (  \hat{ X }_{t } = x  | X_{ t_{k}}^{h} = y) \Big] \frac{ \hat{\rho}_{t_{k}} (y)}{ \hat{\rho}_{t} (x) }dy \\
 -\int_{\mathbb{R}^d} \nabla \log \hat{\rho}_{t}(x) \otimes ( y - x ) p (  \hat{ X }_{t } = x  | X_{ t_{k}}^{h} = y) \frac{ \hat{\rho}_{t_{k}} (y)}{ \hat{\rho}_{t} (x) }	 d y =:  M_{21} + M_{22}.
\end{multline}

For $M_{22}$, we again use the trick to deal with $y-x$ as that in \eqref{eq:J1keyaux} and \eqref{eq:J1firstaux2} to do integration by parts.  Then, we need to estimate $\sqrt{\E (|\nabla\log \hat{\rho}_t(X_t^h)|^2 | \nabla\log \hat{\rho}_{t_{k}}(X_{t_k}^h)|^2)} $. \tcb{Again, this can be controlled using $ \mathbb{E}  \left|\nabla\log\hat{\rho}_{t}(X_{t}^{h} ) \right|^{p_2}   \le C$
in \eqref{eq:integratedlogdensity} at time $t_k$ and $t$(recall the $p_2$ in Assumption \ref{ass:Uniform bound}).}

The $M_{21}$ term is the most singular term. We use the intuition $\nabla_x\sim -\nabla_y$ again.
Direct computation gives the following
\begin{equation}
\begin{split}
& \nabla_{x}  \Big[( y - x ) p (  \hat{ X }_{t } = x  | X_{ t_{k}}^{h} = y) \Big]+\nabla_{y} \Big[( y - x ) p (  X_{t}^{h} = x  | X_{ t_{k}}^{h} = y) \Big] \\
= &\left( (\nabla_x+\nabla_y)\log p (  \hat{ X }_{t } = x  | X_{ t_{k}}^{h} = y)\right)\otimes (y-x) p (X_t^h= x  | X_{t_{k}}^{h} = y)\\
=:&\left(E_1+E_2\right)\otimes(y-x) p (  \hat{ X }_{t } = x  | X_{ t_{k}}^{h} = y),
\end{split}
\end{equation}
with
\[
\begin{split}
& E_1=-\frac{1}{2}\nabla_y\log \det \Lambda(t_k, y)-\nabla_y b(t_k, y)\cdot\Lambda^{-1}(y-x+(t-t_k)b),\\
& E_2=-\left(\frac{(x-y-(t-t_k)b)^T\partial_{y_{\ell}}\Lambda^{-1}(x-y-(t-t_k)b)}{2(t-t_k)}\right)_{\ell=1:d}.
\end{split}
\]
The $E_1$ term and the terms involving $(t-t_k)b$ in $E_2$ are fine, which can be controlled directly using \tcb{Jensen's} inequality and 
H\"older inequality. The essential term in replacing $\nabla_x$ with $-\nabla_y$ is thus 
\[
-\frac{1}{2}(t-t_k)^{-1}((x-y)^T\partial_{y_{\ell}}\Lambda^{-1}(x-y))_{\ell=1:d}\otimes(y-x) p (X_{t}^h = x  | X_{ t_{k}}^{h} = y).
\]
This is similar to $R_{34}$ in \eqref{eq:divLambdaremainder} above. After integration by parts (for one term twice), \tcb{one needs $ \mathbb{E}  \left|\nabla\log\hat{\rho}_{t}(X_{t}^{h} ) \right|^{p_2}   \le C$ in 
\eqref{eq:integratedlogdensity} at time $t_k$ (recall the $p_2$ in Assumption \ref{ass:Uniform bound}).}

Hence, controlling $M_{21}$ term is reduced to the control of the following
\[
\begin{split}
\tilde{M}_{21} &=-\int_{\mathbb{R}^d}\nabla_y  \Big[( y - x ) p (  \hat{ X }_{t } = x  | X_{ t_{k}}^{h} = y) \Big] \frac{ \hat{\rho}_{t_{k}} (y)}{ \hat{\rho}_{t} (x) }dy\\
&=\int_{\mathbb{R}^d}\nabla_y\log \hat{\rho}_{t_{k}} (y)\otimes ( y - x ) p (  \hat{ X }_{t } = x  | X_{ t_{k}}^{h} = y)\frac{ \hat{\rho}_{t_{k}} (y)}{ \hat{\rho}_{t} (x) }dy.
\end{split}
\]
Using again the trick as that in \eqref{eq:J1keyaux} and \eqref{eq:J1firstaux2}, this is reduced to
\[
\int_{\mathbb{R}^d}\nabla_y\log \hat{\rho}_{t_{k}} (y)\otimes [-(t-t_k)\Lambda(t_k, y)\cdot\nabla_y p (  \hat{ X }_{t } = x  | X_{ t_{k}}^{h} = y)]\frac{ \hat{\rho}_{t_{k}} (y)}{ \hat{\rho}_{t} (x) }dy.
\]
Integration by parts again, we can control this main term eventually, \tcb{with the help of the control for
$\E |\nabla^2\log\hat{\rho}_{t_k}(X_{t_k}^h)|^{p_3}$  and $\E |\nabla\log \hat{\rho}_{t_{k}} (X_{t_k}^h)|^{p_2}$  in \eqref{eq:integratedlogdensity} (recall the $p_2$, $p_3$ in Assumption \ref{ass:Uniform bound}).}

\vskip 0.2 in

\noindent {\bf Step 4. The overall estimate}

Hence, one obtains that for $ t \in [ t_{k}, t_{k+1} ] $,
\begin{equation}
\frac{d}{d t} \cH(\hat{\rho}_t | \rho_t)
	\leq C ( t - t_{k} )^2-\frac{\kappa}{4}\int_{\mathbb{R}^d} \hat{\rho}_t | \nabla \log \frac{\hat{\rho}_t}{\rho_t}|^2 d x.
\end{equation}
It follows that
$\cH(\hat{\rho}_t | \rho_t) \leq C h^2$ as desired for all $t\le T$.
 \end{proof}

\section{\tcb{Numerical Experiments}}\label{sec:experiments}
\tcb{This section provides numerical evidence for the second-order convergence in relative entropy of the Euler-Maruyama scheme, consistent with Theorem~\ref{thm:errentropy}, through two benchmark test cases.
First, geometric Brownian motion is employed as a canonical stochastic differential equation to verify the theoretical convergence rate, representing a fundamental class of diffusion processes. Second, a modified Ginzburg-Landau model examines the convergence behavior of the algorithm in higher-dimensional settings with coupled multiplicative noise.  
All densities are approximated via kernel density estimation (KDE) with extensive Monte Carlo simulations ($10^6$ paths per configuration), using the bandwidth selection method proposed by Silverman~\cite{silverman2018density} and numerical stabilization ($\epsilon=10^{-10}$). Convergence metrics are computed over logarithmically spaced grids to ensure accuracy across different density regimes.}

\subsection{\tcb{Geometric Brownian Motion (1D Model)}}\label{subsec:gbm_experiment}
\tcb{We analyze the geometric Brownian motion process
\begin{equation}\label{eq:gbm}
    dX_t = \alpha X_t dt + \sigma X_t dW_t, \quad X_0=1,
\end{equation}
with $\alpha=0.6$ and $\sigma=0.1$, chosen for its well-understood dynamical properties and broad applicability in stochastic modeling. The reference solution is generated using the Milstein scheme:
\begin{equation}\label{eq:milstein}
    X_{n+1} = X_n + \alpha X_n h_{\text{ref}} + \sigma X_n\Delta W_n + \frac{\sigma^2}{2}X_n(\Delta W_n^2 - h_{\text{ref}})
\end{equation}
at $h_{\text{ref}}=2^{-12}$ with $M=10^6$ independent paths.}

\tcb{The density $\rho_T(x)$ is estimated via Gaussian kernel density estimation:
\begin{equation}\label{eq:kde}
    \rho(x) = \frac{1}{M\lambda_n}\sum_{j=1}^M \frac{1}{\sqrt{2\pi}}e^{-\frac{(x-X_T^{(j)})^2}{2\lambda_n^2}}
\end{equation}
}
\tcb{where $\lambda_n=0.05$ is the Silverman-optimized bandwidth and $X_T^{(j)}$ denotes terminal values. Evaluations use a 1000-point logarithmic grid $x_i\in[10^{-3},10^1]$ with spacing $\Delta x_i =0.0019$.}

\tcb{For convergence analysis, the Euler-Maruyama scheme:
\begin{equation}\label{eq:euler}
    X_{n+1} = X_n + \alpha X_n h_k + \sigma X_n\Delta W_n
\end{equation}
is tested at $h_k=2^{-k}$ ($k=3,\dots,7$). Relative entropy is computed as:
\begin{equation}\label{eq:entropy}
    \mathcal{H}(\hat{\rho}\|\rho) = \sum_{i=1}^{1000} \max(\hat{\rho}(x_i),\epsilon) \log\left(\frac{\max(\hat{\rho}(x_i),\epsilon)}{\rho_T(x_i)}\right)\Delta x_i
\end{equation}
with $\epsilon=10^{-10}$ regularization.}

\begin{figure}[!h]
	\centering
	\includegraphics[
	width=0.7\linewidth]{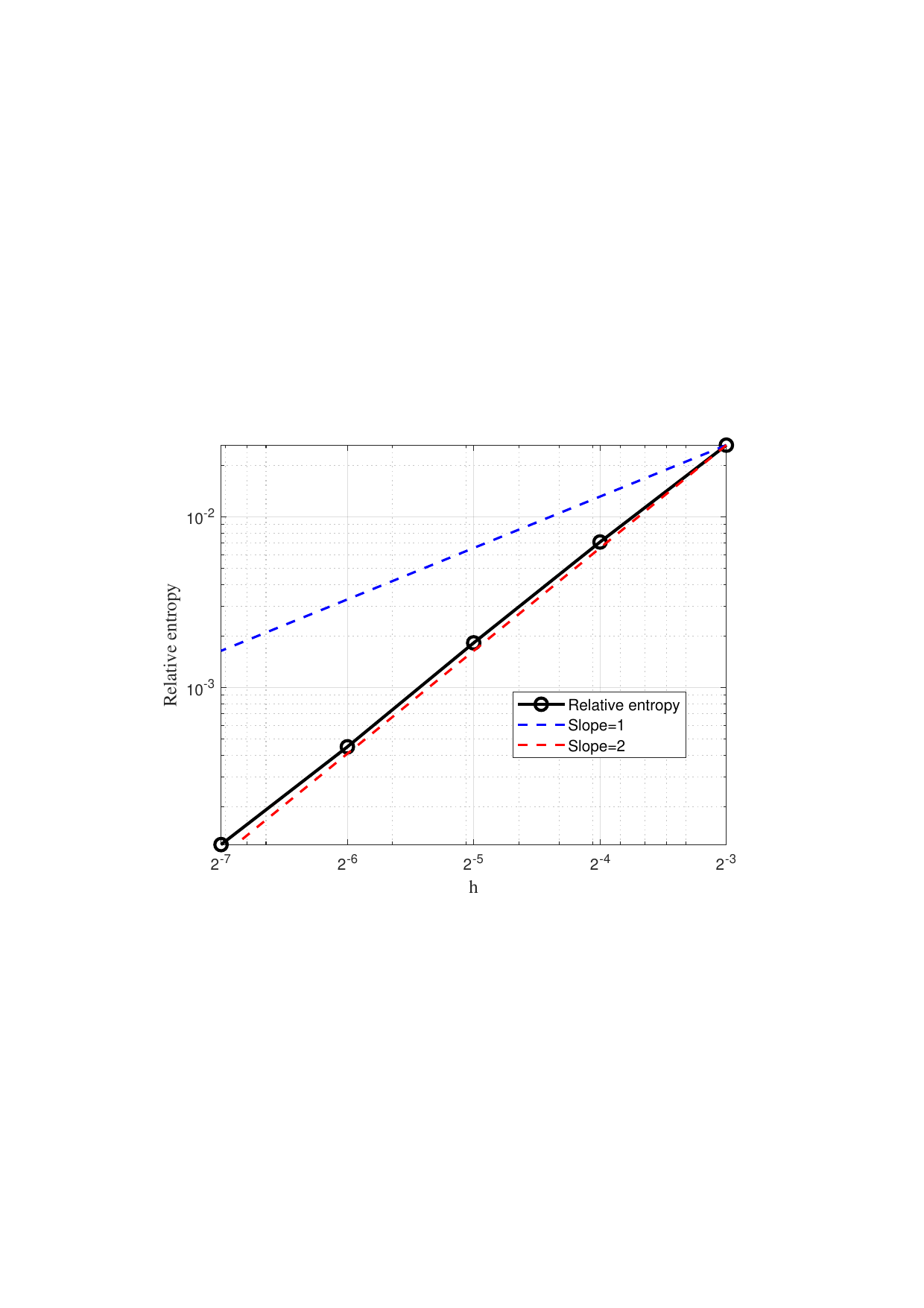}
	\caption{\tcb{
		Log-log plot of the relative entropy $\mathcal{H}(\rho_h | \rho_{\mathrm{ref}})$ versus step size $h$, where $\rho_{\mathrm{ref}}$ is computed at $h_{\mathrm{ref}}=2^{-12}$. The empirical convergence rate (black circles with solid line) closely aligns with the theoretical second-order reference (red dashed line), while showing clear deviation from the first-order behavior (blue dashed line).}
	}
	\label{fig:entropy_convergence}
\end{figure}

\tcb{The numerical results validate the second-order convergence predicted by Theorem~\ref{thm:errentropy}, demonstrating a significant improvement over existing first-order guarantees for systems with multiplicative noise. As shown in Figure~\ref{fig:entropy_convergence}, the log-log plot of relative entropy exhibits clear $\mathcal{O}(h^2)$ scaling, confirming the theoretical convergence rate. This represents a twofold convergence rate improvement compared to standard first-order methods. Also,
Figure~\ref{fig:entropy_convergence} shows that at $h=2^{-7}$, 
the relative entropy reaches $\mathcal{O}(10^{-4})$ magnitudes, 
indicating good convergence performance of the Euler method 
in relative entropy measurement.}

\subsection{\tcb{A modified Ginzburg-Landau System}}
\tcb{The Ginzburg-Landau (GL) equation, originally developed in superconductivity theory, describes phase transitions in complex systems. Its stochastic version for a two-component field $(X_t,Y_t)$ takes the form:
\begin{equation}\label{eq:gl_original}
	\begin{cases}
		dX_t = (\alpha X_t - \gamma Y_t - \beta X_t^3) dt + \sigma X_t dW_t^1, \\
		dY_t = (\alpha Y_t + \gamma X_t - \beta Y_t^3) dt + \sigma Y_t dW_t^2.
	\end{cases}
\end{equation}
Where $\alpha$ controls linear stability, $\gamma$ determines rotational coupling, $\beta>0$ governs nonlinear saturation, $\sigma$ is the noise intensity, and where $W_t^1$ and $W_t^2$ are two independent standard Brownian motions. The cubic nonlinearity $-X_t^3$ ensures bounded solutions but introduces non-global Lipschitz continuity, potentially causing numerical instabilities (as demonstrated by M. Hutzenthaler et al. \cite{hutzenthaler2011strong}, where the Euler scheme diverges for non-globally Lipschitz coefficients).
To ensure numerical tractability while preserving physical characteristics, we employ a tamed version through nonlinearity regularization:
\begin{equation}\label{eq:gl_tamed}
	\left\{
	\begin{aligned}
		dX_t &= \left( \alpha X_t - \gamma Y_t - \beta \frac{X_t^3}{1 + X_t^2} \right) dt + \sigma X_t dW_t^1 ,\\
		dY_t &= \left( \alpha Y_t + \gamma X_t - \beta \frac{Y_t^3}{1 + Y_t^2} \right) dt + \sigma Y_t dW_t^2.
	\end{aligned}
	\right.
\end{equation}
The taming term $\frac{u^3}{1+u^2}$ is globally Lipschitz continuous and exhibits linear growth.}

\tcb{Numerical experiments were conducted on the two-dimensional Ginzburg-Landau system with parameters $\alpha=0.5$, $\beta=1$, $\gamma=3$, and $\sigma=0.5$. Numerical solutions were generated via the Euler-Maruyama scheme with time steps $h_k=2^{-k}$ ($k=5,...,9$), while the reference solution was computed using the Milstein scheme at $h_{\text{ref}}=2^{-14}$, both employing $10^6$ Monte Carlo samples. The kernel density estimation in two dimensions exhibits substantially higher computational demands: the bandwidth selection extends from a scalar $\lambda_n=0.05$ in 1D to a covariance matrix (diagonal bandwidth $\lambda_{ii}=0.15$) in 2D, the grid points increase from 1,000 to 40,000 resulting in 150-fold higher memory requirements, and the pseudo-count threshold must be relaxed to $\epsilon=10^{-10}$ to maintain numerical stability.}

\begin{figure}[!ht]
	\centering
	\includegraphics[width=0.7\linewidth]{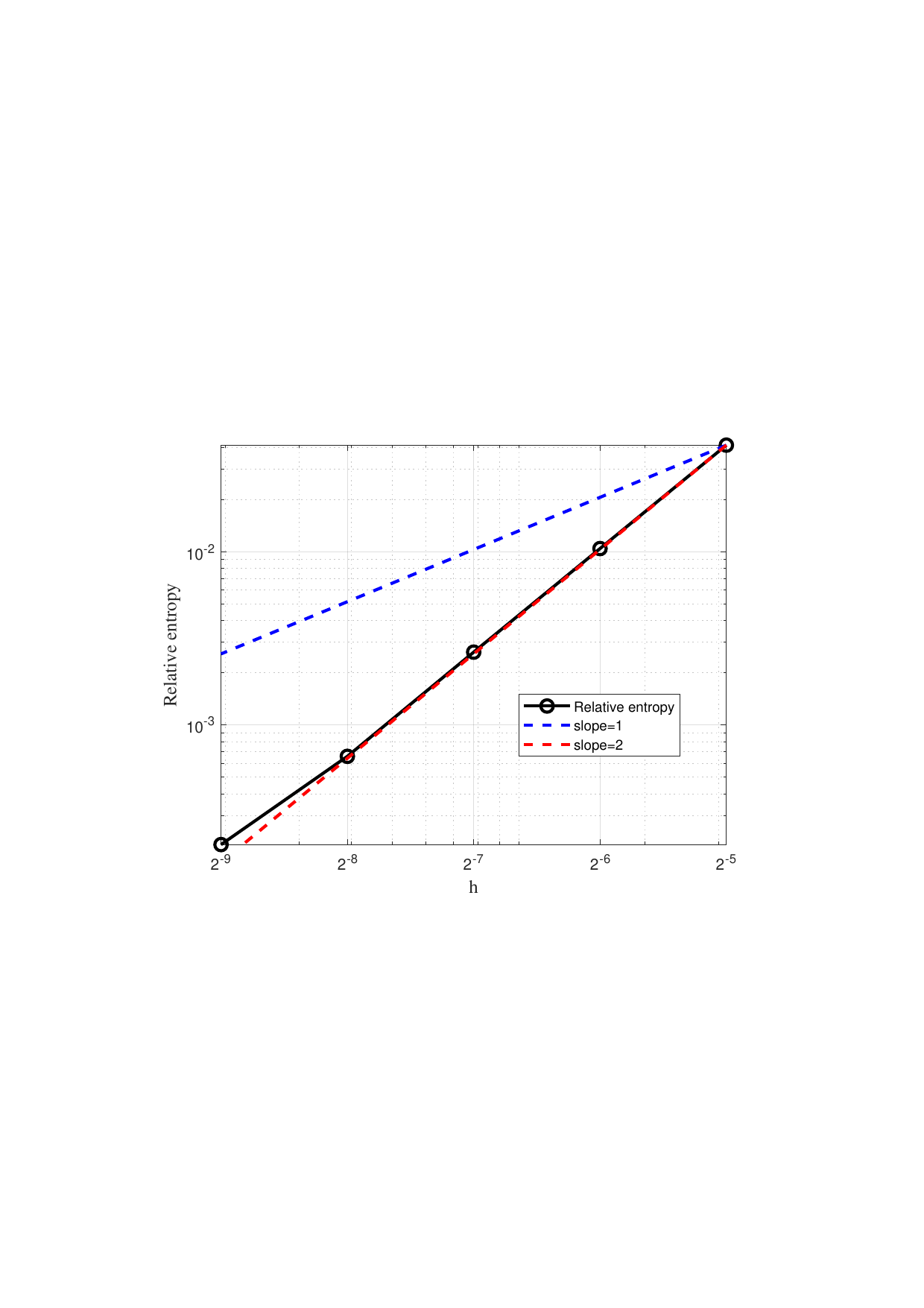}
	\caption{
		\tcb{Estimate for relative entropy in the modified Ginzburg-Landau system.
Log-log plot of the relative entropy $\mathcal{H}(\rho_{h} \| \rho_{\mathrm{ref}})$ versus time step $h$, 
where the reference distribution $\rho_{\mathrm{ref}}$ is computed at $h_{\mathrm{ref}} = 2^{-14}$.
The empirical convergence rate (black circles with solid line) closely aligns with the theoretical second-order reference (red dashed line), while showing clear deviation from the first-order behavior (blue dashed line).}
}
	\label{fig:GL_convergence}
\end{figure}

\tcb{Numerical analysis confirms second-order convergence in relative entropy for the Euler method applied to two-dimensional stochastic systems. The observed $\mathcal{O}(h^2)$ decay fully aligns with the theoretical prediction of Theorem~\ref{thm:errentropy}, with empirical results showing clear deviation from first-order behavior across all tested step sizes. Moreover, Figure~\ref{fig:GL_convergence} verifies these convergence characteristics at practical resolutions: applying $h=2^{-9}$ reduces relative entropy to $\mathcal{O}(10^{-4})$ magnitudes, indicating its reasonably good convergence behavior in entropy computation for the tested two-dimensional system.}

\section{Conclusion}
In this paper, we consider the numerical discretization of SDEs with multiplicative noise.
Our result focuses on two key aspects: (1) the $L^p$-upper bounds for derivatives of the logarithmic numerical density, (2) the sharp convergence analysis of Euler's scheme under the relative entropy.
In the first part, we present estimates for the first and second order derivatives of the logarithmic numerical density. mainly using the Malliavin calculus to derive expressions of the derivatives of the logarithmic Green's function and to obtain an estimate for the inverse Malliavin matrix.
In the second part, in terms of the relative entropy error, we obtain a bound that has a second-order accuracy in time step, which then naturally leads to first-order error bounds under the total variation distance and Wasserstein distances. 
\tcb{We finally discuss here possible extensions of our analysis technique to other numerical schemes for SDEs. Currently, our proof framework cannot be applied to the implicit Euler method, but we think it is possible to obtain similar results by finding some suitable continuous time interpolation.
Other} possible improvements regarding the current results include: extending the current convergence analysis to a uniform-in-time one, and relaxing the noise to the degenerate case (for instance, the underdamped Langevin equation). We leave these as possible future work.

\section*{Acknowlegement}

This work was financially supported by the National Key R\&D Program of China, Project Number 2021YFA1002800.
The work of L. Li was partially supported by NSFC 12371400 and 12031013,  Shanghai Municipal Science and Technology Major Project 2021SHZDZX0102, and Shanghai Science and Technology Commission (Grant No. 21JC1403700, 21JC1402900).
M. Wang's work was partially supported by the Postdoctoral Fellowship Program of CPSF under Grant Number GZC20241020. \tcb{The authors would like to thank the editors and the anonymous reviewers for helpful comments and suggestions.}

\appendix

\section{Pointwise estimate for 1D time continuous SDE}\label{app:pointwise}

In this section, we provide a pointwise estimate for $R$ and $R_2$ defined in \eqref{eq:firstordercommn} and
\eqref{eq:secondordercommn}. This estimate can naturally give the well-known Li-Yau type pointwise estimate for the solution to Fokker-Planck equation, which can be proved by some classical PDE methods as well (e.g. the Bernstein's method). We will also point out why the argument fails for numerical solutions, which led us to use the integrated version as in the main part of the paper. 

Similarly, the process to consider for the Green's function of the SDE is given by
\begin{gather}
Y_t=y+\int_a^t b(s, Y_s)\,ds+\int_a^t \sigma(s, Y_s)\,dW_s,
\end{gather}
where $a$ now is an arbitrary point in $[0, T)$ and $a\le t\le T$. Here, $b(\cdot, \cdot): \R_+\times \R\to \R$
and $\sigma(\cdot,\cdot): \R_+\times \R\to \R$ and $W_s$ is a 1D standard Brownian motion. 
\begin{remark}
For $d=1$, the more general case is given by $\sigma(s, y)\in \R^{m\times 1}$ and $W$ is $m$-dimensional Brownian motion.
Studying the law for this general case is in fact equivalent to the case here since $\sum_i \lambda_i z_i\sim \sqrt{\sum_i \lambda_i^2}\tilde{z}$
for independent Gaussian variables $z_i$, and $\tilde{z}$ is also a Gaussian variable.
\end{remark}

For a differentiable function $ f $, we denote
$$
f'_{1} ( t,x) := \frac{\partial f }{\partial t} ( t,x),
\quad
f'_{2} ( t,x) := \frac{\partial f }{\partial x} ( t,x).
$$

Instead of polynomial growth, here we require a bit stronger conditions. Note that we also require the boundedness of the drift (for $\alpha = 0$ below).

\begin{assumption}\label{ass:condition-drift-diffusion2}

\begin{enumerate}[(a)]
\item The drift coefficient $ b \in C^3
( \R_+ \times \mathbb{R}^{d} ; \mathbb{R}^d)$ satisfies for any multi-index $\alpha$ with $0\le |\alpha|\le 3$ that
\begin{gather}
\sup_{t,x}| \partial_x^{\alpha} b(t,x)| <\infty.
\end{gather}

\item The diffusion coefficient $\sigma\in C^3(\R_+\times \R^d; \R^{d\times m})$ satisfies for any multi-index $\alpha$ with $0\le |\alpha|\le 3$ that
\begin{gather}
\sup_{t,x}|\partial_x^{\alpha}\sigma(t, x)|<\infty,\quad \sup_{t,x}|\partial_t\sigma(t, x)|<\infty.
\end{gather}
\end{enumerate}
\end{assumption}

\begin{proposition}\label{pro:greenfirstorder1}
Suppose Assumption \ref{ass:condition-drift-diffusion2} holds. Consider \eqref{eq:firstordercommn} and \eqref{eq:secondordercommn}, which is written specifically for 1D SDE case as
\begin{gather}
\begin{split}
& R(t, x, a, y) = \frac{\partial}{\partial x} \log p(t, x, a, y) +\frac{\partial}{\partial y} \log p(t, x, a, y),\\
& R_2(t, x, a, y) = \left(\frac{\partial}{\partial x}+\frac{\partial}{\partial y}\right)^2 \log p(t, x, a, y).
\end{split}
\end{gather} 
Then we have
\begin{equation}
| R(t,x,a,y) | \leq C \left( 1 + \frac{|x - y |^2}{t-a} \right), \quad |R_2(t, x, a, y) | \leq C \left( 1 + \frac{ | x - y |^4 }{ (t-a)^2 } \right).
\end{equation}
\end{proposition}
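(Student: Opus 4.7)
The plan is to combine the Malliavin calculus representation from Section \ref{sec:nablalog} with classical Aronson-type two-sided Gaussian bounds on the 1D transition density. First, I adapt the continuous analogue of Lemma \ref{lmm:greenexpression} to the 1D setting: with the stochastic Jacobian $J_t = \partial_y Y_t$, the Malliavin covariance $G_t = \int_a^t (D_r Y_t)^2\,dr$, and the scalar covering field $u_r = D_r Y_t / G_t$, one obtains
$$R(t,x,a,y) = \E[\delta(u(J_t-1)) \mid Y_t = x]$$
and an analogous second-order expression for $R_2$ involving iterated divergences and the stochastic Hessian $H_t = \partial_y^2 Y_t$. The continuous-time analogues of Theorem \ref{thm:Malliavin-matrix} and Proposition \ref{eq:otherMalliavin}, which are in fact classical under Assumption \ref{ass:condition-drift-diffusion2}, give $\E|G_t^{-1}|^p \lesssim (t-a)^{-p}$, $\E|J_t-1|^p \lesssim (t-a)^{p/2}$, $\E|H_t|^p \lesssim (t-a)^{p/2}$, and uniform $L^p$ bounds on $D_s J_t$, $D_r D_s J_t$, $D_s H_t$, etc.

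Second, to turn the conditional expectation into a pointwise bound, I use the identity $p(t,x,a,y)\,R(t,x,a,y) = (\partial_x + \partial_y)\,p(t,x,a,y)$ together with Malliavin integration by parts applied to a mollified delta: for any smooth approximation $k_\varepsilon$ to $\delta_x$,
$$\E[(J_t-1)\,k_\varepsilon'(Y_t - x)] = \E[\delta(u(J_t-1))\,k_\varepsilon(Y_t - x)].$$
Estimating the left-hand side via Cauchy--Schwarz with Gaussian-weighted norms — where $J_t - 1$ contributes $(t-a)^{1/2}$, the factor $k_\varepsilon'$ contributes $(t-a)^{-1}$ against the heat kernel, and repeated use of $z\,e^{-z^2/2} = -\partial_z e^{-z^2/2}$ produces the polynomial in $|x-y|^2/(t-a)$ — yields
$$\bigl|(\partial_x + \partial_y)\,p(t,x,a,y)\bigr| \leq C(t-a)^{-1/2}\Bigl(1 + \tfrac{|x-y|^2}{t-a}\Bigr)\,e^{-c|x-y|^2/(t-a)}.$$
Dividing by the classical lower bound $p(t,x,a,y) \geq c_1 (t-a)^{-1/2}\,e^{-c_2|x-y|^2/(t-a)}$ gives the bound on $R$. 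The estimate for $R_2$ follows analogously from the second-order Malliavin formula; each additional commuted $(\partial_x + \partial_y)$ costs one further power of $(t-a)^{1/2}$ via $J_t - 1$ or $H_t$, while the Gaussian tails contribute one extra $|x-y|^2/(t-a)$ factor.

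The main obstacle is passing from moment bounds to genuinely pointwise bounds: one must track how the conditioning $\{Y_t = x\}$ interacts with the Gaussian tails, so that the $(x-y)^2/(t-a)$ factors emerge with the correct exponential weights rather than merely in an $L^p$-averaged sense. In the 1D continuous setting this is possible because both the density and all its relevant derivatives admit clean Aronson-type pointwise bounds via Girsanov arguments (or equivalently via the McKean--Singer parametrix), and the continuous SDE preserves an exact infinitesimal symmetry that makes $(\partial_x + \partial_y)$ act as a genuinely lower-order operator. This strategy fails for the Euler--Maruyama scheme, because freezing the coefficients $b(t_k, Y^h_{t_k})$, $\sigma(t_k, Y^h_{t_k})$ over each step destroys the delicate cancellation between $\partial_x$ and $\partial_y$ derivatives at the scale of a single time step: the per-step remainders are of order $h$ rather than vanishing infinitesimally, and when pushed through a Gaussian tail estimate they accumulate in a way that cannot be absorbed — this is precisely what forces us to work with the integrated formulation in the main body of the paper.
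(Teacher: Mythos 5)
Your starting point (the Malliavin representation $R=\E[\delta(u(J_t-1))\mid Y_t=x]$ together with the scalings $\E|G_t^{-1}|^p\lesssim (t-a)^{-p}$, $\E|J_t-1|^p\lesssim (t-a)^{p/2}$) agrees with the paper's, but the step you use to convert these moment bounds into a pointwise bound does not go through. You propose to establish a Gaussian upper bound $|(\partial_x+\partial_y)p|\le C(t-a)^{-1/2}\big(1+\tfrac{|x-y|^2}{t-a}\big)e^{-c|x-y|^2/(t-a)}$ and then divide by the Aronson lower bound $p\ge c_1(t-a)^{-1/2}e^{-c_2|x-y|^2/(t-a)}$. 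For variable coefficients the exponential constants in these two estimates do not match: any upper bound obtained by Cauchy--Schwarz against Gaussian-weighted norms (or by a parametrix) comes with some $c<c_2$, so the quotient carries a factor $e^{(c_2-c)|x-y|^2/(t-a)}$, which is not dominated by $1+|x-y|^2/(t-a)$. This is precisely why Li--Yau type bounds on $\nabla\log p$ cannot be read off from two-sided Aronson estimates and require either a maximum-principle (Bernstein) argument or, as in the paper, a representation in which the density has already been divided out.

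The paper avoids any division by $p$ by estimating the conditional expectation directly, and the two ingredients you are missing are exactly what make that possible. First, the covering field is built from the adapted projection $v_s=\E[D_sY_t\mid\hat{\mathscr{F}}_s]$ rather than from $DY_t$; by the Clark--Ocone formula $\delta(v)=Y_t-\E Y_t$, which on the event $\{Y_t=x\}$ is the explicit quantity $x-\E Y_t=O(|x-y|+(t-a))$. Second, in the 1D continuous setting one has the \emph{almost sure} two-sided bound $\alpha(T)(t-a)\le\langle DY_t,v\rangle_H\le\beta(T)(t-a)$, because $D_rY_t=\sigma(t,Y_t)\exp\big(\int_r^t\lambda(s,Y_s)\,ds\big)$ is uniformly bounded away from $0$ and $\infty$ under Assumption \ref{ass:condition-drift-diffusion2} and ellipticity; likewise $J_t-1=\sigma(a,y)^{-1}\big(\sigma(t,Y_t)e^{\int_a^t\lambda\,ds}-\sigma(a,y)\big)=O(|x-y|+(t-a))$ pointwise once $Y_t=x$. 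Multiplying these pointwise bounds inside $\E[\,\cdot\mid Y_t=x]$ gives $|R|\le C\big(1+\tfrac{|x-y|^2}{t-a}\big)$ with no density ratio appearing anywhere. With your non-adapted choice $u=DY_t/G_t$, the divergence $\delta(DY_t)$ is a genuine Skorohod integral admitting only $L^p$ control, which forces you back into the averaged framework where the mismatch of Gaussian constants is fatal. (Your closing diagnosis of why the argument fails for Euler--Maruyama also differs from the paper's: the obstruction identified there is that $D_rY_t^h$ is not almost surely bounded below, so the deterministic bound on the Malliavin covariance is lost.)
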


Before the proof, we note some formulas for the stochastic Jacobian
\begin{gather}
J_t=\frac{\partial Y_t}{\partial y}
\end{gather}
and the Malliavin derivative $DY_t$.
\begin{lemma}
Consider the so-called feedback rate 
\begin{gather}
\lambda(t, x)=b_2'-\frac{\sigma_2'}{\sigma}b-\frac{\sigma_1'}{\sigma}-\frac{1}{2}\sigma_2'' \sigma.
\end{gather}
The Jacobian $J$ is given by
\begin{gather}
J_t=\frac{\sigma(t, Y_t)}{\sigma(a, y)}\exp\left(\int_a^t \lambda(s, Y_s)\,ds\right).
\end{gather}
and for $r\in [a, T]$, one has
\begin{gather}
D_rY_t=
\begin{cases}
0 & t<r \\
\sigma(t, Y_t)\exp(\int_r^t\lambda(s, Y_s))\,ds & t\ge r.
\end{cases}
\end{gather}
\end{lemma}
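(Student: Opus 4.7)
The plan is to establish both formulas by recognizing that $J_t$ and $D_r Y_t$ satisfy the \emph{same} linear SDE (with different initial data), solve that SDE via the stochastic exponential, and then use It\^o's formula to convert the resulting stochastic integral into a deterministic-in-form integral that matches the definition of $\lambda$.

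First, I would derive the equation for $J_t$. Differentiating the SDE for $Y_t$ formally with respect to the initial value $y$ (this can be justified under Assumption \ref{ass:condition-drift-diffusion2}) gives
\begin{equation*}
dJ_t = b_2'(t,Y_t)\, J_t\, dt + \sigma_2'(t,Y_t)\, J_t\, dW_t, \qquad J_a = 1.
\end{equation*}
This is a geometric linear SDE, so the stochastic exponential yields
\begin{equation*}
J_t = \exp\!\left( \int_a^t \Big[b_2'(s,Y_s) - \tfrac{1}{2}(\sigma_2'(s,Y_s))^2\Big]\, ds + \int_a^t \sigma_2'(s,Y_s)\, dW_s \right).
\end{equation*}

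Next, I would eliminate the It\^o integral $\int_a^t \sigma_2'\, dW_s$ by applying It\^o's formula to $\log \sigma(t,Y_t)$. A direct computation gives
\begin{equation*}
d\log \sigma(t,Y_t) = \left[\frac{\sigma_1'}{\sigma} + \frac{\sigma_2'}{\sigma} b + \tfrac{1}{2}\sigma_2''\, \sigma - \tfrac{1}{2}(\sigma_2')^2 \right]\, dt + \sigma_2'(t,Y_t)\, dW_t,
\end{equation*}
so that $\int_a^t \sigma_2'\, dW_s = \log\!\big(\sigma(t,Y_t)/\sigma(a,y)\big) - \int_a^t \big[\tfrac{\sigma_1'}{\sigma} + \tfrac{\sigma_2'}{\sigma}b + \tfrac{1}{2}\sigma_2''\sigma - \tfrac{1}{2}(\sigma_2')^2\big]\, ds$. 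Substituting into the stochastic exponential, the $(\sigma_2')^2/2$ contributions cancel exactly, and the remaining drift coefficient collapses to $\lambda = b_2' - \tfrac{\sigma_2'}{\sigma}b - \tfrac{\sigma_1'}{\sigma} - \tfrac{1}{2}\sigma_2''\sigma$, yielding the claimed formula $J_t = \frac{\sigma(t,Y_t)}{\sigma(a,y)}\exp\!\big(\int_a^t \lambda(s,Y_s)\, ds\big)$.

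For $D_r Y_t$, I would recall that the Malliavin derivative of a diffusion satisfies
\begin{equation*}
D_r Y_t = \sigma(r,Y_r)\, \mathbf{1}_{r \le t} + \int_r^t b_2'(s,Y_s)\, D_r Y_s\, ds + \int_r^t \sigma_2'(s,Y_s)\, D_r Y_s\, dW_s,
\end{equation*}
which is precisely the linear SDE above but with initial value $\sigma(r,Y_r)$ at time $r$ instead of $1$ at time $a$. Repeating the argument (stochastic exponential plus It\^o's formula on $\log\sigma$ between $r$ and $t$) gives $D_r Y_t = \sigma(r,Y_r) \cdot \tfrac{\sigma(t,Y_t)}{\sigma(r,Y_r)}\exp\!\big(\int_r^t \lambda(s,Y_s)\, ds\big) = \sigma(t,Y_t)\exp\!\big(\int_r^t \lambda(s,Y_s)\, ds\big)$, and the factor $\sigma(r,Y_r)$ cancels cleanly.

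I do not anticipate a serious obstacle here: the only delicate point is the bookkeeping in the It\^o expansion of $\log\sigma$, in particular checking that the Stratonovich-type correction $\tfrac{1}{2}(\sigma_2')^2$ cancels against the Girsanov correction in the stochastic exponential so that $\lambda$ appears with the stated coefficients. Assumption \ref{ass:condition-drift-diffusion2} (in particular $\sigma$ being bounded away from $0$ is implicit, since $\log\sigma$ is used; strictly speaking one needs the nondegeneracy of $\sigma$ to make the formula meaningful, which is consistent with the uniform ellipticity used elsewhere in the paper) justifies all the differentiations and applications of It\^o's formula.
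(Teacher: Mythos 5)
Your proposal is correct and follows essentially the same route as the paper: both arguments rest on the linear SDEs $dJ_t=b_2'J_t\,dt+\sigma_2'J_t\,dW_t$ (resp.\ the analogous equation for $D_rY_t$ started from $\sigma(r,Y_r)$ at time $r$) and on the observation that the martingale part $\int\sigma_2'\,dW$ can be absorbed into $\log\sigma(t,Y_t)$ via It\^o's formula, the paper phrasing this as the ansatz $J_t=C\sigma(t,Y_t)e^{z(t)}$ while you phrase it as the Dol\'eans-Dade exponential followed by substitution. Your bookkeeping of the $\tfrac{1}{2}(\sigma_2')^2$ cancellation and your remark that nondegeneracy of $\sigma$ is needed for $\log\sigma$ are both accurate.
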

Note that the stochastic Jacobian satisfies
\begin{gather}
dJ=b_2' J dt+\sigma_2' J dW,\quad J_a=1.
\end{gather}
The idea is to use the equation for $dY$ to kill the It\^o's integral term.  Since $d\sigma(t, Y_t)$ would give a term like $\sigma_2' \sigma dW$, then it is natural to expect that $J_t=C\sigma(t, Y_t)\exp(z(t))$ where $z$ has zero quadratic variation. Inserting this into the equation for $J$, one can obtain the result for $J_t$.
The Malliavin derivative $D_rY_t$ is similar. In fact, for $t\ge r$, the equation is given by
\begin{gather}
D_rY_t=\sigma(r, Y_r)+\int_r^t b_2'(s, Y_s) D_rY_s\,ds
+\int_r^t \sigma_2'(s, Y_s) D_rY_s\,dW_s.
\end{gather}

\begin{proof}[Proof of Proposition \ref{pro:greenfirstorder1}]
Different from the numerical density, we choose the covering field to be
\begin{gather}
u=\frac{v}{\langle D Y_t, v \rangle_H},\quad v_s=\mathbb{E}\left[D_s Y_t | \hat{\mathscr{F}}_s\right].
\end{gather}
 The benefit is that $v_s$ is adapted and Clark-Ocone formula \cite{nualart2018introduction} gives
\[
\delta(v)=\int_a^t v_s\,dW_s=Y_t-\E Y_t.
\]
The main benefit in the time-continuous case is that $D_rY_t$ is bounded below by a positive number uniformly almost surely
for $r\le t$. Hence, 
\begin{gather}\label{eq:Mmatrixpointwise}
\alpha(T) (t-a)\le  \langle DY_t, v\rangle_H \le \beta(T)(t-a),
 \end{gather}
 for some deterministic $\alpha(T)>0$ and $\beta(T)>0$.

By the expression, one has
\begin{gather}
R(t, x, a, y)=\mathbb{E} ( \delta ( u ( J_t - 1 ) ) | Y_t = x )
\end{gather}
Note that
\[
\begin{split}
\delta ( u ( J_t - 1 ) )&=\delta(u)(J_t-1)+\langle u, DJ_t\rangle\\
&=\frac{Y_t-\E Y_t}{ \langle DY_t, v\rangle_H}(J_t-1)
-\left\langle v, \frac{D \langle DY_t, v\rangle_H}{\langle DY_t, v\rangle_H^2}\right\rangle_H(J_t-1)
+\langle u, DJ_t\rangle.
\end{split}
\]
Now, we use the explicit formulas for $J_t$ and $DY_t$, which is beneficial since $Y_t$ in these formulas can be replaced by $x$, conditioning on $Y_t=x$. The first term, which is the main term is reduced to
\begin{gather*}
\E\left[\frac{Y_t-\E Y_t}{ \langle DY_t, v\rangle_H}(J_t-1)| Y_t=x \right]=(x-\E Y_t)\E\left[\frac{\sigma(t, x)\exp(\int_a^t\lambda(s, Y_s)\,ds)-\sigma(a, y)}{\sigma(a, y) \langle DY_t, v\rangle_H}| Y_t=x\right]
\end{gather*}
Using the assumptions, it is not hard to find the desired pointwise bound.
Other terms are relatively straightforward, though more tedious.

For $R_2$, we have by Lemma \ref{lmm:greenexpression} that
\begin{gather*}
R_2(t, x, a, y)
= \mathbb{E} \Big[  \delta\big( u \delta ( u  (\partial_y \hat{X}_t-1)^2  ) \big) + \delta( u \partial_y^2\hat{X}_t )   | \hat{X}_t = x   \Big]
-R^2(t, x, a, y).
\end{gather*}
The estimate is similar (we need pointwise estimate of more terms, like $|DD Y_t|\le C$, etc, but these are straightforward) and we skip the details.
\end{proof}

Using the pointwise estimates of $R$ and $R_2$, we can get the following pointwise estimate of the logarithmic density (which is the well-known Li-Yau type pointwise estimate).
\begin{proposition}\label{lem:one-derive-tail}
	Suppose the coefficients satisfy assumption \ref{ass:condition-drift-diffusion} and the initial density $\rho_0(x)$ satisfies the that
	\[
	C_1\exp(-\alpha_1 |x|^2)\le \rho_0(x)\le C_2\exp(-\alpha_2 |x|^2)
	\] 
	and
	\[
	\left|\frac{\partial}{\partial x}\log\rho_0(x)\right|\le C(1+|x|^2), \quad \left|\frac{\partial^2}{\partial x^2} \log\rho_0(x)\right|\le C(1+|x|^4),
	\]
	then the density $\rho_t(x)$ for $X_t$, which is the density of the SDE satisfies
\begin{equation}\label{eq:rho}
	C_1(T)\exp(-r_3|x|^2)\le \rho_t(x)\le C_2(T) \exp(-r_4|x|^2),
\end{equation}
and
\begin{equation}\label{eq:log-rho}
	\left|\frac{\partial}{\partial x} \log \rho_t(x)\right|\le C(T)(1+|x|^2), \quad \left|\frac{\partial^{2}}{\partial x^2} \log\rho_t(x)\right| \le C(T)(1+|x|^4).
\end{equation}
\end{proposition}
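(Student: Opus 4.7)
The plan is to exploit the representation $\rho_t(x)=\int_{\R} p(t,x;0,y)\rho_0(y)\,dy$ and combine it with the pointwise bounds on $R$ and $R_2$ from Proposition \ref{pro:greenfirstorder1}, together with standard Aronson-type two-sided Gaussian estimates for the transition density $p(t,x;0,y)$, which are available under the uniform ellipticity and bounded-coefficient hypotheses of Assumption \ref{ass:condition-drift-diffusion2}.

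First, I would derive the two-sided Gaussian bound \eqref{eq:rho} for $\rho_t$. Aronson's inequality provides
\[
c_1 t^{-1/2}\exp\!\bigl(-c_2(x-y)^2/t\bigr)\le p(t,x;0,y)\le c_3 t^{-1/2}\exp\!\bigl(-c_4(x-y)^2/t\bigr).
\]
Inserting this into $\rho_t(x)=\int p(t,x;0,y)\rho_0(y)\,dy$ together with the Gaussian bounds assumed on $\rho_0$ and completing the square in $y$ inside the resulting Gaussian-times-Gaussian integral immediately produces matching upper and lower Gaussian bounds on $\rho_t(x)$ of the required form, with rates that stay positive and finite for all $t\in[0,T]$.

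Next, for the first-order estimate in \eqref{eq:log-rho}, I would use the identity
\[
\partial_x\log\rho_t(x)=\int_{\R}\bigl(R(t,x,0,y)+\partial_y\log\rho_0(y)\bigr)\mu_x(dy),
\]
derived exactly as in the proof of Theorem \ref{thm:derive-density}, where $\mu_x(dy)=p(t,x;0,y)\rho_0(y)/\rho_t(x)\,dy$ is the law of $Y_0$ conditioned on $Y_t=x$. Proposition \ref{pro:greenfirstorder1} yields $|R(t,x,0,y)|\le C\bigl(1+(x-y)^2/t\bigr)$ and the hypothesis gives $|\partial_y\log\rho_0(y)|\le C(1+y^2)$, so the task reduces to controlling the conditional moments $\E\bigl[(x-Y_0)^2/t\mid Y_t=x\bigr]$ and $\E[Y_0^2\mid Y_t=x]$. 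Using the Aronson upper bound in the numerator of $\mu_x$ and the Gaussian lower bound from Step~1 in the denominator, completion of squares shows that $\mu_x$ is dominated by a Gaussian with mean of order $|x|/(1+ct)$ and variance of order $t$; hence $\E[(x-Y_0)^2/t\mid Y_t=x]\le C(T)(1+x^2)$, and consequently $|\partial_x\log\rho_t(x)|\le C(T)(1+x^2)$. The second-order bound is proved in the same way, starting from the representation (also obtained as in the proof of Theorem \ref{thm:derive-density})
\[
\partial_x^2\log\rho_t=\int_{\R}\bigl(R_2+R^2+2R\,\partial_y\log\rho_0+\partial_y^2\log\rho_0+(\partial_y\log\rho_0)^2\bigr)\mu_x(dy)-(\partial_x\log\rho_t)^2,
\]
combined with the bound $|R_2|\le C\bigl(1+(x-y)^4/t^2\bigr)$ from Proposition \ref{pro:greenfirstorder1} and the assumption $|\partial_y^2\log\rho_0(y)|\le C(1+y^4)$; the required conditional fourth moment is again of order $C(T)(1+x^4)$ by the same Gaussian computation.

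The main obstacle will be controlling these conditional moments uniformly for $t\in(0,T]$: the singular factors $1/t$ and $1/t^2$ hidden in the bounds for $R$ and $R_2$ must be exactly absorbed by the Gaussian concentration of $\mu_x$ at spatial scale $\sqrt{t}$, with constants that remain finite as $t\to 0^+$, where $\mu_x$ degenerates to $\delta_x$. The completion of squares in the second step supplies precisely this cancellation, and it is the reason why one must invoke the two-sided Aronson bound on $p$, not merely the upper bound used to control $\rho_t$ itself.
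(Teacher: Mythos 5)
Your overall architecture matches the paper's: Aronson's two-sided Gaussian bound convolved with the Gaussian envelope of $\rho_0$ gives \eqref{eq:rho}, and the representation $\partial_x\log\rho_t(x)=\int(R+\partial_y\log\rho_0)\,d\mu_x$ (with $\mu_x$ the law of $Y_0$ conditioned on $Y_t=x$) reduces \eqref{eq:log-rho} to bounding the conditional moments $\E\big[|x-Y_0|^{2k}/t^{k}\mid Y_t=x\big]$ by $C(T)(1+|x|^{2k})$. The gap is in how you bound those moments. You claim that putting the Aronson upper bound in the numerator of $\mu_x$ and the Gaussian lower bound on $\rho_t$ in the denominator, then completing the square, dominates $\mu_x$ by a Gaussian of variance $\sim t$ with a uniform constant. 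It does not, because the exponential rates do not match: completing the square in the numerator gives $\int\frac{(x-y)^2}{t}p\,\rho_0\,dy\le C(1+x^2)\exp\big(-\tfrac{c_4\alpha_2}{c_4+\alpha_2 t}x^2\big)$, while the denominator is only bounded below by $c\exp\big(-\tfrac{c_2\alpha_1}{c_2+\alpha_1 t}x^2\big)$, where $c_2>c_4$ (the lower Aronson bound decays faster than the upper one) and $\alpha_1\ge\alpha_2$. The quotient therefore carries an uncontrolled factor $\exp(\epsilon x^2)$ with $\epsilon=\tfrac{c_2\alpha_1}{c_2+\alpha_1 t}-\tfrac{c_4\alpha_2}{c_4+\alpha_2 t}>0$, which destroys the polynomial bound. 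Your scheme only closes in the exactly Gaussian case (constant $\sigma$, linear $b$, Gaussian $\rho_0$), where upper and lower rates coincide; the "cancellation supplied by completion of squares" that you invoke in your last paragraph is precisely what fails in general.

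The paper's proof avoids any comparison of Gaussian rates. It splits $H_1=\int\frac{|x-y|^2}{t}\,p\,\rho_0\,dy$ at the adaptive radius $\bar R=A\sqrt{t}\,\max\big(1,\sqrt{|\log\rho_t(x)|}\big)$. On $\{|x-y|\le\bar R\}$ the weight is at most $\bar R^2/t\le A^2(1+|\log\rho_t(x)|)\le C(1+x^2)$ and the remaining integral of $p\,\rho_0$ is $\rho_t(x)$ itself, so no lower bound on the denominator is ever used there; on $\{|x-y|\ge\bar R\}$ one peels off $e^{-c|x-y|^2/(4t)}\le e^{-cA^2|\log\rho_t(x)|/4}=\rho_t(x)^{cA^2/4}$, and the choice $A=\sqrt{4/c}$ makes the tail contribution exactly $C\rho_t(x)$, again cancelling the denominator on the nose. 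You need this adaptive-cutoff (equivalently, a layer-cake/Chebyshev argument applied to the probability measure $\mu_x$) to control the conditional second moment, and the same device with $\bar R^4/t^2$ handles the fourth moment required for $R_2$. The rest of your proposal (Step 1, the two representations, and the use of Proposition \ref{pro:greenfirstorder1}) is consistent with the paper.
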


\begin{proof}
The Green's function (transition probability) satisfies (see, for example, \cite{sheu1991some})
\begin{gather}
 C (t-a)^{-d/2}\exp\left(-\frac{\alpha |x-y|^2}{t-a}\right) \le p(t, x, a, y)\le C' (t-a)^{-d/2}\exp\left(-\frac{\alpha' |x-y|^2}{t-a}\right).
\end{gather}
Then, the Gaussian tail estimate \eqref{eq:rho} follows.

Next, we take the argument for the first-order derivative as the example. One has
\begin{equation}
\begin{split}
\tfrac{\partial}{\partial x} \log \rho_{ t }(x)&=\frac{1}{\rho_t(x)}\int_{\mathbb{R}}\tfrac{\partial}{\partial x}  p(t, x, 0, y) \rho_0(y)  d y\\
&=\frac{1}{\rho_t(x)}\int_{\mathbb{R}}(\partial_y\log\rho_0(y)+R(t,x,0,y)) p_t(x|y) \rho_0(y)  d y.
\end{split}
\end{equation}

	Since 
	\[
	|\log \rho_0(y)|\le C(1+|y|^2)
	\le C(1+|x|^2)+C|x-y|^2,
	\]
	we find that
	\[
	|\partial_y\log\rho_0(y)+R(t,x,0,y)|
	\le C(1+|x|^2)+C\frac{|x-y|^2}{t}.
	\]
	Hence, it suffices to estimate
	\[
	H_1:=\int_{\mathbb{R}}\frac{|x-y|^2}{t}p_t(x|y) \rho_0(y)  d y.
	\]
	Let $\bar{R}:= A \sqrt{t} \max \left( 1 ,  \sqrt{\left|\log \rho_t(x)\right|}\right)$. One then has
	\begin{equation}
		\begin{split}
			\int_{ |x - y | \leq \bar{R} }p_t(x|y)  \frac{|x - y |^{2}}{t}\rho_0(y)  d y
			\leq \frac{\bar{R}^2}{t}\int_{ \mathbb{R} }p_t(x|y)\rho_0(y)  d y
			\le C(1+|x|^2)\rho_t(x),
		\end{split}
	\end{equation}
	where we used $|\log\rho_t(x)|\le C(1+|x|^2)$.
	On the other hand,
	\begin{equation}
		\begin{split}
			\int_{ |x - y | \geq \bar{R}}
			\cdots  d y
			\leq &C\int_{ |x - y | \geq \bar{R} }
			t^{-\frac{1}{2}} 
			e^{-\frac{ c | x - y |^2}{t}}
			\frac{|x - y |^2}{t}\rho_0(y)  d y\\
			\leq &  C \int_{ |x - y | \geq \bar{R} }
			t^{-\frac{1}{2}} 
			e^{-\frac{ c | x - y |^2}{2t}}
			e^{-\frac{ c A^2 t | \log \rho_t(x)| }{4t}}
			\left(e^{-\frac{ c | x - y |^2}{4t}}\frac{|x-y|^2}{t}\right)\rho_0(y)  d y\\ 
			\leq &
			C e^{-\frac{ c A^2  | \log \rho_t(x)| }{4}}\int_{ \mathbb{R} }
			t^{-\frac{1}{2}} 
			e^{-\frac{ c | x - y |^2}{2t}} \rho_0(y)  d y\leq C  \rho_t(x),
		\end{split}
	\end{equation}
	where we have chosen $A = \sqrt{\frac{4}{c}}$. Hence,
	\[
	\frac{H_1}{\rho_t(x)}\le C(1+|x|^2),
	\]
	giving the desired control. Hence, the estimate for $|\partial_x\log\rho_t|$ follows.

	The second-order derivative is pretty much similar and we omit the details.
	\end{proof}

Lastly, let us conclude this appendix with a remark.
The pointwise estimate here heavily relies on \eqref{eq:Mmatrixpointwise}.
For numerical density, one can see clearly that $D_r Y_t^h$ is not almost surely bounded below by a positive constant, which makes the argument here fail. Alternatively,  we choose $v=DY_t^h$ instead of $\E(D_sY_t^h | \hat{\mathscr{F}}_s)$.
Proving the pointwise estimate of the Malliavin matrix \eqref{eq:discreteMmatrix} directly is also difficult, and this is why we choose the integrated version eventually.
Besides, applying the PDE approach (like the Bernstein method) to the numerical density seems also challenging as
we do not have estimates for $\hat{b}_t$ and $\hat{\Lambda}_t$ in Lemma \ref{lem:Fokker-eq}.

\bibliographystyle{plain}

\bibliography{refer}

\end{document}